\numberwithin{equation}{section}
\theoremstyle{plain}
\newtheorem{lemma}{Lemma}[section]
\newtheorem{proposition}{Proposition}[section]
\newtheorem{theorem}{Theorem}[section]
\newtheorem{corollary}{Corollary}[section]
\newcommand{\p} {\mathbb{P}}
\newcommand{\E} {\mathbb{E}}
\newcommand{\R} {\E r}
\DeclareMathOperator*{\argmin}{arg\,min}
\DeclareMathOperator*{\argmax}{arg\,max}
\let\oldenumerate\enumerate
\renewcommand{\enumerate}{
  \oldenumerate
  \setlength{\itemsep}{1pt}
  \setlength{\parskip}{0pt}
  \setlength{\parsep}{0pt}
}
\newcommand{\Bin} {\text{Bin}} 
\newcommand{\Ber} {\text{Ber}} 
\newcommand{\nb}[1]{}
\begin{document}
\begin{frontmatter}
\title{Minimax Rates of Community Detection in \\Stochastic Block Models}
\runtitle{Minimax Rates of Community Detection in SBM}

\begin{aug}
\author{\fnms{Anderson Y.} \snm{Zhang}\ead[label=e1]{ye.zhang@yale.edu}},
\and
\author{\fnms{Harrison H.} \snm{Zhou}\ead[label=e2]{huibin.zhou@yale.edu}\ead[label=u1,url]{http://www.stat.yale.edu/\textasciitilde hz68/}}
\runauthor{Zhang, Zhou}
\affiliation{Yale University}
\address{Department of Statistics\\
Yale University\\
New Haven, CT 06511 \\
\printead{e1}\\
\printead{e2}\\
\printead{u1}}
\end{aug}

\begin{abstract}
Recently network analysis has gained more and more attentions in statistics, as well as in computer science, probability, and applied mathematics. Community detection for the stochastic block model (SBM) is probably the most studied topic in network analysis. Many methodologies have been proposed. Some beautiful and significant phase transition results are obtained in various settings. In this paper, we provide a general minimax theory for community detection. It gives minimax rates of the mis-match ratio for a wide rage of settings including homogeneous and inhomogeneous SBMs, dense and sparse networks, finite and growing number of communities. The minimax rates are exponential, different from polynomial rates we often see in statistical literature. An immediate consequence of the result is to establish threshold phenomenon for strong consistency (exact recovery) as well as weak consistency (partial recovery). We obtain the upper bound by a range of penalized likelihood-type approaches. The lower bound is achieved by a novel reduction from a global mis-match ratio to a local clustering problem for one node through an exchangeability property.
\end{abstract}

\begin{keyword}[class=MSC]
\kwd[Primary ]{60G05}
\end{keyword}

\begin{keyword}
\kwd{network}
\kwd{community detection}
\kwd{stochastic block model}
\kwd{minimax rate}
\end{keyword}

\end{frontmatter}

\nb{The table of contents is temporarily included.}

\section{Introduction}
Network science \citep{easleynetworks, newman2010networks, van2006performance, lovasz2012large} has become one of the most active research areas over the past few years. It has applications in many disciplines, for example, physics \citep{newman2003structure}, sociology \citep{wasserman1994social}, biology \citep{barabasi2004network}, and Internet \citep{albert1999internet}. Detecting and identifying communities is fundamentally important to understand the underlying structure of the network \citep{girvan2002community}. Many models and methodologies have been proposed for community detection from different perspectives, including RatioCut\citep{hagen1992new}, Ncut \citep{shi2000normalized}, and spectral method \citep{mcsherry2001spectral, rohe2011spectral, lei2013consistency} from computer science, Newman\textendash Girvan Modularity \citep{girvan2002community} from physics, semi-definite programming \citep{cai2014robust, hajek2015achieving} from engineering, and maximum likelihood estimation \citep{amini2013pseudo, bickel2009nonparametric} from statistics.

Deep theoretical developments have been actively pursued as well. Recently, celebrated works of Mossel et al. \citep{mossel2012stochastic, mossel2013proof} and Massoulie \citep{massoulie2014community} considered balanced two-community sparse networks, and discovered the threshold phenomenon for both weak and strong consistency of community detection. Further extensions to slowly growing number of communities have been made in \citep{hajek2015achieving, mossel2014consistency, chen2014statistical, abbe2015community}. Recently in statistical literature, theoretical properties of various methods had been investigated as well in \citep{chen2014statistical, zhao2012consistency, bickel2013asymptotic, chin2015stochastic, rohe2011spectral, lei2013consistency}, usually under weaker conditions and better suited for real data applications, but the convergence rates may often be sub-optimal.

Despite recent active and significant developments in network analysis, assumptions and conclusions can be very different in different papers. There is not an integrated framework on optimal community detection. In this paper, we attempt to give a fundamental and unified understanding of the community detection problem for the Stochastic Block Model (SBM). Our framework is quite general, including homogeneous and inhomogeneous SBMs, dense and sparse networks, equal and non-equal community sizes, and finite and growing number of communities. For example, the connection probability can be as small as an order of $1/n$, or as large  as a constant order, and the total number of communities can be as large as $n/\log n$. Under this framework, a sharp minimax result is obtained with an exponential rate. This result gives a clear and smooth transition from weak consistency (partial recovery) to strong consistency (exact recovery), i.e., clustering error rates from $o(1)$ to $o(n^{-1})$. As a consequence, we obtain phase transitions for non-consistency and strong consistency, under various settings, which recover the tight thresholds for phase transition in \citep{mossel2012stochastic, mossel2013proof, mossel2014consistency, chen2014statistical}.

The Stochastic Block Model, proposed by \citep{holland1983stochastic}, is possibly the most studied model in community detection \citep{bickel2009nonparametric, rohe2011spectral, lei2013consistency}. Consider an undirected network with totally $n$ nodes, and $K$ communities labeled as $\{1,2\ldots,K\}$. Each node is assigned to one community. Denote $\sigma$ to be an assignment, and $\sigma(i)$ is the community assignment for the $i$-th node. Let $n_k=|\{i:\sigma(i)=k\}|$ be the size of the $k$-th community, for each $k\in\{1,2,\ldots, K\}$. We observe the connectivity of the network, which is encoded into the adjacency matrix $\{A_{i,j}\}$ taking values in $\{0,1\}^{n\times n}$. If there exists a connection between two nodes, $A_{i,j}$ is equal to 1, and 0 otherwise. We assume each $A_{i,j}$ for any $i\geq j$ to be an independent Bernoulli random variable with a success probability $\theta_{i,j}$. Let $A_{i,i}=0$ (no self-loop) and $A_{i,j}=A_{j,i}$ (symmetry) for any $i,j$. In the SBM, $\{\theta_{i,j}\}$ is assumed to have a blockwise structure, in the sense that $\theta_{i,j}=\theta_{i',j'}$ if $i$ and $i'$ are from the same community, and so are $j$ and $j'$. We require that the within-community probabilities are larger than the between-communities probabilities, as in reality individuals from the same community are often more likely to be connected.

We consider a general SBM with parameter space defined as follows,
\begin{align*}
\Theta(n&,K,a,b,\beta)\triangleq\Bigg\{(\sigma,\{\theta_{i,j}\}):\;\sigma:[n]\rightarrow [K]^n,n_k\in\Big[\frac{n}{\beta K},\frac{\beta n}{K}\Big],\forall k\in[K],\;\{\theta_{i,j}\}\in[0,1]^{n\times n},\\
&\theta_{i,j}\geq  \frac{a}{n} \text{ if }\sigma(i)=\sigma(j)\text{ and }\theta_{i,j}\leq  \frac{b}{n} \text{ if }\sigma(i)\neq\sigma(j),\;\theta_{i,i}=0,\;\theta_{i,j}=\theta_{j,i},\;\forall i\neq j\Bigg\},
\end{align*}
where $\beta\geq 1$ and is bounded. When $\beta=1+o(1)$, all communities have almost the same size. The parameters $a/n$ and $b/n$ have straightforward interpretation, with the former one as the smallest within-community probability and the later as the largest between-community probability. Throughout the paper, we assume $\epsilon<b<a$ and $a/n<1-\epsilon$ for a small constant $\epsilon>0$, allowing the network to be from very sparse to very dense.

We use the mis-match ratio $r(\sigma,\hat\sigma)$ to measure the performance of community detection. It is the proportion of nodes mis-clustered by $\hat\sigma$ against the truth $\sigma$. The exact definition is given in Section \ref{subsec::mis_match}. The minimax rate for the parameter space $\Theta(n,K,a,b,\beta)$ in terms of the mis-match ratio loss is as follows.
\begin{theorem}\label{thm:main}
Assume $\frac{nI}{K\log K}\rightarrow\infty$, then
\begin{align}\label{eqn:main}
\inf_{\hat\sigma}\sup_{\Theta(n,K,a,b,\beta)}\mathbb{E}r(\sigma,\hat\sigma)=
\begin{cases}
\exp\big(-(1+o(1))\frac{nI}{2}\big),K=2,\\
\exp\big(-(1+o(1))\frac{nI}{\beta K}\big),K\geq 3,
\end{cases}
 \end{align}
where $1\leq\beta<\sqrt{5/3}$. In addition, if $nI/K=O(1)$, there are at least a constant proportion of nodes mis-clustered, i.e., $\inf_{\hat\sigma}\sup_{\Theta(n,K,a,b,\beta)}\E r(\sigma,\hat\sigma)\geq c$, for some fixed constant $c>0$.
\end{theorem}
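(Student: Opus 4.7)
The plan is to split the proof into three pieces: an upper bound via a penalized likelihood estimator, a matching lower bound via a single-node reduction, and the non-consistency claim when $nI/K=O(1)$ via the same two-point construction.

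\emph{Upper bound.} I would use a two-stage estimator. Stage one produces a warm start $\sigma_0$ with $r(\sigma,\sigma_0)=o(1/K)$, obtained via spectral clustering of the (regularized) adjacency matrix or an SDP relaxation; only this crude qualitative rate is needed. Stage two refines $\sigma_0$ one node at a time: reassign $\sigma_0(i)$ to the community maximizing the local log-likelihood score based on the edges incident to $i$, with plug-in estimates of $a/n$ and $b/n$ obtained from $\sigma_0$. Conditional on $\sigma_0$ being correct on all but $o(n/K)$ nodes, the gap between the score at the true community and any other community is, up to negligible perturbations, a signed sum of order $n/K$ independent Bernoulli variables, so a Chernoff bound yields a per-node mis-classification probability bounded by $\exp(-(1+o(1))\, n_{\min} I)\le\exp(-(1+o(1))\, nI/(\beta K))$. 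The hypothesis $nI/(K\log K)\to\infty$ absorbs the union bound over the $K-1$ wrong communities. Averaging over nodes yields the claimed rate for $K\ge 3$; in the $K=2$ case both communities feed into the same score simultaneously, doubling the effective sample size and producing the exponent $nI/2$.

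\emph{Lower bound.} For the matching lower bound I would use Le Cam's two-point method combined with an exchangeability argument. Pick a (nearly) balanced reference labeling $\sigma^*\in\Theta(n,K,a,b,\beta)$ and a pair of communities of joint size as small as $(1+o(1))\,2n/(\beta K)$ (or jointly covering all $n$ nodes when $K=2$). For a fixed node $i_0$ inside one of them, compare the two hypotheses that differ only in whether $i_0$ is assigned to that community or to its partner. The two likelihoods differ only through the edges incident to $i_0$, which split into two groups of order $n/(\beta K)$ independent $\Ber(a/n)$ vs.\ $\Ber(b/n)$ tests with roles swapped. The optimal Bayes testing error between these two product measures is $\exp(-(1+o(1))\,nI/(\beta K))$ for $K\ge 3$ and $\exp(-(1+o(1))\,nI/2)$ for $K=2$. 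Because every node inside the two chosen communities plays a symmetric role, the Bayes error is uniform across them, so $\E r(\sigma,\hat\sigma)$ is bounded below by a constant times this single-node error, which is exactly the claimed exponent.

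\emph{Non-consistency and main obstacle.} The $nI/K=O(1)$ claim follows from exactly the same two-point reduction: when the Chernoff exponent is bounded, the single-node Bayes testing error is at least a positive constant, and exchangeability transfers this to $\inf_{\hat\sigma}\sup\E r(\sigma,\hat\sigma)\ge c$. I expect the hardest step to be pinning down the sharp constants. On the upper bound side, the difficulty is decoupling the refinement from the warm start $\sigma_0$ (e.g., via leave-one-out or sample splitting) so that the Chernoff exponent is exact rather than merely of the right order, and absorbing the contributions of mis-labeled neighbors in $\sigma_0$ without losing the constant. On the lower bound side, the core difficulty is realizing the extremal two-community joint size $2n/(\beta K)$ while staying inside $\Theta(n,K,a,b,\beta)$ and verifying that the Bayes test between the induced product measures saturates the Chernoff exponent; the technical restriction $\beta<\sqrt{5/3}$ plausibly enters at this extremal construction.
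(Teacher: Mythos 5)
Your architecture (penalized/likelihood upper bound, single-node reduction plus exchangeability for the lower bound) matches the paper's in spirit, but there are three concrete problems. First, your upper bound is not the paper's route: the paper analyzes the \emph{global} penalized MLE $\hat\sigma=\argmax_\sigma T(\sigma)$ by a union bound over equivalence classes at each distance $m$, with a Chernoff bound $\p(T(\sigma)\geq T(\sigma_0))\leq e^{-(\alpha\wedge\gamma)I}$ and a combinatorial lemma lower-bounding $\alpha\wedge\gamma$. Your two-stage scheme (spectral/SDP warm start plus node-wise refinement) is the program of the follow-up algorithmic paper, and its key ingredient --- a warm start with error $o(1/K)$ \emph{uniformly over} $\Theta(n,K,a,b,\beta)$, including $a\asymp 1$, inhomogeneous $\theta_{i,j}$, and $K$ as large as $n/\log n$ --- is asserted without proof and is itself a substantial theorem. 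Relatedly, you place the restriction $\beta<\sqrt{5/3}$ in the lower bound's extremal construction; it actually enters only in the upper bound, through the constant $c_\beta=(5-3\beta^2)^2/(2\beta(1+3(5-3\beta^2)^2))$ in the combinatorial lemma bounding $\alpha\wedge\gamma$ for $m>n/(2K)$, a lemma your route never produces. The lower bound construction works for any bounded $\beta\geq 1$.

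Second, and more seriously, your lower bound glosses over the step that carries the main technical weight. A literal two-point test at a fixed node $i_0$ bounds only $\p(\hat\sigma \text{ errs at } i_0)$, hence gives $\E r\geq \frac{1}{n}\times(\text{testing error})$; the factor $1/n$ is absorbed into the $(1+o(1))$ only when $nI/K\gg\log n$, so this fails for $nI/K=o(\log n)$ and fails outright for the constant lower bound when $nI/K=O(1)$. To remove the $1/n$ you must show that a constant fraction of \emph{all} nodes incur the single-node error simultaneously under the loss $r(\sigma,\hat\sigma)=\inf_\delta d_H(\sigma,\delta\circ\hat\sigma)/n$, whose infimum over label permutations $\delta$ couples the nodes and prevents the naive summation of per-node errors. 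The paper's Lemma \ref{lem:global_local} does exactly this: it symmetrizes any Bayes estimator over all $n!$ node permutations to produce $\bar\sigma$ with identical local risk at every node and global Bayes risk equal to the average of the local ones, which is then combined with a uniform prior over a permutation-closed homogeneous subfamily $\Theta^L$ (not two points) and the Cram\'er--Chernoff lower bound of Lemma \ref{lem::binomial_tail}. Your phrase ``exchangeability transfers this'' names the right mechanism but omits the argument that makes the transfer compatible with the $\inf_\delta$ in the loss; without it, neither the sharp rate in the small-$nI/K$ regime nor the $nI/K=O(1)$ claim follows.
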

\noindent Note that when $K$ is finite, $nI\rightarrow \infty$ is a sufficient condition to get Equation (\ref{eqn:main}) since it is equivalent to $\frac{nI}{K\log K}\rightarrow\infty$.
Here the key quantity $I$ is defined as
\begin{align}
I=-2\log \Bigg(\sqrt{ \frac{a}{n}\frac{b}{n} }+\sqrt{1- \frac{a}{n} }\sqrt{1- \frac{b}{n} }\Bigg),
\end{align}
which is exactly $D_{1/2}(\Ber(\frac{a}{n})\|\Ber(\frac{b}{n}))$, the R\'{e}nyi divergence of order $1/2$ between two Bernoulli distributions $\Ber(\frac{a}{n})$ and $\Ber(\frac{b}{n})$. The form of $I$ is closely related to the Hellinger distance between those two Bernoulli probability measures. It is worth pointing out that $I$ is equal to $(a-b)^2/(an)$, up to a constant factor, which can be interpreted as the signal-to-noise ratio, as long as $a/n\leq 1-\epsilon$ for some $\epsilon>0$. In particular, when $a=o(n)$, $I$ is equal to $(1+o(1))(\sqrt{a}-\sqrt{b})^2/n$. 

The lower bound of (\ref{eqn:main}) is achieved by a novel reduction of the \textit{global} minimax rate into a \textit{local} testing problem. A range of new penalized likelihood-type methods are proposed for obtaining the upper bound. These ideas inspired the follow-up paper \citep{gao2015sbmalgo} to develop polynomial-time and rate-optimal algorithms.

Theorem \ref{thm:main} covers both dense and sparse networks. It holds for a wide range of possible values of $a$ and $b$, from a constant order to an order of $n$. It implies that when the connectivity probability $\frac{a}{n}$ is $O(n^{-1})$, no consistent algorithm exists for community detection. The number of communities $K$ is allowed to grow fast. It can be as large as in the order of $n/\log n$ when the connectivity probability is a constant order, in which each community contains an order of $\log n$ nodes. In addition, for finite number of communities, Theorem \ref{thm:main} shows $\frac{(a-b)^2}{a}\rightarrow\infty$ is a necessary and sufficient condition for consistent community detection, which implies consistency results in \citep{mossel2012stochastic, mossel2013proof}. It also recovers the strong consistency results in \citep{mossel2014consistency, hajek2015achieving}, in which they additionally assume $a\asymp \log n$.

The minimax rate is of an exponential form, contrast to the polynomial rates in \citep{rohe2011spectral, lei2013consistency}. The term $\frac{nI}{K}$ plays a dominating role in determining the rate. Consider the $\beta=1$ case. Rewrite $\frac{nI}{K}$ in the form of $\rho\log n$, and then approximately we fail to recover essentially $n^{1-\rho}$ nodes. When $\rho>1$, the network enjoys strong consistency property (exact recovery) since $n^{1-\rho}=o(1)$, i.e., every node is correctly clustered. While for $0<\rho<1$, it is impossible to recover the communities exactly.
\newline
\\
\textbf{Organization.} The paper is organized as follows. The fundamental limits of community detection are discussed in Section \ref{sec:fundamental}. We present the penalized likelihood-type procedures in Section \ref{sec:procedure} to achieve the optimal rate. Some special cases of our result and the computational feasibility are discussed in Section \ref{sec:discussion}. Section \ref{sec:proof_theorem} gives the proofs of the main theorems, while Section \ref{sec:proof_lemma} provides the proofs of key technical lemmas.
\newline
\\
\textbf{Notation.} For any set $B$, we use $|B|$ to indicate its cardinality. For two arbitrary equal-length vectors $x=\{x_i\}$ and $y=\{y_i\}$, define the Hamming distance between $x$ and $y$ as $d_H(x,y)=|\{i:x_i\neq y_i\}|$, i.e., the number of coordinates with different values. For any positive integer $m$, we use $[m]$ to denote the set $\{1,2,\ldots,m\}$. For any two random variables $X$ and $Y$, we use $X\perp Y$ to indicate that they are independent. Denote $\Ber(q)$ as a Bernoulli distribution with success probability $q$, and $\Bin(m,q)$ as a binomial distribution with $m$ trials and success probability $q$. For two positive sequences ${x_n}$ and $y_n$, $x_n\lesssim y_n$ means $x_n\leq cy_n$ for some  constant $c$ not depending on $n$. We adopt the notation $x\asymp y$ if $x_n\lesssim y_n$  and $y_n\lesssim x_n$. For any scalar $z$, let $\lfloor z\rfloor=\{m\in\mathbb{Z}:m\leq z\}$ and $\lceil z\rceil=\{m\in\mathbb{Z}:m\geq z\}$. 
We use $\Theta$ short for $\Theta(n,K,a,b,\beta)$ when there is no ambiguity to drop the index $(n,K,a,b,\beta)$.

\section{Fundamental Limits of Community Detection}\label{sec:fundamental}

\subsection{Mis-match Ratio}\label{subsec::mis_match}
Before giving the exact definition of mis-match ratio, we need to introduce permutations $\Delta:[K]\rightarrow[K]$ to define equivalent partitions. For the community detection problem, there exists an identifiability issue involved with the community label. For instance, for a network with 4 nodes, assignments $(1,1,2,2)$ and $(2,2,1,1)$ give the same network partition. Define $\delta\circ\sigma$ as $\delta\in\Delta$ to be a new assignment with $(\delta\circ\sigma)(i)=\delta(\sigma(i))$ for each $i\in[n]$. This assignment is equivalent to $\sigma$. The mis-match ratio is used as the loss function, counting the proportion of nodes incorrectly clustered, minimizing over all the possible permutations as follows,
\begin{align*}
r(\sigma, \hat\sigma)=\inf_{\delta}d_H(\sigma,\delta\circ\hat\sigma)/n.
\end{align*}
The Hamming distance between $\sigma$ and $\hat\sigma$ is just to count the number of entries having different values in two vectors. Thus $r(\sigma, \hat\sigma)$ is the total number of errors divided by the total number of nodes.

\subsection{Homogeneous Stochastic Block Model}\label{subsec:homogeneous}
The Stochastic Block Model assumes the network has an underlying blockwise structure. When all $\{\theta_{i,j}\}$ take two possible values $a/n$ or $b/n$, depending on whether $\sigma(i)=\sigma(j)$ or not, we call the SBM $\textit{homogeneous}$. In this case $\{\theta_{i,j}\}$ is unique for any given $\sigma$. The homogeneous SBM is the most studied model in computer science and probability \citep{mossel2012stochastic, mossel2013proof, mossel2014consistency, hajek2015achieving, chen2014statistical}. Define
\begin{align*}
\Theta_1(n,K,a,b,\beta)\triangleq\Big\{(\sigma,\{\theta_{i,j}\})\in\Theta(n&,K,a,b,\beta):\; \theta_{i,j}=  \frac{a}{n} \text{ if }\sigma(i)=\sigma(j)\\
&\text{ and }\theta_{i,j}=  \frac{b}{n} \text{ if }\sigma(i)\neq\sigma(j),\;\forall i\neq j\Big\}.
\end{align*} 
This is a homogeneous SBM. In $\Theta_1$, since $\{\theta_{i,j}\}$ is uniquely determined by any given $\sigma$, we may write $\sigma\in \Theta_1$ instead of $(\sigma,\{\theta_{i,j}\})\in\Theta_1$ for simplicity. The same rule may be applied for any other homogeneous SBM.

Note that $\Theta_1$ is \textit{closed under permutation}. Let $\pi$ be any permutation on $[n]$, then for any $\sigma\in\Theta_1$, a new assignment $\sigma'$ defined as $\sigma'(i)=\sigma(\pi^{-1}(i))$ also belongs to $\Theta_1$. This property is very helpful for us to show $\Theta_1$ is a least favorable subspace of $\Theta$ for community detection. A minimax lower bound over $\Theta_1$ immediately gives a lower bound for a larger parameter space, such as $\Theta$.

\subsection{From Global to Local}
To establish a lower bound is challenging to work with the loss function $r(\sigma,\hat\sigma)$ directly, as it takes infimum over an equivalent class. The mis-match ratio is a \textit{global} property of the network. The key idea in this paper is to define a \textit{local} loss, and to reduce the global minimax problem into a local classification for one node.

The local loss focuses only on one node. Given the truth $\sigma$ and any procedure $\hat\sigma$, the loss of estimating the label for the $i$-th node is defined as follows. Let $S_{\sigma}(\hat\sigma)=\{\sigma':\sigma'=\delta\circ\hat\sigma,\delta\in\Delta,\;d_H(\sigma',\sigma)=\inf_{\delta}d_H(\sigma,\delta\circ\hat\sigma)\}$, and define
\begin{align*}
r(\sigma(i),\hat\sigma(i))\triangleq \sum_{\sigma'\in S_{\sigma}(\hat\sigma)}\frac{d_H(\sigma(i),\sigma'(i))}{|S_{\sigma}(\hat\sigma)|},
\end{align*}
for each $i\in[n]$. It is an average over all the possible $\sigma'\in S_{\sigma}(\hat\sigma)$.

We will see later that it is relatively easy to study the local loss. Lemma \ref{lem:global_local} shows that the global loss is equal to the local one when the SBM is homogeneous and closed under permutation.

\begin{lemma}[Global to local]\label{lem:global_local}
Let $\Lambda$ be any homogeneous parameter space that is closed under permutation. Let $\tau$ be the uniform prior over all the elements in $\Lambda$. Define the global Bayesian risk as $B_\tau(\hat\sigma)=\frac{1}{|\Lambda|}\sum_{\sigma\in\Lambda}\E r(\sigma,\hat\sigma)$ and the local Bayesian risk $B_\tau(\hat\sigma(1))=\frac{1}{|\Lambda|}\sum_{\sigma\in\Lambda}\E r(\sigma(1),\hat\sigma(1))$ for the first node. Then
\begin{align*}
\inf_{\hat\sigma}B_\tau(\hat\sigma)=\inf_{\hat\sigma}B_\tau(\hat\sigma(1)).
\end{align*}
\end{lemma}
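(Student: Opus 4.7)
The plan is to combine a pointwise decomposition of the global loss into local losses with the permutation symmetry of $\Lambda$. By the very definition of $S_\sigma(\hat\sigma)$, every $\sigma'\in S_\sigma(\hat\sigma)$ realizes the same total Hamming distance $d_H(\sigma,\sigma') = n\cdot r(\sigma,\hat\sigma)$, so writing $d_H(\sigma,\sigma') = \sum_i d_H(\sigma(i),\sigma'(i))$ and swapping the two summations in $\sum_i r(\sigma(i),\hat\sigma(i))$ gives
\begin{align*}
\sum_{i=1}^n r(\sigma(i),\hat\sigma(i)) = \frac{1}{|S_\sigma(\hat\sigma)|}\sum_{\sigma'\in S_\sigma(\hat\sigma)} d_H(\sigma,\sigma') = n\cdot r(\sigma,\hat\sigma).
\end{align*}
Averaging over the uniform prior then yields the key identity $B_\tau(\hat\sigma) = \frac{1}{n}\sum_{i=1}^n B_\tau(\hat\sigma(i))$.

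Next I would exploit that $\Lambda$ is homogeneous and closed under permutations of $[n]$. For any $\pi\in S_n$, the joint distribution of $(\sigma,A)$ under $\tau$ is invariant under the simultaneous relabeling $\sigma\mapsto\sigma\circ\pi^{-1}$ and $A_{ij}\mapsto A_{\pi^{-1}(i),\pi^{-1}(j)}$. For a procedure $\hat\sigma$, define its conjugate $\hat\sigma^\pi$ by relabeling the input $A$ according to $\pi$, running $\hat\sigma$, and then relabeling the output assignment by $\pi^{-1}$. Since Hamming distance is invariant under simultaneous node relabeling and the set $S_\sigma(\hat\sigma)$ transforms covariantly under $\pi$, one checks that $B_\tau(\hat\sigma^\pi) = B_\tau(\hat\sigma)$ and the local risks satisfy $B_\tau(\hat\sigma^\pi(i)) = B_\tau(\hat\sigma(\pi^{-1}(i)))$. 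Taking $\pi$ to be the transposition swapping $1$ and $i$ then gives $\inf_{\hat\sigma} B_\tau(\hat\sigma(i)) = \inf_{\hat\sigma} B_\tau(\hat\sigma(1))$ for every $i\in[n]$.

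The two ingredients now fit together. For the lower bound,
\begin{align*}
\inf_{\hat\sigma} B_\tau(\hat\sigma) = \inf_{\hat\sigma} \frac{1}{n}\sum_{i=1}^n B_\tau(\hat\sigma(i)) \geq \frac{1}{n}\sum_{i=1}^n \inf_{\hat\sigma} B_\tau(\hat\sigma(i)) = \inf_{\hat\sigma} B_\tau(\hat\sigma(1)).
\end{align*}
For the matching upper bound, pick any $\hat\sigma^*$ with $B_\tau(\hat\sigma^*) \leq \inf_{\hat\sigma} B_\tau(\hat\sigma) + \varepsilon$ and form the randomized symmetrization $\tilde\sigma$ that draws $\pi$ uniformly from $S_n$ and outputs $(\hat\sigma^*)^\pi(A)$. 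Linearity of expectation gives $B_\tau(\tilde\sigma) = B_\tau(\hat\sigma^*)$, and averaging $B_\tau((\hat\sigma^*)^\pi(i)) = B_\tau(\hat\sigma^*(\pi^{-1}(i)))$ over $\pi$ forces the local risks $B_\tau(\tilde\sigma(i))$ to agree for every $i$, hence to equal their common average $B_\tau(\tilde\sigma)$. Consequently $\inf_{\hat\sigma} B_\tau(\hat\sigma(1)) \leq B_\tau(\tilde\sigma(1)) = B_\tau(\hat\sigma^*) \leq \inf_{\hat\sigma} B_\tau(\hat\sigma) + \varepsilon$, and letting $\varepsilon\downarrow 0$ closes the loop.

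The main delicate point is keeping the two distinct symmetries disentangled: the label permutations $\delta\in\Delta$ that enter the definition of $S_\sigma(\hat\sigma)$ versus the node permutations $\pi\in S_n$ implementing the closed-under-permutation property. One must check that node relabeling commutes with both the Hamming distance and the minimization over $\delta$, so that $S_{\sigma\circ\pi^{-1}}(\hat\sigma\circ\pi^{-1})$ is precisely the $\pi$-image of $S_\sigma(\hat\sigma)$; once this compatibility is in hand, the decomposition and the symmetrization step are robust and the remainder is formal.
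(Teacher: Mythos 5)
Your two building blocks are sound and match the paper's: the pointwise identity $\sum_{i=1}^n r(\sigma(i),\hat\sigma(i))=n\,r(\sigma,\hat\sigma)$ (which holds because every $\sigma'\in S_\sigma(\hat\sigma)$ attains the common value $d_H(\sigma,\sigma')=n\,r(\sigma,\hat\sigma)$), and the equivariance $B_\tau(\hat\sigma^\pi(i))=B_\tau(\hat\sigma(\pi(i)))$ coming from $\p_\sigma(G)=\p_{\sigma_\pi}(G_\pi)$ and closure of $\Lambda$ under node permutations. Your first display then correctly yields $\inf_{\hat\sigma}B_\tau(\hat\sigma)\geq\inf_{\hat\sigma}B_\tau(\hat\sigma(1))$, and this is in fact the only direction the paper uses in the minimax lower bound (and the only direction it proves in detail, via symmetrizing the global Bayes estimator rather than via $\inf\sum\geq\sum\inf$ plus node symmetry as you do).

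The gap is in the paragraph you call the ``matching upper bound'': it does not prove the reverse inequality. Your chain $\inf_{\hat\sigma}B_\tau(\hat\sigma(1))\leq B_\tau(\tilde\sigma(1))=B_\tau(\hat\sigma^*)\leq\inf_{\hat\sigma}B_\tau(\hat\sigma)+\varepsilon$ again gives $\inf_{\hat\sigma}B_\tau(\hat\sigma)\geq\inf_{\hat\sigma}B_\tau(\hat\sigma(1))$, i.e.\ the same direction as before, so the claimed equality is never closed. The reason your symmetrization trick cannot be recycled for the missing direction $\inf_{\hat\sigma}B_\tau(\hat\sigma)\leq\inf_{\hat\sigma}B_\tau(\hat\sigma(1))$ is structural: symmetrizing an estimator equalizes its local risks at the value of its \emph{global} risk, $B_\tau(\tilde\sigma(i))=\frac{1}{n}\sum_j B_\tau(\hat\sigma^*(j))=B_\tau(\hat\sigma^*)$. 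Starting from a near-minimizer of the global risk this is exactly what you want, but to get the reverse inequality you must start from a near-minimizer $\hat\sigma^*$ of the \emph{local} risk at node $1$ and exhibit an estimator whose global risk is at most $B_\tau(\hat\sigma^*(1))$; symmetrizing such a $\hat\sigma^*$ raises its node-$1$ risk to $B_\tau(\hat\sigma^*)=\frac{1}{n}\sum_j B_\tau(\hat\sigma^*(j))$, which may far exceed $B_\tau(\hat\sigma^*(1))$ if $\hat\sigma^*$ sacrifices other nodes. Note also that the local loss $r(\sigma(i),\hat\sigma(i))$ is defined through the globally aligned set $S_\sigma(\hat\sigma)$, so one cannot simply stitch together node-wise optimal estimators either. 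You need a separate argument here (the paper itself only asserts this direction is ``nearly identical'' and omits it); as written, your proof establishes one inequality twice and the lemma's equality not at all.
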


The proof of Lemma \ref{lem:global_local} is involved. It is established by exploiting the property of exchangeability of the parameter space $\Lambda$.

\subsection{Minimax Lower Bound}
By constructing a least favorable case of $\Theta_1$, we have the following lower bound for the minimax rate. We present the lower bound under milder conditions than what is stated in Theorem \ref{thm:main}.

\begin{theorem}\label{thm:lower_beta}
Under the assumption $\frac{nI}{K}\rightarrow\infty$, we have
\begin{align}\label{eqn:lower_beta}
\inf_{\hat\sigma}\sup_{\Theta_1(n,K,a,b,\beta)}\mathbb{E}r(\sigma,\hat\sigma)\geq
\begin{cases}
\exp\big(-(1+o(1))\frac{nI}{2}\big),K=2,\\
\exp\big(-(1+o(1))\frac{nI}{\beta K}\big),K\geq 3.
\end{cases}
\end{align}
If $\frac{nI}{K}=O(1)$, then $\inf_{\hat\sigma}\sup_{\Theta_1(n,K,a,b,\beta)}\R(\sigma,\hat\sigma)\geq c$ for some positive constant $c$.
\end{theorem}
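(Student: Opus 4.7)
My plan is to (i) restrict the supremum to a closed-under-permutation subspace $\Lambda\subset\Theta_1$ and pass to a uniform Bayes risk, (ii) invoke Lemma \ref{lem:global_local} to collapse the global mis-match loss into a local classification loss at a single node, and (iii) lower-bound the resulting local Bayes risk by a sharp two-point Chernoff-Hellinger estimate.

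For step (i), when $K\geq 3$ I would take $\Lambda$ to be the $S_n$-orbit of any representative assignment whose two smallest communities attain the minimum size $n_1=n_2=\lceil n/(\beta K)\rceil$ permitted by $\Theta$, with the remaining $K-2$ community sizes chosen inside $[n/(\beta K),\beta n/K]$; for $K=2$ I take the orbit of a perfectly balanced partition. By construction $\Lambda$ is closed under permutation, so Lemma \ref{lem:global_local} applied to the uniform prior $\tau$ on $\Lambda$ yields
\begin{align*}
\inf_{\hat\sigma}\sup_{\Theta_1}\E r(\sigma,\hat\sigma)\;\geq\;\inf_{\hat\sigma}B_\tau(\hat\sigma)\;=\;\inf_{\hat\sigma}B_\tau(\hat\sigma(1)).
\end{align*}

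For steps (ii)--(iii), I would reduce the $K$-ary local problem for $\sigma(1)$ to the binary test $H_1=\{\sigma(1)=1\}$ versus $H_2=\{\sigma(1)=2\}$, which by the symmetry of $\Lambda$ loses at most a factor of $2/K$ (and nothing when $K=2$). Conditional on the labels of nodes $2,\ldots,n$, the only edges whose law depends on the hypothesis are the $n_1+n_2-1$ edges joining node $1$ to communities $1$ and $2$: they are products of $\Ber(a/n)$'s and $\Ber(b/n)$'s with the two marginals swapped between $H_1$ and $H_2$. With $\rho_0=\sqrt{(a/n)(b/n)}+\sqrt{(1-a/n)(1-b/n)}=e^{-I/2}$, the Bhattacharyya affinity of the two product laws equals $\rho_0^{n_1+n_2-1}$. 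The key quantitative step is then a sharp Chernoff-Hellinger lower bound for this binary Bayes error: introducing the Chernoff-optimal tilted measure $\mu\propto\sqrt{p_0p_1}$ and controlling the centered log-likelihood ratio under $\mu$ by a CLT or Berry-Esseen estimate (noting that each per-edge variance under $\mu$ is of order $a/n$, so the aggregate variance stays bounded) upgrades the crude Le~Cam inequality $P_e\geq\rho^2/4$ to the sharp
\begin{align*}
\inf_{\hat H}B_\tau(\hat H)\;\geq\;\tfrac12(1+o(1))\exp\!\left(-\tfrac12(n_1+n_2-1)I\right).
\end{align*}
Substituting $n_1+n_2-1\leq 2n/(\beta K)+O(1)$ for $K\geq 3$ and $n_1+n_2-1=n-1$ for $K=2$ produces the two announced exponents. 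When $nI/K=O(1)$ the quantity $(n_1+n_2-1)I$ is itself $O(1)$, so the same estimate lower-bounds the Bayes error by an absolute positive constant, giving the second assertion.

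The chief obstacle is recovering the correct factor of $1/2$ inside the exponent: an off-the-shelf Le~Cam-Hellinger bound $\rho^2/4$ yields exponent $(n_1+n_2-1)I$, which is off by a factor of $2$ and cannot match the eventual upper bound, so the Chernoff-tilted change of measure (with the attendant CLT/moment control of the log-likelihood ratio under $\mu$) must be executed carefully. A secondary technical point is the reduction from the $K$-ary local test to the binary test without losing anything in the exponent, which relies on the exchangeability of the labels under $\tau$ and the fact that discarding all hypotheses but the two ``closest'' ones costs only an affine factor; and one must verify that the size profile defining $\Lambda$ is realizable inside $[n/(\beta K),\beta n/K]$, which is what ultimately constrains the admissible range of $\beta$.
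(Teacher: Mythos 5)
Your overall route is the paper's route: restrict to a permutation-closed least-favorable subspace, invoke Lemma \ref{lem:global_local} to pass to the local risk at node $1$, reduce to a two-point test, and prove a sharp Chernoff lower bound for that test by exponential tilting (the paper's Lemma \ref{lem::binomial_tail} is exactly the tilted-measure-plus-Chebyshev argument you sketch, with the tilt at $t^\star$ being your Bhattacharyya measure $\mu\propto\sqrt{p_0p_1}$). However, your construction of $\Lambda$ has a genuine defect. If $\Lambda$ is the $S_n$-orbit of a single representative whose two smallest communities both have size exactly $\lceil n/(\beta K)\rceil$, then every $\sigma\in\Lambda$ has the identical size vector $(n_1,\dots,n_K)$; consequently, conditionally on the labels of nodes $2,\dots,n$, the label of node $1$ is \emph{determined} (it must go to the unique community that is one short), and your binary test $H_1$ vs.\ $H_2$ has no second admissible hypothesis. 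Equivalently, moving node $1$ from community $1$ to community $2$ produces sizes $\lceil n/(\beta K)\rceil-1$ and $\lceil n/(\beta K)\rceil+1$, and the first falls below the floor $n/(\beta K)$, so the alternative is not in $\Theta_1$. This is precisely why the paper's $\Theta^L$ is built from a size profile in which sizes $s$ and $s+1$ (for $K\ge3$ also $s-1$) each occur for a constant fraction of the $K$ communities: then for a constant fraction of the prior mass there is a genuine Hamming-distance-$1$ alternative \emph{inside} $\Lambda$ obtained by relabeling node $1$ from an $(s{+}1)$-community to an $s$-community, and the two-point Bayes test between those two assignments is exactly the comparison of $\sum_{u\in J_0}A_{1,u}$ with $\sum_{u\in J_1}A_{1,u}$ over $|J_0|=|J_1|=s$ edges. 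You would need to modify your orbit in the same way; the fix is routine but it is the missing idea.

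A second, quantitative gap: your $K$-ary-to-binary reduction pays a multiplicative prefactor $2/K$. Theorem \ref{thm:lower_beta} assumes only $nI/K\to\infty$, not $nI/(K\log K)\to\infty$, so $\log K$ need not be $o(nI/K)$ and the factor $1/K$ cannot in general be absorbed into the $(1+o(1))$ in the exponent (e.g.\ $K\asymp n/\log n$ with $nI/K\asymp\log\log n$). The paper's pairing construction in Lemma \ref{lem::multiple_test} avoids this: it conditions on node $1$ lying in an $(s{+}1)$-community, an event of \emph{constant} probability $\epsilon$, and then compares against a single deterministically chosen alternative, so the prefactor is an absolute constant. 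Finally, a small bookkeeping point: the admissible range $\beta<\sqrt{5/3}$ comes from the upper bound (Lemma \ref{lem::number_black_white_extend}), not from realizability of the lower-bound configuration, so your closing remark attributes the constraint to the wrong side. The exponent arithmetic ($\tfrac12(n_1+n_2-1)I\to nI/(\beta K)$ for $K\ge3$ and $\to nI/2$ for $K=2$) and the $nI/K=O(1)$ case are handled correctly.
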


The forms of minimax rates are different for two cases $K\geq 3$ and $K=2$. For $K\geq 3$, it is relatively more challenging to discover and distinguish small communities, rather than the communities with larger sizes. The least favorable case is the case for which at least a constant proportion of communities are of size $\frac{n}{\beta K}$. 
The hardness of the community detection in this setting is then determined by the ability to recover and distinguish such small communities. For $K=2$, the least favorable setting in $\Theta_1$ is when the two communities are of the same size. When there are only two communities, it is actually easier to recover the non-equal-sized communities, by identifying the larger one first and then labeling the remaining nodes as from the smaller one.
~\\
\newline
\noindent\textbf{Approximately Equal-Sized Case}: We are interested in the case with $\beta=1+o(1)$, where communities are almost of the same size. Networks of community sizes exactly equal to $\frac{n}{K}$ are the most studied settings \citep{chen2014statistical, mossel2013proof, chin2015stochastic}. Here we allow a small fluctuation of community sizes. Denote $\Theta^0$ as follows,
\begin{align*}
\Theta^0(&n,K,a,b)\triangleq\Bigg\{(\sigma,\{\theta_{i,j}\}):\;\sigma:[n]\rightarrow [K]^n,n_k=(1+o(1))\frac{nI}{K},\forall k\in[K],\\
&\theta_{i,i}=0,\;\forall i\in[n],\;\theta_{i,j}= \frac{a}{n} \text{ if }\sigma(i)=\sigma(j)\text{ and }\theta_{i,j}=  \frac{b}{n} \text{ if }\sigma(i)\neq\sigma(j),\;\forall i\neq j\Bigg\}.
\end{align*}
Note that $\Theta^0(n,K,a,b)$ is $\Theta_1(n, K, a, b, \beta)$ with $\beta=1+o(1)$, for which we have the following minimax lower bound.

\begin{theorem}\label{thm:lower}
Under the assumption $\frac{nI}{K}\rightarrow\infty$, we have
\begin{align}\label{eqn:lower}
\inf_{\hat\sigma}\sup_{\Theta^0(n,K,a,b)}\E r(\sigma,\hat\sigma)\geq \exp\Big(-(1+o(1))\frac{nI}{K}\Big).
\end{align}
If $\frac{nI}{K}=O(1)$, then $\inf_{\hat\sigma}\sup_{\Theta^0(n,K,a,b)}\E r(\sigma,\hat\sigma)\geq c$ for some positive constant $c$.
\end{theorem}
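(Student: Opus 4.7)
The plan is to obtain Theorem~\ref{thm:lower} as a direct specialization of Theorem~\ref{thm:lower_beta} to $\beta = 1+o(1)$. The space $\Theta^0(n,K,a,b)$ is exactly $\Theta_1(n,K,a,b,\beta_n)$ for any sequence $\beta_n = 1+o(1)$, since the constraint $n_k = (1+o(1))n/K$ is equivalent to $n_k\in[n/(\beta_n K),\beta_n n/K]$. For $K=2$, Theorem~\ref{thm:lower_beta} already yields $\exp(-(1+o(1))nI/2)$ on an equal-sized least-favorable instance that lies in $\Theta^0$, and $nI/2 = nI/K$; for $K\geq 3$, its rate $\exp(-(1+o(1))nI/(\beta K))$ becomes $\exp(-(1+o(1))nI/K)$ once $\beta = 1+o(1)$, since $1/\beta = 1+o(1)$ is absorbed into the $1+o(1)$ prefactor. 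The hard instance underlying Theorem~\ref{thm:lower_beta} can be chosen with community sizes in $[(1-o(1))n/K,(1+o(1))n/K]$, which sits inside $\Theta^0$, so both claims (the exponential rate and the constant lower bound in the regime $nI/K=O(1)$) transfer verbatim.

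To confirm this directly on $\Theta^0$ I would re-run the reduction scheme. Let $\Lambda\subset\Theta^0$ be the homogeneous, permutation-closed subspace of assignments with community sizes $\lfloor n/K\rfloor$ or $\lceil n/K\rceil$. By Lemma~\ref{lem:global_local}, the global Bayes risk under the uniform prior on $\Lambda$ equals the local Bayes risk for node~$1$. Conditional on $\sigma(2),\ldots,\sigma(n)$, the local problem is a $K$-ary hypothesis test for $\sigma(1)$ from the independent Bernoulli observations $A_{1,j}$, whose success probability is $a/n$ if $\sigma(j)=\sigma(1)$ and $b/n$ otherwise. The $K$-ary Bayes risk is lower bounded by a single pairwise confusion probability such as $\p(\hat\sigma(1)=2\mid\sigma(1)=1)$ under the MAP rule, which reduces to binary testing of two product Bernoulli measures that differ on $n_1+n_2 = (2+o(1))n/K$ coordinates, each a $\Ber(a/n)$-versus-$\Ber(b/n)$ pair of Hellinger affinity $e^{-I/2}$.

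A sharp Chernoff / Neyman--Pearson estimate then gives a binary Bayes risk of at least $\exp(-(1+o(1))(n_1+n_2)I/2) = \exp(-(1+o(1))nI/K)$; the same estimate in the regime $nI/K = O(1)$ yields a positive constant lower bound, since the product Hellinger affinity then remains bounded away from zero. The main obstacle is the sharp $(1+o(1))$ in the exponent: the naive Le~Cam inequality $\mathrm{err}\geq \rho^2/4$ yields only $\exp(-(2+o(1))nI/K)$, off by a factor of two. One must instead invoke the tight large-deviation bound that the sum of Type~I and Type~II errors of the Neyman--Pearson test for $m$ i.i.d.\ $(\Ber(a/n),\Ber(b/n))$ pairs decays as $e^{-(1+o(1))mI/2}$, where $I/2 = -\log(\sqrt{(a/n)(b/n)} + \sqrt{(1-a/n)(1-b/n)})$ is precisely the R\'enyi divergence of order $1/2$. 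This sharp binary testing lemma is the technical heart shared with Theorem~\ref{thm:lower_beta}, and once it is in hand the global-to-local reduction completes the proof.
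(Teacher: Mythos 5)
Your overall strategy coincides with the paper's: restrict to a homogeneous, permutation-closed subfamily of $\Theta^0$, invoke Lemma \ref{lem:global_local} to pass from the global mis-match ratio to the local Bayes risk at node $1$, reduce that to a two-point test between assignments at Hamming distance one, and close with a sharp Cram\'er--Chernoff lower bound rather than the lossy Le Cam/Hellinger-squared inequality. Your accounting of the exponent ($2\lfloor n/K\rfloor$ differing coordinates, each contributing $I/2$, hence $(1+o(1))nI/K$) and your identification of the factor-of-two pitfall are exactly the content of Lemmas \ref{lem::multiple_test} and \ref{lem::binomial_tail}. The opening shortcut via Theorem \ref{thm:lower_beta} would be circular in the paper's logical order (that theorem is itself proved by the same argument, in the supplement), but since you also supply the direct argument this is harmless.

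The one genuine gap is in your choice of least favorable subspace $\Lambda$: you take community sizes in $\{\lfloor n/K\rfloor,\lceil n/K\rceil\}$. When $K$ divides $n$ these two values coincide, so every $\sigma\in\Lambda$ has all communities of size exactly $n/K$; relabelling a single node then produces sizes $n/K-1$ and $n/K+1$ and exits $\Lambda$, so the perturbed hypothesis is not in the support of the uniform prior and the two-point Bayes-risk extraction breaks. Even when $K\nmid n$, the reduction needs a \emph{constant fraction} of communities of each of the two sizes involved in the swap --- this is where the constant $\epsilon$ in Lemma \ref{lem::multiple_test} comes from, via $|\Theta^L_1|/|\Theta^L|=x_2/n\geq\epsilon$ --- and if, say, $n=\lfloor n/K\rfloor K+1$ there is only one larger community and that fraction degenerates to $O(1/K)$. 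The paper repairs exactly this by admitting three sizes $\lfloor n/K\rfloor-1$, $\lfloor n/K\rfloor$, $\lfloor n/K\rfloor+1$ and choosing a decomposition $K=K_1+K_2+K_3$ with $\epsilon K<\min(K_1,K_2)$, which guarantees both that the one-node perturbation stays inside the subspace (it preserves the multiset of community sizes) and that node $1$ lies in a swappable community with constant prior probability. With that modification, your argument goes through as written.
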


\noindent Compared with Theorem \ref{thm:lower_beta}, the forms of rates for $K=2$ and $K\geq 3$ are the same in $\Theta^0$. The proof of Theorem \ref{thm:lower} is provided in Section \ref{sec:proof_theorem}. We defer the proof of Theorem \ref{thm:lower_beta} to the supplement material \citep{supplement}, since it is almost identical to that of Theorem \ref{thm:lower}.

\section{Rate-optimal Procedure}\label{sec:procedure}
We develop a range of penalized likelihood-type procedures to achieve the optimal mis-match ratio. 
Throughout the section $\sigma_0$ is denoted as the underlying truth. 

\subsection{Penalized Likelihood-type Estimation}
The penalized procedure is based on the likelihood of a homogeneous network, although risk upper bounds are established for more general networks. If the network is homogeneous ($\Theta^0$ and $\Theta_1$), for which the within and between community probabilities are exactly equal to $a/n$ and $b/n$ respectively, the log-likelihood function is
\begin{align*}
L(\sigma;A)&=\log(\frac{a}{n})\sum_{i<j}A_{i,j}1_{\{\sigma(i)=\sigma(j)\}}  +\log (1-\frac{a}{n})\sum_{i<j}(1-A_{i,j})1_{\{\sigma(i)=\sigma(j)\}}\\
&+\log(\frac{b}{n})\sum_{i<j}A_{i,j}1_{\{\sigma(i)\neq\sigma(j)\}}  +\log (1-\frac{b}{n})\sum_{i<j}(1-A_{i,j})1_{\{\sigma(i)\neq\sigma(j)\}}.
\end{align*}
Since $\sum_{i<j}A_{i,j}1_{\{\sigma(i)=\sigma(j)\}}+\sum_{i<j}A_{i,j}1_{\{\sigma(i)\neq\sigma(j)\}}=\sum_{i<j}A_{i,j}$ for all $\sigma$, we can write $L(\sigma;A)$ as
\begin{align*}
L(\sigma;A)=\log\frac{a(1-b/n)}{b(1-a/n)}\sum_{i<j}A_{i,j}1_{\{\sigma(i)=\sigma(j)\}}-\log\frac{1-b/n}{1-a/n}\sum_{i<j}1_{\{\sigma(i)=\sigma(j)\}}+f(A),
\end{align*}
where $f(A)$ is a function not depending on $\sigma$.
Then the maximum likelihood estimator $\hat\sigma^{MLE}$ is as follows,
\begin{align}\label{eqn:mle}
\begin{split}
\hat\sigma^{MLE}&=\argmax_{\sigma}L(\sigma;A)\\
&=\argmax_{\sigma} \log\frac{a(1-b/n)}{b(1-a/n)}\sum_{i<j}A_{i,j}1_{\{\sigma(i)=\sigma(j)\}}-\log\frac{1-b/n}{1-a/n}\sum_{i<j}1_{\{\sigma(i)=\sigma(j)\}}.
\end{split}
\end{align}
The above maximum likelihood estimator can be decomposed into two terms. The first one is the sum of all $A_{i,j}$ for all $i$ and $j$ belonging to the same communities of $\sigma$. The second term is a penalty over the sum of sizes of all communities. There is a trade-off between these two terms. The first term is maximized when there is only one community, while the second term, a penalty term, is maximized when all community sizes are equal. However the second term is dropped when the community sizes are required to be exactly equal, i.e., the maximum likelihood estimator over all $\sigma$ with a community size $n/K$ for every community has a simpler form, $\hat\sigma^{MLE}=\argmax_\sigma\sum_{i<j}A_{i,j}1_{\{\sigma(i)=\sigma(j)\}}$. 

When the parameter space is not homogeneous (e.g. $\Theta$), the maximum likelihood estimator may not have a simple form as Equation (\ref{eqn:mle}). However, we still propose to use the identical simple form of penalized likelihood estimator as Equation (\ref{eqn:mle}), i.e., 
\begin{align*}
\hat\sigma=\argmax_{\sigma\in\Theta}T(\sigma)\text{ with }T(\sigma)\triangleq \sum_{i<j}A_{i,j}1_{\{\sigma(i)=\sigma(j)\}}-\lambda\sum_{i<j}1_{\{\sigma(i)=\sigma(j)\}},
\end{align*}
where we set
\begin{align}
\lambda=\log\Big(\frac{1-b/n}{1-a/n}\Big)\Big/\log\Big(\frac{a(1-b/n)}{b(1-a/n)}\Big),\;\forall K\geq 2.\label{eqn:lambda}
\end{align}
When the parameter space is homogeneous, $\hat\sigma$ is identical to the maximum likelihood estimator. The optimality result will be obtained for the parameter space $\Theta$, which allows the network to be inhomogeneous, and imbalanced in  the sense that the community sizes may be different.

\subsection{Other Choices of $\lambda$}
In the previous section we provide a unified $\lambda$ for the penalized likelihood-type estimation for both $K=2$ and $K\geq 3$. It is worthwhile to point out that for $K\geq 3$ the optimality can be attained for a wide range of $\lambda$. Let $t^{\star}=\frac{1}{2}\log\frac{a(1-b/n)}{b(1-a/n)}$. It can be shown that $t^{\star}$ is the minimizer of the moment generating function for the difference of two Bernoulli variables, i.e., $t^\star=\argmin_{t>0} \E e^{t(X-Y)}$, where $X\sim\text{Ber}(\frac{b}{n})$ and $Y\sim\text{Ber}(\frac{a}{n})$. It is equivalent to write $\lambda$ in Equation (\ref{eqn:lambda}) as follows,
\begin{align*}
\lambda&=-\frac{1}{2t^\star}\log\Big(\frac{\frac{a}{n}\exp(-t^\star)+1-\frac{a}{n}}{\frac{b}{n}\exp(t^\star)+1-\frac{b}{n}}\Big)\\
&=-\frac{1}{2t^\star}\log\Big(\frac{a}{n}\exp(-t^\star)+1-\frac{a}{n}\Big)+\frac{1}{2t^\star}\log\Big(\frac{b}{n}\exp(t^\star)+1-\frac{b}{n}\Big).
\end{align*}
From the equation above, we can interpret $\lambda$ as a weighted sum between two terms, with the first one more involving the within-community probability $\frac{a}{n}$, and the second more focusing on the between-community probability $\frac{b}{n}$. Define
\begin{align}
\lambda=
\begin{cases}
-\frac{1}{2t^\star}\log\big(\frac{a}{n}e^{-t^\star}+1-\frac{a}{n}\big)+\frac{1}{2t^\star}\log\big(\frac{b}{n}e^{t^\star}+1-\frac{b}{n}\big),\;K=2\\
-\frac{w}{t^\star}\log\big(\frac{a}{n}e^{-t^\star}+1-\frac{a}{n}\big)+\frac{(1-w)}{t^\star}\log\big(\frac{b}{n}e^{t^\star}+1-\frac{b}{n}\big)\;K\geq 3,
\end{cases}\label{eqn:lambda2}
\end{align}
where $w$ in any constant in $[0,1]$. We can clearly see that $\lambda$ in Equation (\ref{eqn:lambda}) is a special case of $\lambda$ in (\ref{eqn:lambda2}) with $w=1/2$. In Section \ref{subsec:upper}, we give theoretical properties of penalized likelihood estimation for all $\lambda$ in Equation (\ref{eqn:lambda2}).

\subsection{Minimax Upper Bound}\label{subsec:upper}
For the general SBM $\Theta$, the risk upper bound of the penalized likelihood estimator, for every $\lambda$ in Equation (\ref{eqn:lambda2}), defined in the previous section, matches the minimax lower bound given in Theorem \ref{thm:lower_beta}.
\begin{theorem}\label{thm:upper_beta}
Assume $\frac{nI}{K\log K}\rightarrow\infty$ and $K\geq 2$. For the penalized maximum likelihood estimator $\hat\sigma$ with $\lambda$ defined in (\ref{eqn:lambda2}), we have
\begin{align*}
\sup_{\Theta(n,K,a,b,\beta)}\mathbb{E}r(\hat{\sigma},\sigma)\leq
\begin{cases}
\exp\big(-(1+o(1))\frac{nI}{2}\big),K=2,\\
\exp\big(-(1+o(1))\frac{nI}{\beta K}\big),K\geq 3,
\end{cases}
\end{align*}
where $1\leq \beta<\sqrt{5/3}$.
\end{theorem}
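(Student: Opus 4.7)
The plan is to bound $\E r(\hat\sigma,\sigma_0)$ by a two-step strategy: a global argument first shows that $\hat\sigma$ is close to $\sigma_0$ (up to permutation) with high probability, and then a local per-node analysis produces the sharp exponential rate. Throughout, $\sigma_0$ denotes the truth and I absorb the permutation equivalence at the end.

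\emph{Step 1 (near-consistency).} By construction $T(\hat\sigma)\geq T(\sigma_0)$. I plan to show that, with failure probability negligible compared to the target rate, no $\sigma\in\Theta$ with $d_H(\sigma,\sigma_0)>\gamma n$ (after optimal permutation) can satisfy $T(\sigma)\geq T(\sigma_0)$, for a suitably chosen vanishing sequence $\gamma=\gamma_n$. The difference $T(\sigma)-T(\sigma_0)$ is a signed sum of independent Bernoullis plus a deterministic penalty, whose expectation is negative with magnitude of order $m\cdot(a-b)/n\cdot(\text{typical community size})$ when $d_H(\sigma,\sigma_0)=m$, provided $\lambda$ from (3.2) lies between $b/n$ and $a/n$. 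A Chernoff/Bennett tail combined with a union bound stratified by $m$, with at most $\binom{n}{m}K^m$ candidates per stratum, then closes the estimate. The assumption $nI/(K\log K)\to\infty$ is precisely what is needed to absorb the entropy term $m\log K$ from the union bound into the Chernoff exponent of order $m\cdot nI/K$.

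\emph{Step 2 (local sharp analysis).} Conditional on the event from Step 1, fix a node $i$ with $\hat\sigma(i)=k\neq k_0=\sigma_0(i)$ and invoke the optimality of $\hat\sigma$ against the single-node swap $\hat\sigma'$ that moves $i$ to $k_0$. Computing $T(\hat\sigma)-T(\hat\sigma')\geq 0$ yields
\begin{align*}
\sum_{j\neq i,\ \hat\sigma(j)=k}A_{i,j}-\sum_{j\neq i,\ \hat\sigma(j)=k_0}A_{i,j}\ \geq\ \lambda\bigl(n_k(\hat\sigma)-n_{k_0}(\hat\sigma)-1\bigr).
\end{align*}
Using $r(\hat\sigma,\sigma_0)\leq\gamma_n$ from Step 1, I replace $\hat\sigma$ by $\sigma_0$ on the index sets at a vanishing cost, reducing the event to a large-deviation event for a difference of a $\Bin(n_k,b/n)$ and a $\Bin(n_{k_0}-1,a/n)$, independent of the (perturbed) indexing. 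Optimizing the exponential Markov bound at the tilt $t^\star=\tfrac{1}{2}\log\bigl(a(1-b/n)/(b(1-a/n))\bigr)$ produces a Chernoff exponent equal to $(1+o(1))(n_k+n_{k_0}-1)I/2$; this optimal exponent is verified to hold for every $\lambda$ in the one-parameter family (3.2), since varying $w\in[0,1]$ only shifts the threshold on the right-hand side, not the optimal tilt. Taking the worst pair $(k,k_0)$ under the constraint $n_l\in[n/(\beta K),\beta n/K]$ picks out the smallest admissible value of $n_k+n_{k_0}$: for $K=2$ this is exactly $n$, yielding $\exp(-(1+o(1))nI/2)$; for $K\geq 3$ this minimum is $2n/(\beta K)$, attained when both communities lie at the lower size bound (which the hypothesis $\beta<\sqrt{5/3}$ keeps compatible with $\sum_k n_k=n$), yielding $\exp(-(1+o(1))nI/(\beta K))$. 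Summing the per-node bounds over $i\in[n]$ and over the at most $K-1$ wrong communities, and observing that the union-bound penalty $K$ is absorbed into $(1+o(1))$ under $nI/(K\log K)\to\infty$, completes the proof.

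The main obstacle is the statistical dependence between $\hat\sigma$ and the row $A_{i,\cdot}$ in Step 2, which prevents a direct Chernoff bound on the Bernoulli sums indexed by $\hat\sigma$. The resolution is to rely on Step 1: once $r(\hat\sigma,\sigma_0)\leq\gamma_n$, the sets $\{j:\hat\sigma(j)=l\}$ and $\{j:\sigma_0(j)=l\}$ differ by at most $\gamma_n n$ indices, so swapping $\hat\sigma$ for $\sigma_0$ in the key inequality alters the Chernoff exponent only by $o(nI/K)$, absorbed into the $(1+o(1))$ factor. A secondary technical point is to keep the community-size quantities $n_l(\hat\sigma)$ close to the true $n_l(\sigma_0)$, which follows from $\hat\sigma\in\Theta$ together with Step 1, but which must be tracked carefully because the constant $\beta$ enters the rate; the restriction $\beta<\sqrt{5/3}$ ensures the penalty-driven perturbation from $\lambda(n_k(\hat\sigma)-n_{k_0}(\hat\sigma)-1)$ remains strictly dominated by the Chernoff signal.
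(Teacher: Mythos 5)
Your Step 1 is sound and is essentially the large-$m$ part of the paper's argument: a union bound stratified by $d(\sigma,\sigma_0)=m$, with $\binom{n}{m}K^m\le(enK/m)^m$ candidates per stratum (Proposition \ref{prop::cardinality}) against a Chernoff exponent of order $nmI/K$. The problem is Step 2. The paper never performs a per-node, single-swap analysis for the penalized MLE; it extracts the sharp constant from the \emph{same} global union bound, by showing (Lemmas \ref{lem::number_black_white} and \ref{lem::number_black_white_extend}) that for every $\sigma$ at distance $m$ one has $\alpha\wedge\gamma\ge nm/(\beta K)-m^2$, so that $P_m\le(enK/m)^m e^{-(1-o(1))nmI/(\beta K)}$ already carries the right exponent for small $m$, and then bounding $\E r\le m_0/n+\sum_{m\ge m_0}P_m$ with $m_0=n\exp(-(1-o(1))nI/(\beta K))$ in the regime where exact recovery fails. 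Your local route is the one used in the follow-up algorithmic paper \citep{gao2015sbmalgo}, but there it is made rigorous by a leave-one-out/sample-splitting device that makes the preliminary assignment independent of row $A_{i,\cdot}$ — a device that is not available for the one-shot estimator $\hat\sigma=\argmax T(\sigma)$.

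Concretely, the gap is your claim that, on the event $r(\hat\sigma,\sigma_0)\le\gamma_n$, you may ``replace $\hat\sigma$ by $\sigma_0$ on the index sets at a vanishing cost.'' The sets $\{j:\hat\sigma(j)=l\}$ and $\{j:\sigma_0(j)=l\}$ differ on a \emph{random, data-dependent} set $D$ with $|D|\le\gamma_n n$, so to decouple from $A_{i,\cdot}$ you must control $\max_{|D|\le\gamma_n n}\sum_{j\in D}A_{i,j}$. Since the entries are $0$--$1$, this maximum equals $\min(\gamma_n n,\,d_i)$, which is of order $a$ (or $\gamma_n n$) with high probability — vastly larger than the signal $(a-b)\,n_{k_0}/n\asymp(a-b)/K$ whenever $\gamma_n$ decays at the rate Step 1 can actually deliver. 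A union bound over the possible difference sets $D$ reintroduces an entropy term of order $\gamma_n n\log(K/\gamma_n)$ that must again be weighed against a Chernoff exponent proportional to $|D|\,nI/K$; carrying that comparison out for every admissible $D$ is precisely the paper's global computation, so the local step buys nothing and, as written, does not close. (A secondary point: for $K=3$ and $\beta<\sqrt{5/3}$ the size constraint $\sum_k n_k=n$ actually forbids two communities both sitting at the lower bound $n/(\beta K)$, so your identification of the worst pair needs the more careful accounting of Lemma \ref{lem::number_black_white_extend}.)
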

\noindent\textbf{Approximately Equal-Sized Case}: For the special parameter space $\Theta^0$ for which community sizes are almost equal, we have the following result, a form analogous to Theorem \ref{thm:upper_beta}.

\begin{theorem}\label{thm:upper}
Assume $\frac{nI}{K\log K}\rightarrow\infty$ and $K\geq 2$. For the penalized maximum likelihood estimator $\hat\sigma$ with $\lambda$ defined in (\ref{eqn:lambda2}), we have
\begin{align*}
\sup_{\Theta^0(n,K,a,b)}\mathbb{E}r(\hat{\sigma},\sigma)\leq\exp\big(-(1+o(1))\frac{nI}{K}\big).
\end{align*}
\end{theorem}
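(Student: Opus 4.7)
The plan is to bound $\E r(\hat\sigma,\sigma_0) \le \exp(-(1+o(1))\,nI/K)$ uniformly in $\sigma_0 \in \Theta^0$ by reducing the global mis-match ratio to a per-node Chernoff bound whose optimal tilt is exactly $t^\star = \tfrac{1}{2}\log\frac{a(1-b/n)}{b(1-a/n)}$ and whose optimal exponent is $nI/K$. Writing $C_k = \{j:\sigma_0(j)=k\}$ and $\hat C_k = \{j:\hat\sigma(j)=k\}$ after relabeling $\hat\sigma$ by the best permutation, it suffices by Fubini to bound $\p(\hat\sigma(i)\ne\sigma_0(i))$ for each $i$ at the target rate.

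\textbf{Step 1 (Initial consistency).} First I would show that, on an event of probability $1 - o(\exp(-nI/K))$, the optimizer satisfies $d_H(\sigma_0,\hat\sigma) \le \gamma_n n$ for some $\gamma_n \to 0$, and consequently $|\hat C_k| = (1+o(1))\,n/K$ for every $k$. Peeling over shells $\{\sigma:d_H(\sigma_0,\sigma)\asymp m\}$, Bernstein's inequality controls the centered process $T(\sigma)-T(\sigma_0)$, the mean drift is of order $-m(a-b)/K$, and the combinatorial entropy of the shell is $O(m\log K)$; the hypothesis $nI/(K\log K) \to \infty$ is precisely what makes the signal dominate the entropy on every shell.

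\textbf{Step 2 (Local reduction).} Fix $i$ with $\sigma_0(i)=k_0$ and a target $k\ne k_0$. Since $\hat\sigma$ maximizes $T$, the event $\{\hat\sigma(i)=k\}$ implies that moving $i$ from $k$ to $k_0$ cannot strictly increase $T$, i.e.
\begin{align*}
\sum_{j \in \hat C_{k_0}} A_{i,j} - \lambda|\hat C_{k_0}| \;\le\; \sum_{j \in \hat C_k\setminus\{i\}} A_{i,j} - \lambda(|\hat C_k|-1).
\end{align*}
By Step 1 each symmetric difference $\hat C_k\triangle C_k$ has size at most $\gamma_n n$, so replacing $\hat C_\cdot$ by $C_\cdot$ introduces an additive error of order $\gamma_n n \cdot (a/n) = o(a/K)$, negligible against the signal $(a-b)/K$. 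The mis-classification event is thus contained in
\begin{align*}
\sum_{j\in C_k} A_{i,j} \;-\; \sum_{j\in C_{k_0}\setminus\{i\}} A_{i,j} \;\ge\; \lambda(|C_k|-|C_{k_0}|+1) - o(a/K).
\end{align*}
The two sums are independent, with $(1+o(1))\,n/K$ i.i.d.\ $\Ber(b/n)$ and $\Ber(a/n)$ summands respectively. Applying the exponential Markov inequality at the tilt $t^\star$, the per-factor contribution is
\begin{align*}
\Big(\tfrac{b}{n}e^{t^\star}+1-\tfrac{b}{n}\Big)\Big(\tfrac{a}{n}e^{-t^\star}+1-\tfrac{a}{n}\Big) = \Big(\sqrt{\tfrac{a}{n}\tfrac{b}{n}} + \sqrt{(1-\tfrac{a}{n})(1-\tfrac{b}{n})}\Big)^2 = e^{-I},
\end{align*}
so the product over the $(1+o(1))\,n/K$ repetitions yields $e^{-(1+o(1))\,nI/K}$. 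The choice of $\lambda$ in Equation (\ref{eqn:lambda2}) is exactly what makes the threshold on the right absorb into the tilted bound without loss ($|C_k|\approx|C_{k_0}|$ forces the threshold to be $O(\lambda)=O(1)$, whence $t^\star\lambda = O(1) = o(nI/K)$). Union bounding over the $K-1$ possible targets $k$ loses only $\log K$ in the exponent, absorbed by $nI/(K\log K)\to\infty$. Averaging $\p(\hat\sigma(i)\ne\sigma_0(i))$ over $i\in[n]$ delivers the stated bound.

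\textbf{Main obstacle.} The hardest step is Step 1: producing the weak-consistency estimate with probability $1 - o(e^{-nI/K})$, rather than the easier $o(1)$. A naive union bound over $\Theta^0$ costs $\exp(n\log K)$, which already beats the target exponent, so one must peel carefully by the number of discordant labels and combine Bernstein with a precise computation of $\E(T(\sigma_0)-T(\sigma))$ on each shell, using the specific form of $\lambda$ to guarantee the mean gap is strictly positive. A secondary subtlety is that the sharp constant $(1+o(1))$ in the Chernoff step cannot tolerate any constant-factor slack, which is why Step 1 must output $\gamma_n \to 0$ (rather than just a small fixed constant) and why the peeling has to hold uniformly in $m \le \gamma_n n$ instead of at a single scale.
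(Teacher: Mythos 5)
Your architecture---a weak-consistency step followed by a per-node swap analysis with a sharp Chernoff bound---is genuinely different from the paper's proof, which never localizes to a single node: the paper directly union-bounds the event $\{\exists\sigma: d(\sigma_0,\sigma)=m,\ T(\sigma)\ge T(\sigma_0)\}$ over equivalence classes at each distance $m$ (Proposition \ref{prop::cardinality}), bounds each $\p(T(\sigma)\ge T(\sigma_0))$ by $e^{-(\alpha\wedge\gamma)I}$ with $\alpha\wedge\gamma\gtrsim nm/K$ (Proposition \ref{prop::chernoff} and Lemma \ref{lem::number_black_white}), and reads off the rate from $\E r\le m_0/n+\sum_{m\ge m_0}P_m$ in three regimes. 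Your scheme is essentially the local-refinement analysis of the follow-up algorithmic paper, but applied to the global optimizer without sample splitting, and that is where it breaks.

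The gap is in Step 2. The sets $\hat C_{k_0},\hat C_k$ are functions of the entire matrix $A$, including row $i$, so $\sum_{j\in\hat C_k}A_{i,j}$ is not a sum of Bernoullis over a deterministic index set, and replacing $\hat C_\cdot$ by $C_\cdot$ costs $\sum_{j\in\hat C_\cdot\triangle C_\cdot}A_{i,j}$, a sum over a \emph{data-dependent} set of size at most $\gamma_n n$ that can be adversarially aligned with the neighbors of $i$: its worst case is $\min(\gamma_n n, d_i)$ with $d_i=\sum_j A_{i,j}\approx a$, not $\gamma_n n\cdot(a/n)$. Moreover, even your stated bound $o(a/K)$ would be insufficient for the sharp constant: after tilting by $t^\star\asymp(a-b)/a$, an error $\varepsilon$ in the threshold contributes a factor $e^{t^\star\varepsilon}$, so one needs $\varepsilon=o\big(nI/(Kt^\star)\big)=o\big((a-b)/K\big)$, which $o(a/K)$ does not imply when $(a-b)/a\to0$. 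Repairing this requires either sample splitting (so the index sets are independent of row $i$, as in the follow-up paper) or a uniform control over all subsets of size $\le\gamma_n n$ together with a much smaller $\gamma_n$ than weak consistency alone supplies; neither is routine, and the second runs into a bootstrapping problem since the needed $\gamma_n$ depends on $K$ and $(a-b)/a$. Relatedly, your ``main obstacle'' is misplaced: Step 1 with failure probability $o(e^{-nI/K})$ is exactly what the paper's shell-peeling union bound delivers for $m\ge\gamma_n n$ and is the easier half; the decoupling in Step 2 is the real difficulty, and the paper's global union-bound proof is structured precisely to avoid it.
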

\noindent The proof of the above theorem is provided in Section \ref{sec:proof_theorem}. Due to the similarity, the proof of Theorem \ref{thm:upper_beta} is given in the supplement material \cite{supplement}.

\section{Discussion}\label{sec:discussion}
\subsection{Implications on Sharp Thresholds}
The minimax rates in Theorem \ref{thm:main} immediately imply various sharp thresholds in \citep{mossel2012stochastic, mossel2013proof, mossel2014consistency, hajek2015achieving}. By letting the rates equal to $o(1/n)$ or $o(1)$, we can get critical values for strong and weak consistency respectively, under various settings.
\\
\newline
\textbf{Special Case with $a=o(n)$ and $\frac{a-b}{a}=o(1)$.}
Under this scenario the difference of within-community probability and between-community probability is relatively small. Note that $I=(1+o(1))(a-b)^2/(4an)$, which reduces the minimax result into the form of $\exp(-(1+o(1))(a-b)^2/(4aK))$. In the case of $K=2$, Theorem \ref{thm:main} implies the results from \citep{mossel2012stochastic, mossel2013proof}. With the additional assumption $a,b=n^{o(1/\log\log n)}$, they show that $(a-b)^2/a\rightarrow\infty$ is the necessary and sufficient condition to get consistency. It also agrees with the sharp threshold for strong consistency in \citep{mossel2014consistency}.
\\
\newline
\textbf{Special Case with Probability in the Order of $\log n$.} Consider a more special setting where $a$ and $b$ are in the order of $\log n$. Denote $a=e_1\log n$ and $b=e_2\log n$, with $e_1\geq e_2>0$. Note that $I$ can be written as $I=(1+o(1))(\sqrt{e_1}-\sqrt{e_2})^2\log n/n$. 
\begin{corollary}
Assume $K=n^{o(1)}$. There exists a strongly consistent estimator if $\liminf_{n\rightarrow\infty}\frac{\sqrt{e_1}-\sqrt{e_2}}{\sqrt{K}}> 1$.
\end{corollary}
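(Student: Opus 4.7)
The plan is to specialize the upper bound from Theorem \ref{thm:upper_beta} (or, in the approximately equal case, Theorem \ref{thm:upper}) to the regime $a = e_1 \log n$, $b = e_2 \log n$, and then convert the resulting $o(1/n)$ mis-match bound into an exact recovery statement. The penalized likelihood estimator $\hat\sigma$ of Section \ref{sec:procedure} will serve as the strongly consistent estimator.

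First, I would compute $nI$ asymptotically. Since $a/n = e_1(\log n)/n = o(1)$ and similarly for $b/n$, the identity $I = (1+o(1))(\sqrt{a}-\sqrt{b})^2/n$ noted after Theorem \ref{thm:main} applies, yielding
\begin{equation*}
nI = (1+o(1))(\sqrt{e_1}-\sqrt{e_2})^2 \log n.
\end{equation*}
Next I would verify the hypothesis $nI/(K\log K)\to\infty$ needed by the upper-bound theorem. The assumption $\liminf (\sqrt{e_1}-\sqrt{e_2})/\sqrt{K} > 1$ gives, for all large $n$, a constant $\eta > 0$ such that $(\sqrt{e_1}-\sqrt{e_2})^2/K \geq 1 + \eta$, hence $nI \geq (1+\eta/2) K\log n$. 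Since $K = n^{o(1)}$ implies $\log K = o(\log n)$, we get $nI/(K\log K) \geq (1+\eta/2)(\log n)/\log K \to \infty$ as required.

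Now apply Theorem \ref{thm:upper_beta} (take $\beta=1+o(1)$ in the approximately equal-sized case, or just the general version) to obtain
\begin{equation*}
\sup \E r(\hat\sigma, \sigma) \leq \exp\!\Big(-(1+o(1))\tfrac{nI}{K}\Big) = \exp\!\Big(-(1+o(1))\tfrac{(\sqrt{e_1}-\sqrt{e_2})^2}{K}\log n\Big) = n^{-(1+o(1))(\sqrt{e_1}-\sqrt{e_2})^2/K}.
\end{equation*}
Under the $\liminf$ hypothesis this is at most $n^{-1-\eta/3}$ eventually, in particular $o(1/n)$.

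Finally I would convert this into strong consistency. Because the mis-match ratio $r(\sigma,\hat\sigma)$ takes values in $\{0, 1/n, 2/n, \ldots\}$, the event that $\hat\sigma$ fails to recover $\sigma$ exactly (up to label permutation) coincides with $\{r(\sigma,\hat\sigma) \geq 1/n\}$. Markov's inequality gives $\p(r \geq 1/n) \leq n\,\E r = o(1)$, so $\hat\sigma$ exactly recovers the community assignment with probability tending to one. There is essentially no obstacle here: the work is entirely in verifying the $nI/(K\log K)\to\infty$ condition and in the elementary algebra showing that the exponent $(\sqrt{e_1}-\sqrt{e_2})^2/K$ exceeding $1$ is exactly what makes the exponential bound beat the $1/n$ threshold.
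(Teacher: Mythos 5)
Your proposal is correct and is essentially the argument the paper intends: specialize $nI=(1+o(1))(\sqrt{e_1}-\sqrt{e_2})^2\log n$ into the upper bound of Theorem \ref{thm:upper} (the $\beta=1+o(1)$ reading of the corollary), observe that the hypothesis makes the rate $o(1/n)$, and conclude exact recovery via Markov's inequality together with the discreteness of $r$. The verification that $nI/(K\log K)\to\infty$ under $K=n^{o(1)}$ is the only point requiring care, and you handle it correctly.
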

\noindent For any finite $K$, the recovery threshold is identical to the result in \cite{hajek2015achieving}. For the two-community case with $e_1$ and $e_2$ constants, $\sqrt{e_1}-\sqrt{e_2}>\sqrt{2}$ for exact recovery is proved in \citep{mossel2014consistency}.

\subsection{Computational Feasibility}
The penalized likelihood estimator we propose searches all the possible assignments in the parameter space. It is computationally intractable due to the enormous cardinality of the assignments. However, the idea of reducing global estimation into local testing problem we developed in this paper establishes a guideline for constructing both efficient and optimal algorithms. Along with the \textit{global to local} scheme, the penalized likelihood estimator can be further modified into an node-wise procedure, whose purpose is to assign the label node by node. In this way the exhaustive search over the parameter space is avoided and the computation complexity is dramatically reduced. By exploiting the local idea, in the subsequent paper \citep{gao2015sbmalgo} a two-stage algorithm is proposed to simultaneously achieve the optimal rate and computational feasibility.

\section{Proofs of Main Theorems}\label{sec:proof_theorem}
In this section, we prove two main theorems, Theorem \ref{thm:lower} and Theorem \ref{thm:upper}. The proofs of Theorem \ref{thm:lower_beta} and Theorem \ref{thm:upper_beta} are almost identical to those of Theorem \ref{thm:lower} and Theorem \ref{thm:upper}. We put them in the supplement material \cite{supplement}.

\subsection{Proof of Theorem \ref{thm:lower}}
To get the lower bound for the parameter space $\Theta^0$, we will first construct and analyze a least favorable case in term of the sizes of the communities. In particular the community sizes only take value in $\{\lfloor \frac{n}{K}\rfloor, \lfloor \frac{n}{K}\rfloor+1, \lfloor \frac{n}{K}\rfloor-1\}$, and the number of communities with size $\lfloor\frac{n}{K}\rfloor$ or $\lfloor\frac{n}{K}\rfloor+1$ is of a constant proportion of $K$.

First consider the case with $K\geq 3$. For each pair of $(n,K)$, the integer $K$ can always be decomposed as the sum of three integers: $K=K_1+K_2+K_3$, satisfying \textit{(1)} there exists a constant $\epsilon>0$ such that $\epsilon K<\min(K_1,K_2)\leq\max(K_1, K_2)<(1-\epsilon)K$; and \textit{(2)} either of the following two conditions:
\begin{align}
&\lfloor \frac{n}{K}\rfloor K_1 + (\lfloor \frac{n}{K}\rfloor+1) K_2 + (\lfloor \frac{n}{K}\rfloor-1)K_3 = n;\label{eqn:K1}\\
\text{or }&\lceil \frac{n}{K}\rceil K_1 + (\lceil \frac{n}{K}\rceil+1) K_2 + (\lceil \frac{n}{K}\rceil-1)K_3 = n;\label{eqn:K2}
\end{align}
When $K\geq 3$, it can be shown that such decomposition always exists. Write $n=\lfloor \frac{n}{K}\rfloor K +r$, where $0\leq r\leq K-1$ is an integer. If $r\geq 2\epsilon K$ and $r\leq (1-2\epsilon) K$ for a constant $\epsilon>0$, we have $n=\lfloor \frac{n}{K}\rfloor (K-r)+ (\lfloor \frac{n}{K}\rfloor+1)r$, which satisfies Equation (\ref{eqn:K1}). Otherwise, if $r<2\epsilon K$ for a small positive constant $\epsilon$, write $n=\lfloor \frac{n}{K}\rfloor (K-2\lfloor\frac{K}{3}\rfloor-r)+(\lfloor \frac{n}{K}\rfloor+1) (\lfloor\frac{K}{3}\rfloor+r) +(\lfloor \frac{n}{K}\rfloor -1) \lfloor\frac{K}{3}\rfloor$, which satisfies Equation (\ref{eqn:K1}) for $\epsilon$ sufficient small. If $K-r>2\epsilon K$, we may argue similarly to get Equation (\ref{eqn:K2}).

Recall that we use $n_k$ to denote the size of the $k$-th community for each $k\in[K]$. Without loss of generality, assume there exist $\{K_i\}_{1\leq i \leq 3}$ satisfying Equation (\ref{eqn:K1}) with $\epsilon K<\min(K_1,K_2)\leq\max(K_1, K_2)<(1-\epsilon)K$. Define a subparameter space of $\Theta^0$ as follows,
\begin{align*}
\Theta^{L}(n,K,a,b,\{K_i\})&=\Big\{(\sigma,\{\theta_{i,j}\})\in\Theta^0(n,K,a,b):\big|\{k:n_k=\lfloor \frac{n}{K}\rfloor\}\big|=K_1,\\
&\big|\{k:n_k=\lfloor \frac{n}{K}\rfloor+1\}\big|=K_2, \big|\{k:n_k=\lfloor \frac{n}{K}\rfloor -1\}\big|=K_3\Big\}.
\end{align*}

For the case with $K=2$, we can define the least favorable case in an analogous way. It has a slight different form depending on whether $n/2$ is an integer or not. If $\frac{n}{2}\neq\lfloor\frac{n}{2}\rfloor$, $\Theta^L(n,2,a,b)\triangleq\big\{(\sigma,\{\theta_{i,j}\})\in\Theta^0(n,2,a,b):(n_1,n_2)=(\lfloor\frac{n}{2}\rfloor, \lceil\frac{n}{2}\rceil)\big\}$. Otherwise, $\Theta^L(n,2,a,b)\triangleq\big\{(\sigma,\{\theta_{i,j}\})\in\Theta^0(n,2,a,b):(n_1,n_2)\in\{(\frac{n}{2}, \frac{n}{2}), (\frac{n}{2}+1, \frac{n}{2}-1)\}\big\}$.

Note that $\Theta^L$ is homogeneous and closed under permutation. Compared with $\Theta^0$, $\Theta^L$ is quite small, enough for us to do some lower bound analysis. On the other hand, it is large enough to match the lower bound in Equation (\ref{eqn:lower_beta}).

\begin{lemma}\label{lem::multiple_test}
Let $\tau$ be the uniform prior over all the elements in $\Theta^L$. For the first node, define the local Bayesian risk to be $B_\tau(\hat\sigma(1))=\frac{1}{|\Theta^L|}\sum_{\sigma\in\Theta^L}\E r(\sigma(1),\hat\sigma(1))$. Then there exists a constant $\epsilon>0$ such that
\begin{align*}
B_\tau(\hat\sigma(1))\geq \epsilon\p\Big(\sum_{u=1}^{\lfloor n/K\rfloor}X_u\geq \sum_{u=1}^{\lfloor n/K\rfloor}Y_u\Big),
\end{align*}
where $X_i\stackrel{iid}{\sim}\Ber(\frac{b}{n})$, $Y_i\stackrel{iid}{\sim}\Ber(\frac{a}{n})$, for $i=1,2,\ldots,\lfloor\frac{n}{K}\rfloor$, and $\{X_i\}_{i=1}^{\lfloor\frac{n}{K}\rfloor}\perp\{Y_i\}_{i=1}^{\lfloor\frac{n}{K}\rfloor}$.
\end{lemma}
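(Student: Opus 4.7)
The plan is to reduce the local Bayes risk to a genie-aided two-point test between two small communities, and then lower bound the Bayes error of that test by a sum-of-Bernoullis probability via Neyman-Pearson.

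First, because $K_1 \geq \epsilon K$ in the construction of $\Theta^L$, under the uniform prior $\tau$ the event $\mathcal{E}$ that node $1$ lies in a community of size $m := \lfloor n/K \rfloor$ has probability at least a positive constant; furthermore, since $K_1 \geq 2$ for sufficiently large $K$ (and the case of very small $K$ is handled analogously by pairing a size-$m$ community with a size-$(m+1)$ one), I can fix two distinct candidate communities $C_k, C_{k'}$ both of size $m$. This contributes a factor of $\tau(\mathcal{E})$ to the constant $\epsilon$ in the final bound.

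Next, on $\mathcal{E}$ I would apply a genie argument: revealing the labels $\sigma(j)$ for all $j \neq 1$ to the estimator can only decrease the Bayes risk. The remaining problem is a two-point test between $\sigma(1) = k$ and $\sigma(1) = k'$ with uniform prior (by closure of $\Theta^L$ under permutation). The sufficient statistic consists of the edges from node $1$ to $C_k \cup C_{k'}$: under $\sigma(1) = k$ these are distributed as $m-1$ iid $\Ber(a/n)$ to $C_k \setminus \{1\}$ together with $m$ iid $\Ber(b/n)$ to $C_{k'}$, with the roles swapped under $\sigma(1) = k'$. By Neyman-Pearson, the Bayes error of the optimal likelihood-ratio test equals $\tfrac{1}{2}\bigl[\p_k(\hat\sigma(1) = k') + \p_{k'}(\hat\sigma(1) = k)\bigr]$, and a short coupling between $\Bin(m-1, \cdot)$ and $\Bin(m, \cdot)$ shows this is at least a constant multiple of $\p\bigl(\sum_{u=1}^{m} X_u \geq \sum_{u=1}^{m} Y_u\bigr)$, matching the form claimed.

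The main obstacle is handling the permutation-minimization in the local loss $r(\sigma(1), \hat\sigma(1))$, which allows $\hat\sigma$ to relabel communities arbitrarily. I would resolve this by exploiting closure of $\Theta^L$ under permutation: for each pair $(\sigma, \sigma^{(j)})$ obtained by swapping labels of nodes $1$ and $j \in C_{k'}$, every permutation $\delta \in \Delta$ either fixes the set $\{k, k'\}$ setwise (in which case the local loss at node $1$ reduces to the indicator of misclassification between $k$ and $k'$) or exchanges them; in the latter case, $\delta$ reverses $\hat\sigma$'s inferred community while the pair $(\sigma, \sigma^{(j)})$ simultaneously reverses the truth, so the contribution to the Bayes risk is unchanged. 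Hence the permutation freedom cannot reduce the local Bayes risk below the two-point testing bound, and the lemma follows after collecting constants.
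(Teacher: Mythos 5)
Your overall strategy---restrict to a constant-prior event, reduce to a two-point Bayes test for node $1$, and argue that the permutation freedom in the local loss cannot help---is the same skeleton as the paper's proof. But the specific pair of hypotheses you test is flawed, and the flaw is exactly the point that the construction of $\Theta^L$ is designed to handle. You condition on node $1$ lying in a community of size $m=\lfloor n/K\rfloor$ and test $\sigma(1)=k$ against $\sigma(1)=k'$ for two size-$m$ communities, holding all other labels fixed. Under the alternative, community $k$ has size $m-1$ and community $k'$ has size $m+1$, so the multiset of community sizes changes from the required profile $(K_1,K_2,K_3)$ of sizes $\lfloor n/K\rfloor,\lfloor n/K\rfloor+1,\lfloor n/K\rfloor-1$ to $(K_1-2,K_2+1,K_3+1)$. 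The alternative configuration is therefore \emph{not} in $\Theta^L$, the uniform prior $\tau$ puts zero mass on it, and the genie-aided two-point Bayes risk you compute is $0$ rather than the claimed testing error. Your fallback in the last paragraph---pairing $\sigma$ with $\sigma^{(j)}$ obtained by swapping nodes $1$ and $j\in C_{k'}$---does stay inside $\Theta^L$, but it produces a pair at Hamming distance $2$, and the resulting likelihood-ratio test involves the edges of both nodes $1$ and $j$ (roughly $4m$ Bernoullis), which would degrade the exponent by a factor of $2$ and fail to match the target rate.

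The paper's fix is to condition instead on node $1$ lying in a community of size $\lfloor n/K\rfloor+1$ (the subspace $\Theta^L_1$, which still has constant prior mass since $K_2\geq\epsilon K$) and to move node $1$ into one of the $K_1$ communities of size $\lfloor n/K\rfloor$. Then the two affected sizes simply swap ($\lfloor n/K\rfloor+1\leftrightarrow\lfloor n/K\rfloor$), the profile $(K_1,K_2,K_3)$ is preserved, and the alternative $\sigma[\sigma_0]$ lies in $\Theta^L$ at Hamming distance $1$. As a bonus, both competing neighborhoods $J_0$ and $J_1$ then have exactly $\lfloor n/K\rfloor$ nodes, so the posterior-mode test compares $\sum_{u\in J_0}A_{1,u}$ with $\sum_{u\in J_1}A_{1,u}$ and yields the stated probability directly, with no need for your $\Bin(m-1,\cdot)$ versus $\Bin(m,\cdot)$ coupling. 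Your treatment of the label-permutation issue is in the right spirit (and somewhat more explicit than the paper's one-line remark that distance-$1$ pairs reduce the local loss to a zero--one loss), but the proof does not go through as written until the hypothesis pair is corrected.
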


Lemma \ref{lem::multiple_test} shows the lower bound is only involved with $2\lfloor \frac{n}{K}\rfloor$ Bernoulli random variables, whose success probability is either $a/n$ or $b/n$. Recall that $a/n$ is the smallest within-community probability and $b/n$ is the largest between-community probability. The lower bound here will be determined by testing two probability measures. In $\Theta^L$, the most difficult case is testing two assignment vectors with Hamming distance 1. The difference of their probability measures is exactly the difference between probability measures of $X$ and $Y$.

\begin{lemma}\label{lem::binomial_tail}
Let $n'=\lfloor \frac{n}{K}\rfloor$. Define $Z_i=X_i - Y_i$ with $\{X_{i}\}\stackrel{iid}{\sim}\Ber( \frac{b}{n} ),\;\{Y_{i}\}\stackrel{iid}{\sim}\Ber( \frac{a}{n} )$, and $\{X_i\}\perp\{Y_i\}$, for $i=1,2,\ldots,n'$. If $\frac{nI}{K}\rightarrow\infty$, we have
\begin{align*}
\p\left(\frac{1}{n'}\sum_{i=1}^{n'}Z_{i}>0\right)\geq \exp\left(-(1+o(1))nI/K\right).
\end{align*}
In addition, if $nI/K=O(1)$, then $\p\left(\frac{1}{n'}\sum_{i=1}^{n'}Z_{i}>0\right)\geq c$ for some positive constant $c>0$.
\end{lemma}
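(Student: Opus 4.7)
The plan is to derive the sharp exponential lower bound through an exponential-tilt change-of-measure argument at the conjugate parameter
\begin{equation*}
t^\star = \tfrac{1}{2}\log\frac{a(1-b/n)}{b(1-a/n)},
\end{equation*}
which, as already identified in Section~\ref{sec:procedure}, is the minimizer of the MGF $M(t) := \E e^{tZ_1} = (p e^t + 1 - p)(q e^{-t} + 1 - q)$ with $p = b/n$ and $q = a/n$. The first task is the algebraic identity
\begin{equation*}
M(t^\star) = \bigl(\sqrt{pq} + \sqrt{(1-p)(1-q)}\bigr)^2 = e^{-I},
\end{equation*}
which follows by substituting $e^{t^\star} = \sqrt{q(1-p)/[p(1-q)]}$ into $M$ and observing that the cross-terms collapse into a perfect square; this identifies $I$ as the exact Cram\'er rate of $\sum_i Z_i$.

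Next I would introduce i.i.d.\ tilted variables $\tilde Z_1,\ldots,\tilde Z_{n'}$ under a measure $\tilde\p$ via $\tilde\p(\tilde Z = z) := e^{t^\star z}\p(Z = z)/M(t^\star)$ for $z\in\{-1,0,1\}$. A short computation gives $\tilde\p(\tilde Z=1)=\tilde\p(\tilde Z=-1)=\alpha$ with $\alpha := \sqrt{pq(1-p)(1-q)}/M(t^\star)$, so the tilted law is symmetric: $\tilde\E \tilde Z_1 = 0$ and $\tilde\sigma^2 := \tilde{\mathrm{Var}}(\tilde Z_1) = 2\alpha$. The standard change-of-measure identity, combined with restriction to $\{0 < \sum_i \tilde Z_i \leq M_n\}$ for a threshold $M_n>0$ to be chosen, yields
\begin{equation*}
\p\Big(\textstyle\sum_{i=1}^{n'} Z_i > 0\Big) = e^{-n'I}\,\tilde\E\Big[e^{-t^\star \sum_i \tilde Z_i}\mathbf 1\bigl\{\textstyle\sum_i \tilde Z_i > 0\bigr\}\Big] \geq e^{-n'I - t^\star M_n}\,\tilde\p\Big(0 < \textstyle\sum_i \tilde Z_i \leq M_n\Big).
\end{equation*}

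It then remains to produce a sub-exponential lower bound for the tilted probability and to verify that the $t^\star M_n$ correction is absorbed into the $(1+o(1))$ exponent. When $n'\tilde\sigma^2 \to \infty$ I would take $M_n = \sqrt{n'\tilde\sigma^2}$ and invoke the Berry--Esseen theorem for the bounded mean-zero symmetric $\tilde Z_i$; since $\tilde\E|\tilde Z|^3/\tilde\sigma^3 = 1/\tilde\sigma$, the CLT error is of order $1/\sqrt{n'\tilde\sigma^2} = o(1)$, so $\tilde\p(0 < \sum_i \tilde Z_i \leq M_n) \to \Phi(1) - 1/2 > 0$; if $n'\tilde\sigma^2$ only stays bounded, a local-CLT or direct lattice estimate instead lower-bounds $\tilde\p(\sum_i \tilde Z_i = 1) \gtrsim 1/\sqrt{1 \vee n'\tilde\sigma^2}$, and choosing $M_n=1$ suffices. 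The hard part will be showing $t^\star M_n + \tfrac12 \log(n'\tilde\sigma^2 \vee 1) = o(n'I)$ uniformly across the paper's full range of regimes---from $a,b$ constant to $a \asymp n$, and from $a \asymp b$ to $a/b \to \infty$---which I expect to handle via a case split on the ratio $a/b$ together with the elementary bounds $t^\star \leq \tfrac12 \log(a/b) + O(1)$ and $\tilde\sigma^2 \gtrsim \sqrt{pq} \wedge \sqrt{(1-p)(1-q)}$. For the second assertion, when $nI/K = O(1)$, the same change-of-measure inequality combined with the symmetry observation $\tilde\p(\sum_i \tilde Z_i > 0) = (1 - \tilde\p(\sum_i \tilde Z_i = 0))/2 \geq c > 0$ delivers the constant lower bound directly, since $e^{-n'I}$ is bounded away from zero in this regime.
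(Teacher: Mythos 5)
Your proposal is correct and follows essentially the same route as the paper's proof: exponential tilting at $t^\star$, the identity $M(t^\star)=e^{-I}$, the symmetric mean-zero tilted law, restriction to a window whose width makes the $t^\star M_n$ correction $o(n'I)$, and the same case split on $a\asymp b$ versus $a/b\to\infty$ to verify that condition (your requirement $t^\star\sqrt{n'\tilde\sigma^2}=o(n'I)$ is exactly the paper's $I/(t^\star\sqrt{V})\to\infty$). The only cosmetic difference is that you control the tilted probability via Berry--Esseen or a local CLT, where the paper uses Chebyshev's inequality together with the symmetry of the tilted sum, which suffices and is slightly more elementary.
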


Lemma \ref{lem::binomial_tail} provides an explicit expression for the lower bound. The proof mainly follows the proof of Cramer-Chernoff Theorem \cite{van2000asymptotic}. The general Cramer-Chernoff Theorem gives a lower bound for the tail probability that the sum of random variables deviates from its mean. Usually it is for the case where these random variables are from a distribution independent of the sample size. In our setting we allow $a$ and $b$ to depend on $n'$.

\begin{proof}[Proof of Theorem \ref{thm:lower}]
Since $\Theta^L\subset\Theta^0$, we have $\inf_{\hat\sigma}\sup_{\Theta^0}\E r(\sigma,\hat\sigma)\geq\inf_{\hat\sigma}\sup_{\sigma\in\Theta^L}\E r(\sigma,\hat\sigma)$. Due to the fact that Bayes risk always lower bounds the global risk we have $\inf_{\hat\sigma}\sup_{\sigma\in\Theta^L}\E r(\sigma,\hat\sigma)\geq \inf_{\hat\sigma}\sup_{\sigma\in\Theta^L}B_\tau(\hat\sigma)$. By the fact that $\Theta^L$ is a homogeneous parameter space closed under permutation for both $K\geq 3$ and $K=2$, Lemma \ref{lem:global_local} implies $\inf_{\hat\sigma}\sup_{\sigma\in\Theta^L}B_\tau(\hat\sigma)=\inf_{\hat\sigma}\sup_{\sigma\in\Theta^L}B_\tau(\hat\sigma(1))$. Thus
\begin{align*}
\inf_{\hat\sigma}\sup_{\Theta^0}\E r(\sigma,\hat\sigma)\geq\inf_{\hat\sigma}\sup_{\sigma\in\Theta^L}B_\tau(\hat\sigma(1)),
\end{align*}
which, together with Lemma \ref{lem::multiple_test} and Lemma \ref{lem::binomial_tail}, implies Equation (\ref{eqn:lower}) of Theorem \ref{thm:lower}.
\end{proof}

\subsection{Proof of Theorem \ref{thm:upper}}\label{subsec:proof_upper}
Recall that $\Delta$ is the set of all permutations from $[K]$ to $[K]$. For an arbitrary $\sigma\in\Theta^0$, define $\Gamma(\sigma)$ as the equivalent class of $\sigma$ with $\Gamma(\sigma)=\{\sigma':\exists\delta\in\Delta\text{, s.t. }\sigma'=\delta\circ\sigma\}$.
We use the notation $\Gamma$ as a general reference for equivalent class, and $\{\Gamma\}$ as the set consisting of all the possible equivalent classes with respect to $\Theta^0$. For any $\sigma_{1},\sigma_{2}\in\Theta$, define the distance between $\sigma_{1}$ and $\sigma_{2}$ as
\begin{align*}
d(\sigma_{1},\sigma_{2})\triangleq\inf_{\sigma'_{2}\in\Gamma(\sigma_{2})}d_H(\sigma_{1},\sigma'_{2})=\inf_{\sigma'_{1}\in\Gamma(\sigma_{1}),\sigma'_{2}\in\Gamma(\sigma_{2})}d_H(\sigma'_{1},\sigma'_{2}).
\end{align*}
Here we view $d(\cdot,\cdot)$ as a distance between the equivalent class $\Gamma(\sigma_1)$ and $\Gamma(\sigma_2)$. Accordingly the mis-match ratio $r(\sigma,\hat\sigma)$ is exactly equal to
\begin{align*}
r(\sigma,\hat\sigma)=\frac{1}{n}d(\sigma,\hat\sigma).
\end{align*}

In the following sections we denote the true assignment by $\sigma_0$. Define 
\begin{align}
P_m=\p\big(\exists \sigma\in \Theta^0: d(\sigma_0,\sigma)= m\text{ and }T(\sigma)\geq T(\sigma_0)\big)\label{eqn:P}
\end{align}
for any integer $m$ with $0<m<n$. The key step is to get a tight bound of the probability $\p \big(T(\sigma)\geq T(\sigma_{0})\big)$ for one fixed assignment $\sigma$ satisfying $d(\sigma,\sigma_{0})=m$. Let $\{n_k\}$ to be the size of communities under the truth $\sigma_0$. Without loss of generality, assume $\sigma_0(i)=k$ for any $i\in[\sum_{j\leq k-1}n_j+1,\sum_{j\leq k}n_j]$. Then the value of $2\sum_{i<j}A_{i,j}{\{\sigma_0(i)=\sigma_0(j)\}}$ is just to add up all the entries in the $K$ diagonal blocks of the adjacency matrix $A$. It is illustrated by color plates in the Figure \ref{fig_color_plate}. The gray parts represent the within-community connections, and blank parts represent the between-community connections. It is obvious to see that $2\sum_{i<j}A_{i,j}{\{\sigma_0(i)=\sigma_0(j)\}}$ precisely includes all the gray parts, i.e., all the Bernoulli random variables with success probability $\frac{a}{n}$ in the adjacency matrix.

\begin{figure}[ht]\centering
\begin{minipage}{0.49\textwidth}
\includegraphics[trim = 35mm 25mm 25mm 20mm, clip, width=0.95\textwidth]{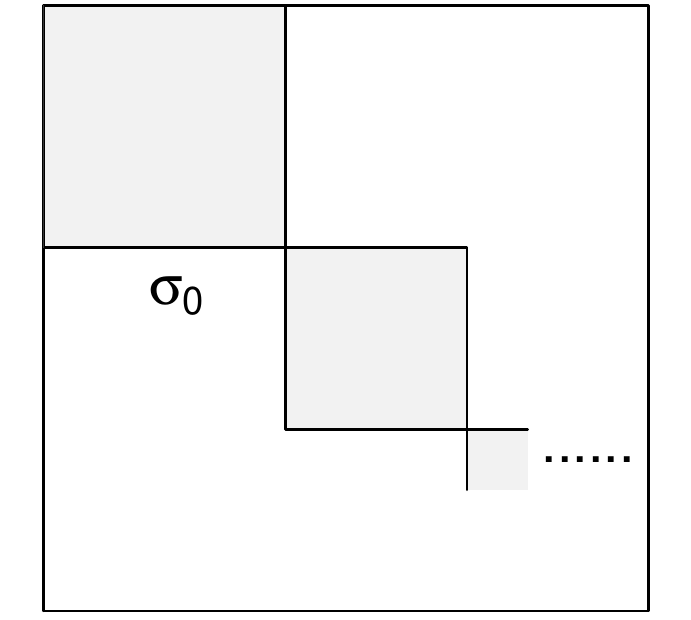}
\end{minipage}
\begin{minipage}{0.49\textwidth}
\includegraphics[trim = 35mm 25mm 25mm 20mm, clip, width=0.95\textwidth]{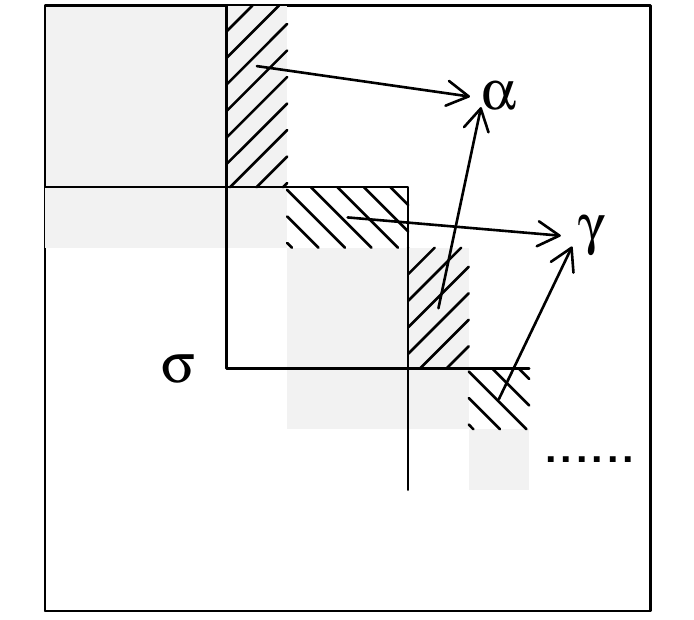}
\end{minipage}
\end{figure}
\begin{figure}[ht]
\caption{Each gray block stands for all the within-community connnection in one single community. The areas inside the squares are all the $A_{i,j}$ entries summed up. \textbf{Left:} For $2\sum_{i<j}A_{i,j}{\{\sigma_0(i)=\sigma_0(j)\}}$, the squares exactly overlap with the gray regions. \textbf{Right:} For $2\sum_{i<j}A_{i,j}{\{\sigma(i)=\sigma(j)\}}$, there would be some differences between the squares and gray parts, which are labeled as $\alpha$ or $\gamma$ according to their relative positions.} \label{fig_color_plate}
\end{figure}

When $d_H(\sigma,\sigma_0)=d(\sigma,\sigma_0)=m$, by comparing the two color plates in Figure \ref{fig_color_plate}, we can clearly see where the difference $\sum_{i<j}A_{i,j}{\{\sigma_0(i)=\sigma_0(j)\}}-\sum_{i<j}A_{i,j}{\{\sigma(i)=\sigma(j)\}}$ lies in. Note that
\begin{align*}
\sum_{i<j}A_{i,j}1_{\{\sigma(i)=\sigma(j)\}}-\sum_{i<j}A_{i,j}1_{\{\sigma_0(i)=\sigma_0(j)\}}&=\sum_{i<j}A_{i,j}1_{\{\sigma(i)=\sigma(j)\}}1_{\{\sigma_0(i)\neq \sigma_0(j)\}}\\&-\sum_{i<j}A_{i,j}1_{\{\sigma(i)\neq\sigma(j)\}}1_{\{\sigma_0(i)=\sigma_0(j)\}}.
\end{align*}
Define $\alpha(\sigma;\sigma_0)=|\{(i,j):i<j,\;\sigma_0(i)=\sigma_0(j)\text{ and }\sigma(i)\neq\sigma(j)\}|$, and $\gamma(\sigma;\sigma_0)=|\{(i,j):i<j,\;\sigma_0(i)\neq\sigma_0(j)\text{ and }\sigma(i)=\sigma(j)\}|$. We use the notations $\alpha$ and $\gamma$ for short when there is no ambiguity, then
\begin{align*}
\sum_{i<j}1_{\{\sigma(i)=\sigma(j)\}}-\sum_{i<j}1_{\{\sigma_0(i)=\sigma_0(j)\}}=\alpha-\gamma.
\end{align*}
The following proposition is helpful to study $P_m$ defined in Equation (\ref{eqn:P}).

\begin{proposition}\label{prop::chernoff}
Let $\sigma\in\Theta^0$ be an arbitrary assignment satisfying $d(\sigma,\sigma_{0})=m$, where $0<m<n$ is a positive integer. Then
\begin{align*}
\p \big(T(\sigma)\geq T(\sigma_{0})\big)&\leq\p\Big(\sum_{i=1}^{\gamma}X_{i}-\sum_{i=1}^{\alpha}Y_{i}\geq \lambda(\gamma-\alpha)\bigg|X_{i}\stackrel{iid}{\sim} \text{Ber}( \frac{b}{n} ),Y_{i}\stackrel{iid}{\sim} \text{Ber}( \frac{a}{n} )\Big)\\
&\leq \exp\left(-(\alpha\wedge\gamma) I\right),
\end{align*}
for $\lambda$ defined in Equation (\ref{eqn:lambda2}).\end{proposition}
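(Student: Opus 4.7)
The plan is to reduce the event $T(\sigma) \geq T(\sigma_0)$ to an inequality on independent Bernoulli sums, then control that inequality via a Chernoff bound evaluated at the specific exponent $t^\star$ for which the choice of $\lambda$ in (\ref{eqn:lambda2}) is engineered.

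First I would rewrite
\[T(\sigma) - T(\sigma_0) = \sum_{i<j}(A_{i,j} - \lambda)\left(1_{\{\sigma(i)=\sigma(j)\}} - 1_{\{\sigma_0(i)=\sigma_0(j)\}}\right),\]
so that only pairs on which the two indicators disagree contribute. By definition, these split into $\gamma$ ``type-$\gamma$'' pairs with $\sigma_0(i)\neq\sigma_0(j)$ but $\sigma(i)=\sigma(j)$, each contributing $+(A_{i,j}-\lambda)$, and $\alpha$ ``type-$\alpha$'' pairs with $\sigma_0(i)=\sigma_0(j)$ but $\sigma(i)\neq\sigma(j)$, each contributing $-(A_{i,j}-\lambda)$. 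Since $\sigma_0$ determines $\theta_{i,j}$ exactly in $\Theta^0$, the type-$\gamma$ variables are i.i.d.\ $\Ber(b/n)$, the type-$\alpha$ variables are i.i.d.\ $\Ber(a/n)$, and the two collections are independent because their underlying index pairs are disjoint. Hence $T(\sigma)\geq T(\sigma_0)$ is equivalent to $\sum_{i=1}^\gamma X_i - \sum_{i=1}^\alpha Y_i \geq \lambda(\gamma-\alpha)$, which yields the first inequality.

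For the second inequality I set $A(t) = (b/n)e^t + 1 - b/n$ and $B(t) = (a/n)e^{-t} + 1 - a/n$ and apply the exponential Markov bound at $t=t^\star$:
\[\p\left(\sum_{i=1}^\gamma X_i - \sum_{i=1}^\alpha Y_i \geq \lambda(\gamma-\alpha)\right) \leq e^{-t^\star\lambda(\gamma-\alpha)}\,A(t^\star)^\gamma B(t^\star)^\alpha.\]
A direct expansion, using $e^{t^\star} = \sqrt{a(1-b/n)/(b(1-a/n))}$, gives the Bhattacharyya-type identity $A(t^\star)B(t^\star) = \left(\sqrt{(a/n)(b/n)} + \sqrt{(1-a/n)(1-b/n)}\right)^2 = e^{-I}$, and by inspection the formula for $\lambda$ in (\ref{eqn:lambda2}) yields $A(t^\star)e^{-t^\star\lambda} = (A(t^\star)B(t^\star))^{w} = e^{-Iw}$ and $B(t^\star)e^{t^\star\lambda} = (A(t^\star)B(t^\star))^{1-w} = e^{-I(1-w)}$, where $w\in[0,1]$ (and $w=1/2$ in the $K=2$ case).

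Finally I would split into two cases. If $\gamma\geq\alpha$, factor $A(t^\star)^\gamma B(t^\star)^\alpha = (A(t^\star)B(t^\star))^\alpha A(t^\star)^{\gamma-\alpha}$ and absorb the penalty to obtain
\[e^{-t^\star\lambda(\gamma-\alpha)}A(t^\star)^\gamma B(t^\star)^\alpha = (A(t^\star)B(t^\star))^\alpha (A(t^\star)e^{-t^\star\lambda})^{\gamma-\alpha} = e^{-I\alpha}e^{-Iw(\gamma-\alpha)} \leq e^{-I\alpha};\]
the symmetric factorization handles $\alpha\geq\gamma$, giving $(A(t^\star)B(t^\star))^\gamma (B(t^\star)e^{t^\star\lambda})^{\alpha-\gamma} \leq e^{-I\gamma}$. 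In either case the bound is $e^{-(\alpha\wedge\gamma)I}$. The main obstacle is the algebraic bookkeeping: confirming the identity $A(t^\star)B(t^\star)=e^{-I}$ via the Bhattacharyya expansion, and verifying that the weighted-log form of $\lambda$ in (\ref{eqn:lambda2}) forces the two ``excess'' factors $A(t^\star)e^{-t^\star\lambda}$ and $B(t^\star)e^{t^\star\lambda}$ to be simultaneously bounded by one, so that the imbalance $|\alpha-\gamma|$ never degrades the exponent.
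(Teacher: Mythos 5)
Your proof is correct and follows essentially the same route as the paper: reduce $T(\sigma)\geq T(\sigma_0)$ to the Bernoulli-sum event, apply the exponential Markov bound at $t^\star$, and use the identities $A(t^\star)B(t^\star)=e^{-I}$ together with the cancellation built into $\lambda$ to get the exponent $(1-w)\alpha+w\gamma\geq\alpha\wedge\gamma$. The only cosmetic differences are that you obtain the first inequality directly from the homogeneity of $\Theta^0$ (the paper routes it through a small coupling proposition so the same bound also covers the inhomogeneous space $\Theta$), and you split on the sign of $\gamma-\alpha$ where the paper writes a single factorization whose second factor equals one.
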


Note that the value of $\gamma$ depends on $\sigma$ and $\sigma_0$. Lemma \ref{lem::number_black_white} provides a lower bound on $\gamma$ for each $m$.

\begin{lemma}\label{lem::number_black_white}
Let $\sigma\in\Theta^0$ be an arbitrary assignment satisfying $d(\sigma,\sigma_{0})=m$, where $0<m<n$ is a positive integer. Then 
\begin{align*}
\alpha(\sigma;\sigma_0)\wedge\gamma(\sigma;\sigma_0)\geq
\begin{cases}
\frac{(1-\eta)nm}{K}-m^2, \text{ if }m\leq \frac{n}{2K},\\
\frac{2(1-\eta)nm}{9K}, \text{ if }m> \frac{n}{2K}.
\end{cases}
\end{align*}
\end{lemma}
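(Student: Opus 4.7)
The strategy is to bound $\alpha(\sigma;\sigma_0)$; the same bound on $\gamma$ follows automatically, since $\gamma(\sigma;\sigma_0) = \alpha(\sigma_0;\sigma)$ and $\Theta^0$ is symmetric in the two assignments. Without loss of generality, relabel $\sigma$ so that $d_H(\sigma,\sigma_0) = d(\sigma,\sigma_0) = m$. Set $C_k = \sigma_0^{-1}(k)$, $D_l = \sigma^{-1}(l)$, $n_{k,l} = |C_k \cap D_l|$, and $t_k = n_k - n_{k,k}$, so $\sum_k t_k = m$ and $n_k, |D_l| \in [(1-\eta)n/K,(1+\eta)n/K]$ with $\eta = o(1)$ coming from the $(1+o(1))$ community-size bound in $\Theta^0$.

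The basic ingredient is a mixed-pair count: if $i \in C_k$ is misclassified and $j \in C_k$ is correctly classified by $\sigma$, then $\sigma_0(i)=\sigma_0(j)$ but $\sigma(i)\neq\sigma(j)=k$, so the pair contributes to $\alpha$. This gives $\alpha \geq \sum_k t_k(n_k - t_k)$. Since each $t_k \leq m$, one has $\sum_k t_k^2 \leq m^2$, which together with $n_k \geq (1-\eta)n/K$ yields $\alpha \geq (1-\eta)nm/K - m^2$. This is the first case of the lemma and is informative exactly when $m \leq n/(2K)$.

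For $m > n/(2K)$ the above can be vacuous, so I bound each $\alpha_k = \tfrac{1}{2}\bigl(n_k^2 - \sum_l n_{k,l}^2\bigr)$ separately. Using $\sum_{l\neq k} n_{k,l}^2 \leq M_k^- t_k$ with $M_k^- := \max_{l\neq k} n_{k,l}$, one obtains $\alpha_k \geq \tfrac{t_k}{2}\bigl(n_k + n_{k,k} - M_k^-\bigr)$. If $M_k^- \leq n_{k,k}$, the bracket is at least $n_k$ and $\alpha_k \geq n_k t_k/2$. Otherwise $M_k^- = n_{k,l^*}$ for some $l^*\neq k$, and the minimum-Hamming optimality of the identity alignment, applied to the transposition $(k,l^*)$, gives the swap inequality $n_{k,k}+n_{l^*,l^*} \geq n_{k,l^*}+n_{l^*,k}$, hence $n_{l^*,l^*} \geq M_k^- - n_{k,k}$. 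Combined with the size cap $n_{l^*,l^*}+M_k^- \leq |D_{l^*}| \leq (1+\eta)n/K$, this yields $M_k^- \leq \tfrac{1}{2}\bigl(n_{k,k} + (1+\eta)n/K\bigr)$. Substituting and using $n_k \geq (1-\eta)n/K$, $n_{k,k} = n_k - t_k$, and $t_k \leq n_k$ gives $n_k + n_{k,k} - M_k^- \geq (1-3\eta)n/(2K)$, so $\alpha_k \geq (1-3\eta)n t_k/(4K)$. Since both $\tfrac{1}{2}$ and $\tfrac{1}{4}$ exceed $\tfrac{2}{9}$, for $\eta = o(1)$ both cases satisfy $\alpha_k \geq 2(1-\eta)n t_k/(9K)$, and summing over $k$ delivers the second case.

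The main obstacle is the sub-case $M_k^- > n_{k,k}$, where community $C_k$ has leaked more mass into some off-diagonal block $D_{l^*}$ than it retains on the diagonal. Here one must invoke both the swap inequality (a consequence of minimum-Hamming, pushing $n_{l^*,l^*}$ up) and the global size cap from $\Theta^0$ (pushing $n_{l^*,l^*}$ down) to control $M_k^-$; dropping either ingredient makes the estimate collapse, and this tension between the two constraints is what pins down the constant $2/9$.
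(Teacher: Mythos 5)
Your proof is correct, and while the first case and the overall skeleton (bound $\alpha$, get $\gamma$ by symmetry, reduce to per-community counts) coincide with the paper's, your treatment of the $m>\frac{n}{2K}$ case takes a genuinely different and somewhat sharper route. The paper first uses the transposition argument to establish a \emph{uniform} cap $m_{k,k'}\leq \frac{2(1+\eta_1)n}{3K}$ on every off-diagonal block, and then solves the worst-case quadratic minimization of $n_k'^2-\sum_{k'}m_{k,k'}^2$ subject to that cap, splitting on whether $m_k$ itself exceeds the cap; the constant $2/9$ falls out of the sub-case where the misclassified mass fills two capped blocks. You instead write $\alpha_k\geq \frac{t_k}{2}\bigl(n_k+n_{k,k}-M_k^-\bigr)$ and control only the single largest off-diagonal entry $M_k^-$, combining the swap inequality $n_{k,k}+n_{l^*,l^*}\geq n_{k,l^*}+n_{l^*,k}$ with the column-sum cap $n_{l^*,l^*}+n_{k,l^*}\leq |D_{l^*}|$ to get $M_k^-\leq\frac{1}{2}\bigl(n_{k,k}+(1+\eta)\frac{n}{K}\bigr)$; this yields the per-community constant $1/4$ rather than $2/9$, so the stated bound follows a fortiori. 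Both arguments hinge on the same two ingredients (minimum-Hamming optimality under transpositions and the near-equal community sizes of $\Theta^0$), but yours localizes the swap inequality to the offending block and avoids the case analysis on $m_k$, buying a cleaner derivation and a slightly better constant; the paper's version has the advantage of generalizing more directly to the imbalanced space $\Theta$ (cf.\ Lemma \ref{lem::number_black_white_extend}), where the uniform cap formulation is reused. One cosmetic remark: your closing sentence attributes the constant $2/9$ to the tension you exploit, but your own argument actually pins down $1/4$ --- $2/9$ is simply the (weaker) constant the lemma demands.
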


Lemma \ref{lem::number_black_white}, together with Proposition \ref{prop::chernoff}, immediately implies an upper bound on $\p \big(T(\sigma)\geq T(\sigma_{0})\big)$ for each given $\sigma$.

\begin{lemma}\label{lem::probability_likelihood}
Let $\sigma\in\Theta^0$ be an arbitrary assignment satisfying $d(\sigma,\sigma_{0})=m$, where $0<m<n$ is a positive integer. There exists a positive sequence $\eta\rightarrow 0$, independent of the choice of $\sigma$, such that
\begin{align*}
\p \big(T(\sigma)\geq T(\sigma_{0})\big)\leq
\begin{cases}
\exp\big(-\frac{(1-\eta)nmI}{K}+m^2I\big)\text{, if }m\leq \frac{n}{2K},\\
\exp\big(-\frac{2(1-\eta)nmI}{9K}\big)\text{, if }m\geq \frac{n}{2K},
\end{cases}
\end{align*}
for $\lambda$ defined in Equation (\ref{eqn:lambda2}).
\end{lemma}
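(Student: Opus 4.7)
The plan is to assemble the bound directly from the two ingredients we already have: the Chernoff-type bound in Proposition \ref{prop::chernoff}, which controls $\p(T(\sigma) \geq T(\sigma_0))$ in terms of the overlap quantities $\alpha(\sigma;\sigma_0)$ and $\gamma(\sigma;\sigma_0)$, and the combinatorial lower bound on $\alpha \wedge \gamma$ provided by Lemma \ref{lem::number_black_white}. Since both results are already stated uniformly over $\sigma \in \Theta^0$ with $d(\sigma,\sigma_0) = m$, no further work on the dependence on $\sigma$ will be needed beyond a clean substitution.

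First, I would fix an arbitrary $\sigma \in \Theta^0$ with $d(\sigma,\sigma_0) = m$ and invoke Proposition \ref{prop::chernoff} (with $\lambda$ as in Equation (\ref{eqn:lambda2})) to get
\begin{align*}
\p\big(T(\sigma) \geq T(\sigma_0)\big) \leq \exp\!\left(-(\alpha(\sigma;\sigma_0) \wedge \gamma(\sigma;\sigma_0))\, I\right).
\end{align*}
Next, because $x \mapsto e^{-xI}$ is monotone decreasing in $x$, I would substitute the two-regime lower bound on $\alpha \wedge \gamma$ from Lemma \ref{lem::number_black_white}. In the regime $m \leq n/(2K)$ this yields $\exp(-((1-\eta)nm/K - m^2)\,I) = \exp(-(1-\eta)nmI/K + m^2 I)$, and in the regime $m > n/(2K)$ it yields $\exp(-2(1-\eta)nmI/(9K))$, matching the two cases of the claimed bound. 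Since Lemma \ref{lem::number_black_white} supplies a single sequence $\eta \to 0$ that works uniformly in $\sigma$ and $m$, the resulting $\eta$ in Lemma \ref{lem::probability_likelihood} is likewise independent of the choice of $\sigma$.

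There is essentially no obstacle here — the lemma is a direct corollary of the two preceding results, and the only small issue is the boundary value $m = \lfloor n/(2K) \rfloor$, which is handled by noting that either regime can be used there (the bounds agree up to the lower-order term $m^2 I$).
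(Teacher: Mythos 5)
Your proposal is correct and matches the paper exactly: the paper itself states that Lemma \ref{lem::probability_likelihood} follows immediately by combining the Chernoff-type bound $\p(T(\sigma)\geq T(\sigma_0))\leq \exp(-(\alpha\wedge\gamma)I)$ from Proposition \ref{prop::chernoff} with the lower bound on $\alpha\wedge\gamma$ from Lemma \ref{lem::number_black_white}, and gives no further argument. Your handling of the uniformity of $\eta$ and the boundary case $m=n/(2K)$ is a reasonable (and harmless) extra remark.
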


We will apply a union bound to get an upper bound for $P_{m}$. It is worthwhile to point out that, in the union bound we should not use the cardinality of $\{\sigma\in\Theta^0: d(\sigma,\sigma_0)=m\}$, which is too large due to counting the assignments from the same equivalent class repetitively. Proposition \ref{prop::cardinality} gives an upper bound for cardinality of the equivalent class $\{\Gamma\}$.

\begin{proposition}\label{prop::cardinality}
The cardinality of equivalent class that has distance $m$ from $\sigma_{0}$ is upper bounded as follows,
\begin{equation*}
\Big|\Big\{\Gamma:\exists \sigma\in\Gamma\text{ s.t. }d(\sigma,\sigma_{0})=m\Big\}\Big|\leq \min\Big\{\big(\frac{enK}{m}\big)^{m},K^{n}\Big\},
\end{equation*}
where $0<m<n$ is a positive integer.

\end{proposition}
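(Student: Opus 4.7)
The plan is to reduce the count of equivalence classes to a count of individual assignments, and then to apply a standard bound on the binomial coefficient. First I would observe that the definition
\[
d(\sigma_1,\sigma_2)=\inf_{\sigma'_2\in\Gamma(\sigma_2)}d_H(\sigma_1,\sigma'_2)
\]
depends only on the equivalence classes $\Gamma(\sigma_1)$ and $\Gamma(\sigma_2)$, so $d(\sigma,\sigma_0)$ is well-defined as a function of the class $\Gamma=\Gamma(\sigma)$. The infimum is over a finite set, hence attained: whenever $d(\Gamma,\sigma_0)=m$ there exists at least one representative $\sigma_\Gamma\in\Gamma$ with $d_H(\sigma_\Gamma,\sigma_0)=m$.

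Next I would use this to build an injection. For each equivalence class $\Gamma$ in the set on the left-hand side, choose one such $\sigma_\Gamma$. Since distinct equivalence classes are disjoint subsets of $[K]^n$, the map $\Gamma\mapsto\sigma_\Gamma$ is injective into
\[
\mathcal{S}_m\triangleq\{\sigma\in[K]^n:d_H(\sigma,\sigma_0)=m\}.
\]
Therefore
\[
\Big|\{\Gamma:\exists\sigma\in\Gamma,\;d(\sigma,\sigma_0)=m\}\Big|\leq|\mathcal{S}_m|.
\]
To bound $|\mathcal{S}_m|$, I would simply describe $\sigma\in\mathcal{S}_m$ by first specifying the $m$-subset of coordinates on which $\sigma$ disagrees with $\sigma_0$, and then specifying a label in $[K]\setminus\{\sigma_0(i)\}$ at each such coordinate. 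This gives $|\mathcal{S}_m|\leq\binom{n}{m}(K-1)^m$.

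Finally I would conclude using the elementary estimate $\binom{n}{m}\leq(en/m)^m$ (from $m!\geq(m/e)^m$), which yields
\[
\binom{n}{m}(K-1)^m\leq\left(\frac{en}{m}\right)^m(K-1)^m\leq\left(\frac{enK}{m}\right)^m.
\]
The competing bound $K^n$ is immediate, since the total number of equivalence classes cannot exceed the total number of assignments $|[K]^n|=K^n$. Taking the minimum of the two bounds gives the stated inequality. The argument is essentially bookkeeping; the only delicate point, and the one I would double-check, is the injectivity of $\Gamma\mapsto\sigma_\Gamma$, which relies on the trivial but important fact that $\Theta^0$ partitions into equivalence classes so that distinct classes cannot share a representative.
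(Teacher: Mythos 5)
Your proposal is correct and follows essentially the same route as the paper: pick a representative of each class attaining $d_H(\sigma,\sigma_0)=d(\sigma,\sigma_0)=m$, count such assignments by choosing the $m$ disagreeing coordinates and their new labels, and apply $\binom{n}{m}\leq(en/m)^m$, with $K^n$ as the trivial competing bound. Your explicit injectivity check and the slightly sharper $(K-1)^m$ factor are minor refinements of the paper's terser argument, not a different approach.
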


With Proposition \ref{prop::cardinality} and the union bound we are able to get a satisfactory bound by
\begin{align*}
P_{m}\leq \Big|\Big\{\Gamma:\exists \sigma\in\Gamma\text{ s.t. }d(\sigma,\sigma_{0})=m\Big\}\Big|\max_{\{\sigma:\;d(\sigma,\sigma_0)=m\}}\p \big(T(\sigma)\geq T(\sigma_{0})\big).
\end{align*}

\begin{proof}[Proof of Theorem \ref{thm:upper}] We only prove the case with $K\rightarrow\infty$ and $\frac{nI}{K\log K}\rightarrow\infty$. Let $\eta\rightarrow 0$ be a universal positive sequence given in  Lemma \ref{lem::probability_likelihood}. We consider three scenarios as follows.
\newline
~\\
(1) If $\liminf_{n\rightarrow\infty}\frac{nI}{K\log n}>1$, there exists a small constant $\epsilon>0$ such that $\frac{(1-\eta)nI}{K\log n}>1+\epsilon$. Let $\eta$ decay slowly such that both $\frac{\eta nI}{K\log K}$ and $\frac{\eta n}{K}$ go to infinity. We have $P_1\leq nK\exp\big(-\big(\frac{(1-\eta)n}{K}-1\big)I\big)\leq R$, where $R\triangleq n\exp\big(-(1-2\eta)nI/K\big)$. Since
\begin{align*}
n\R(\sigma,\hat\sigma)&\leq P_1 +\sum_{m=2}^{n}mP_m,
\end{align*}
it is sufficient to show $\sum_{i=2}^{n}mP_m$ is negligible compared with $R$. For $m\in[2, m']$, where $m'=\frac{\epsilon n}{3K}$, we have
\begin{align*}
P_m&\leq \big(\frac{enK}{2}\exp\big(-\frac{(1-\eta)nI}{K}+mI\big)\big)^m\\
&\leq\big(\frac{enK}{2}\exp\big(-\frac{(1-\eta)nI}{K}+mI\big)\big)\big(\frac{enK}{2}\exp\big(-\frac{(1-\eta)nI}{K}+m'I\big)\big)^{m-1}\\
&\leq n\exp\big(-\frac{(1-2\eta)nI}{K}\big)\exp(mI)n^{-\epsilon (m-1)/3}\\
&\leq Rn^{-\epsilon (m-1)/6},
\end{align*}
where we use the fact that $I\lesssim 1$ in the fourth inequality to show $e^{I}n^{-\epsilon/6}<1$ when $n$ is large enough. As a consequence, $\sum_{i=2}^{m'}mP_m=o(R)$, as $\{mP_m\}_{i=2}^{m'}$ is dominated by a fast-decay geometric series.

For $m\in[m',n]$, we have 
\begin{align*}
P_m&\leq\big(\frac{enK}{m'}\exp\big(-\frac{2(1-\eta)nI}{9K}\big)\big)^m\\
&\leq n\exp\big(-\frac{(1-2\eta)nI}{K}\big)\big(\frac{enK}{m'}\exp\big(-\frac{2(1-\eta)nI}{9K}\big)\big)^{m-9}\\
&\leq Rn^{-2(m-9)/9}.
\end{align*}
Since $m'\rightarrow\infty$, $\{mP_m\}_{m\geq m'}$ is dominated by a fast-decay geometric series, which leads to $\sum_{i>m'}^{n}mP_m=o(R)$.
\newline
~\\
(2) If $\limsup_{n\rightarrow\infty}\frac{nI}{K\log n}<1$, there exists a small constant $\epsilon>0$ such that $\frac{(1-\eta)nI}{K\log n}<1-\epsilon$. Let $m_0=n\exp\big(-(1-K^{-\epsilon/2})\frac{(1-\eta)nI}{K}\big)$, which satisfies both $m_0\geq (nK)^{\epsilon/2}$ and $m_0=o(\frac{n}{K^2})$. We are going to show that $\{P_m\}_{m\geq m_0}$ is upper bounded by a fast decaying series $\{Q_m\}_{m\geq m_0}$.

For any $m\in[m_0, m']$, where $m'=\frac{n}{K^{1+\epsilon}}$, we have
\begin{align*}
P_m&\leq \big(\big(\frac{enK}{m_0}\big)\exp\big(-\frac{(1-\eta)nI}{K}+m'I\big)\big)^{m} \\
&\leq\big(\exp\big(\log(nK)+\big((1-K^{-\epsilon/2})-(1-2K^{-\epsilon})\big)\frac{(1-\eta)nI}{K}\big)\big)^m\\
&\leq \exp\big(-\frac{m}{2K^{\epsilon/2}}\frac{(1-\eta)nI}{K}\big),
\end{align*}
which is denoted as $Q_m$. Since $\frac{m_0}{K^{\epsilon/2}}\gg \log n$, we have $\sum_{m=m_0}^{m'}P_m\leq \sum_{m=m_0}^{m'}Q_m\leq m'Q_{m_0}\leq\exp\big(\log n-\frac{m_0}{2K^{\epsilon/2}}\frac{(1-\eta)nI}{K}\big)= o(\frac{m_0}{n})$. For $m'\leq m$, we have
\begin{align*}
P_m \leq \big(\frac{enK}{m'}\exp\big(-\frac{2(1-\eta)nI}{9K}\big)\big)^m\leq  \exp\big(-\frac{nmI}{9K}\big).
\end{align*}
Denote $Q_m=\exp\big(-\frac{nmI}{9K}\big)$, which decays geometrically fast, as $\frac{nI}{K}\rightarrow\infty$. Thus $\sum_{m=m'}^{n}P_m\leq \sum_{m=m'}^{n}Q_m\leq 2Q_{m'}=o(\frac{m_0}{n})$. Consequently,
\begin{align*}
\R(\sigma,\hat\sigma_0)&\leq \frac{m_0}{n}+\p\big(\exists \sigma\in \Theta^0: d(\sigma_0,\sigma)\geq m_0\text{ \& }l(\sigma)\geq l(\sigma_0)\big)\\
&\leq \frac{m_0}{n}+\sum_{m>m_0}^{m'}P_{m'}+\sum_{m>m'}^{n}P_{m}\\
&\leq \frac{m_0}{n} + m'Q_{m_0} + 2Q_{m'}\\
&=\exp\big(-\frac{(1-o(1))nI}{K}\big).
\end{align*}
(3) If $\frac{nI}{K\log n}=1+o(1)$, there exists a positive sequence $w\rightarrow 0$ such that $|\frac{(1-\eta)nI}{K\log n}-1|\ll w$, $\frac{1}{\sqrt{\log n}}\leq w$ and $\frac{wnmI}{K\log K}\rightarrow\infty$. Define $m_0=n\exp\big(-(1-w)\frac{(1-\eta)nI}{K}\big)$. Thus $m_0\geq n^{w/2}\rightarrow\infty$, and $m_0=o(m')$ for $m'=w^2n/K$. We are going to find a fast decay series $\{Q_m\}$ to upper bound $\{P_m\}$. For $m\in[m_0,m'],$
\begin{align*}
P_m&\leq \big(\big(\frac{enK}{m_0}\big)\exp\big(-\frac{(1-\eta)nI}{K}+m'I\big)\big)^{m} \\
&\leq\big(\log(eK)+\frac{(1-w)(1-\eta)nI}{K}-\frac{(1-\eta)nI}{K}+\frac{w^2nI}{K}\big)^m\\
&\leq \exp\big(-\frac{\omega(1-\eta)nmI}{4K}\big),
\end{align*}
which is denoted as $Q_m$. Note that $\omega m_0\geq wn^{w/2}\rightarrow\infty$. We have $Q_{m_0}<1$, and furthermore
\begin{align*}
\sum_{m=m_0}^{m'}P_m\leq\sum_{m=m_0}^{m'}Q_m\leq m'Q_{m_0}\leq \exp\big(\log n-\frac{\omega m_0(1-\eta)nI}{4K}\big)=o(\frac{m_0}{n}).
\end{align*}
For $m\in[m',n]$, we have
\begin{align*}
P_m \leq \big(\frac{enK}{m'}\exp\big(-\frac{2(1-\eta)nI}{9K}\big)\big)^m\leq \exp\big(-\frac{nmI}{9K}\big),
\end{align*}
Let $Q_m=\exp\big(-\frac{nmI}{9K}\big)$, which decays geometrically fast. Then $\sum_{m=m'}^{n}P_m\leq \sum_{m=m'}^{n}Q_m\leq 2Q_{m'}=o(\frac{m_0}{n})$. Hence
\begin{align*}
\R(\sigma,\hat\sigma_0)\leq \frac{m_0}{n}+\sum_{m>m_0}^{m'}P_{m'}+\sum_{m>m'}^{n}P_{m}\leq\exp\big(-\frac{(1-o(1))nI}{K}\big).
\end{align*}

When $K$ is a fixed constant, the proof is nearly identical but with different $m'$ under each scenario. The proof is thus omitted.
\end{proof}

\section{Proofs of Auxiliary Lemmas}\label{sec:proof_lemma}
We prove Lemma \ref{lem:global_local}, Lemma \ref{lem::multiple_test}, Lemma \ref{lem::binomial_tail}, Lemma \ref{lem::number_black_white}, Proposition \ref{prop::chernoff} and Propostion \ref{prop::cardinality} respectively in this section.

\subsection{Proof of Lemma \ref{lem:global_local}}
Before going directly into the proof we define another network operator: (element-wise) permutation. Let $\pi:[1,2,\ldots,n]\rightarrow [1,2,\ldots,n]$ be a permutation. Denote $\Pi$ to be the set consisting of all such permutations, whose cardinality is $n!$. Define $\sigma_{\pi}$ to be a new assignment with
\begin{align*}
\sigma_{\pi}(i)\triangleq\sigma(\pi^{-1}(i)),\forall 1\leq i\leq n.
\end{align*}
It is obvious that for an arbitrary assignment $\sigma\in\Lambda$, each of its permutation $\sigma_\pi$ is also in the parameter space $\Lambda$.

On the other hand, a permutation on the nodes leads to the change of the network. For a network $G$ with an adjacency matrix $A$, define $G_\pi$ as the network after permutation with a new adjacency matrix $A_\pi$, where
\begin{align*}
(A_\pi)_{i,j}=A_{\pi^{-1}(i),\pi^{-1}(j)}.
\end{align*}
Note that $G_\pi$ can be seen as a network sampled from the assignment $\sigma_\pi$, since $(A_\pi)_{i,j}\sim\Ber(\theta_{\pi^{-1}(i),\pi^{-1}(j)})$.

The proof of Lemma \ref{lem:global_local} is mainly by exploring the exchangeability of the network. Any estimator $\hat\sigma$ is a mapping from a network to a length $n$ vector. We use the square brackets $\hat\sigma[G]$ to indicate that the outcome of $\hat\sigma$ is implemented on the network $G$. And $\hat\sigma[G](i)$ is the value of the $i$-th component of $\hat\sigma[G]$, and when the meaning is clear, we write $\hat\sigma(i)$ for simplicity.

Based on $\hat\sigma$, we can always design a new (unless they are the same) procedure by permutation. Given a network $G$, we can either directly apply $\hat\sigma$ (to be more precise, it is $\hat\sigma[G]$), or first permute the network into $G_\pi$, then implement $\hat\sigma$ on it to get $\hat\sigma[G_\pi]$, and then finally ``permute back" to get the estimation in the original order. To be more precise, define procedure $\hat\sigma^\pi$ as
\begin{align*}
\hat\sigma^\pi[G](i)= \hat\sigma[G_\pi](\pi(i)).
\end{align*}
We use the notation $\hat\sigma^\pi(i)$ short for $\hat\sigma^\pi[G](i)$. See Figure \ref{fig::permutation} for the illustration on getting $\hat\sigma^\pi$. 
\begin{figure}[h]
\centering
\includegraphics[trim = 0mm 0mm 0mm 0mm, clip, width=0.95\textwidth]{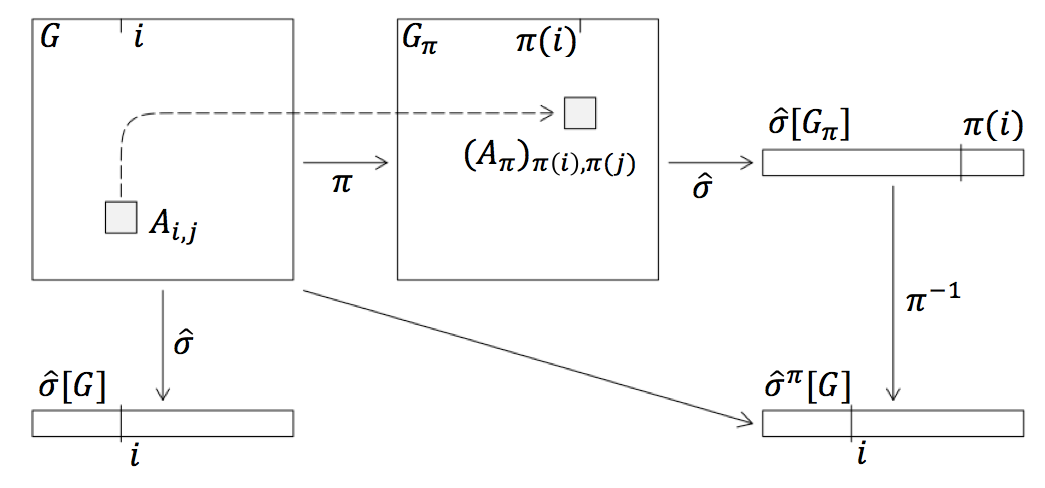}
\caption{Illustration on getting $\hat\sigma^\pi$ based on the original network $G$. All of $\hat\sigma[G]$, $\hat\sigma[G_\pi]$ and $\hat\sigma^\pi[G]$ are demonstrated as $n$-by-1 vectors. It shows $A_{i,j}$ becomes $(A_\pi)_{\pi_i,\pi_j}$ after the permutation $\pi$ of the network. For any specific node $i$ in $G$, its location is changed into $\pi(i)$ in $G_\pi$. The procedure $\hat\sigma[G_\pi]$ estimates the assignemnt of the permuted nodes $\{\pi(i)\}$, while $\hat\sigma^\pi[G]$ estimates the assignment of the original nodes.\label{fig::permutation}}
\end{figure}

Intuitively, due to the exchangeability of $G$, if $\hat\sigma$ is optimal, it should have the same risk as $\hat\sigma^\pi$ for any possible $\pi$. With this trick we are able to show the existence of a universal procedure $\bar\sigma$ which has the equal global risk for all $\sigma\in\Lambda$ and the equal local risk for all $i\in[n]$. Then the proof is completed by the fact that the minimax risk is lower bounded by the Bayes risk.

\begin{proof}[Proof of Lemma \ref{lem:global_local}]
Denote the network to be $G$. Assume $\tilde\sigma$ be one of the estimators that achieve the global Bayes risk, i.e., $B_\tau(\tilde\sigma)=\inf_{\hat\sigma}B_\tau(\hat\sigma)$.
Based on $\tilde\sigma$, we can define a randomized procedure $\bar\sigma$ as $\p(\bar\sigma=\tilde\sigma^\pi)=1/|\Pi|$, for each $\pi\in\Pi$. We will show $\bar\sigma$ is also a global Bayes estimator in terms of $\tau$. For an arbitrary $\sigma\in\Lambda$, we have
\begin{align*}
\E r(\sigma,\bar\sigma)=\frac{1}{n!}\sum_{\pi\in \Pi}\E r(\sigma,\tilde\sigma^\pi).
\end{align*}
Recall that $\R(\sigma,\tilde\sigma^\pi)=\E\inf_{\sigma'\in\Gamma(\tilde\sigma^\pi)} d_H(\sigma,\sigma')$. There exists a one-by-one relation between $\Gamma(\tilde\sigma^\pi)$ and $\Gamma(\tilde\sigma[G_\pi])$, in a the sense that, for any $\sigma'$ from the former set, there is $\sigma''$ in the latter set such that $\sigma''(i)=\sigma'(\pi^{-1}(i)), \forall i\in[n]$, and the reverse also holds. We have the following equation (we add subscript $\sigma$ to explicitly indicate that the expectation is taken with respect to the assignment $\sigma$),
\begin{align*}
\E r(\sigma,\tilde\sigma^\pi)&=\frac{1}{n}\E_\sigma\inf_{\sigma'\in\Gamma(\tilde\sigma^\pi)}\sum_{i=1}^n1\{\sigma(i)\neq\sigma'(i)\}\\
&=\frac{1}{n}\E_\sigma\inf_{\sigma''\in\Gamma(\tilde\sigma[G_\pi])}\sum_{i=1}^n1\{\sigma_\pi(\pi(i))\neq\sigma''(\pi(i))\}\\
&=\frac{1}{n}\E_\sigma\inf_{\sigma''\in\Gamma(\tilde\sigma[G_\pi])}\sum_{i=1}^n1\{\sigma_\pi(i)\neq\sigma''(i)\}.
\end{align*}
The expectation can be further expanded into
\begin{align*}
\E r(\sigma,\tilde\sigma^\pi)=\frac{1}{n}\sum_{G\in \mathbb{G}}\Big(\inf_{\sigma''\in\Gamma(\tilde\sigma[G_\pi])}\sum_{i=1}^n1\{\sigma_\pi(i)\neq\sigma''(i)\}\Big)\p_{\sigma}(G),
\end{align*}
where $\mathbb{G}$ contains all the possible realizations of the graph. Here the subscript of $\p_{\sigma}(G)$ emphasizes that the probability measure is associated with the assignment $\sigma$. Note that $\p_\sigma(G)=\p_{\sigma_\pi}(G_\pi)$ for any $G$ and that the set $\{G_\pi:G\in\mathbb{G}\}$ is exactly equal to $\mathbb{G}$, we have 
\begin{align*}
\E r(\sigma,\tilde\sigma^\pi)&=\frac{1}{n}\sum_{G\in\mathbb{G}}\Big(\inf_{\sigma''\in\Gamma(\tilde\sigma[G_\pi])}\sum_{i=1}^n1\{\sigma_\pi(i)\neq\sigma''(i)\}\Big)\p_{\sigma_\pi}(G_\pi)\\
&=\frac{1}{n}\sum_{G_\pi\in\mathbb{G}}\Big(\inf_{\sigma''\in\Gamma(\tilde\sigma[G_\pi])}\sum_{i=1}^n1\{\sigma_\pi(i)\neq\sigma''(i)\}\Big)\p_{\sigma_\pi}(G_\pi)\\
&=\frac{1}{n}\sum_{G\in\mathbb{G}}\Big(\inf_{\sigma''\in\Gamma(\tilde\sigma[G])}\sum_{i=1}^n1\{\sigma_\pi(i)\neq\sigma''(i)\}\Big)\p_{\sigma_\pi}(G),
\end{align*}
which yields
\begin{align*}
\E r(\sigma,\tilde\sigma^\pi)&=\frac{1}{n}\E_{\sigma_\pi}\inf_{\sigma''\in\Gamma(\tilde\sigma[G])}\sum_{i=1}^n1\{\sigma_\pi(i)\neq\sigma''(i)\}\\
&=\E r(\sigma_\pi,\tilde\sigma).
\end{align*}
Thus
\begin{align*}
B_\tau(\bar\sigma)=\frac{1}{|\Lambda|}\sum_{\sigma\in\Lambda}\Big(\frac{1}{|\Pi|}\sum_{\pi\in \Pi}\E r(\sigma_\pi,\tilde\sigma)\Big)=\frac{1}{|\Pi|}\sum_{\pi\in \Pi}\Big(\frac{1}{|\Lambda|}\sum_{\sigma\in\Lambda}\E r(\sigma_\pi,\tilde\sigma)\Big).
\end{align*}
Since $\{\sigma_\pi:\sigma\in\Lambda\}$ is exactly equal to $\Lambda$ for any $\pi$, we have
\begin{align*}
B_\tau(\bar\sigma)=\frac{1}{|\Pi|}\sum_{\pi\in \Pi}\Big(\frac{1}{|\Lambda|}\sum_{\sigma\in\Lambda}\E r(\sigma,\tilde\sigma)\Big)=\frac{1}{|\Lambda|}\sum_{\sigma\in\Lambda}\Big(\frac{1}{|\Pi|}\sum_{\pi\in \Pi}\E r(\sigma,\tilde\sigma)\Big)=B_\tau(\tilde\sigma).
\end{align*}
Thus $\bar\sigma$ also achieves the minimum Bayes risk. We will show $B_\tau(\bar\sigma(i))= B_\tau(\bar\sigma(j))$ for any $i,j\in[n]$. It is equivalent to define $\bar\sigma$ as
\begin{align*}
\p\Big(\bar\sigma(i)=\tilde\sigma^{\pi}(i)\Big)=\frac{1}{|\Pi|},\forall i\in[n],
\end{align*}
which implies
\begin{align*}
\E r(\sigma(i),\bar\sigma(i))=\frac{1}{|\Pi|}\sum_{\pi\in\Pi}\E r(\sigma(i),\tilde\sigma^{\pi}(i)).
\end{align*}
Note that $\tilde\sigma^\pi(i)=\tilde\sigma[G_{\pi}](\pi(i))$, and $\sigma(i)=\sigma_{\pi}(\pi(i))$. Recall that the definition of the local risk is
\begin{align*}
\E r(\sigma(i),\tilde\sigma^\pi(i))= \E_\sigma\sum_{\sigma'\in S_\sigma(\tilde\sigma^\pi)}\frac{1\{\sigma(i)\neq\sigma'(i)\}}{|S_\sigma(\tilde\sigma^\pi)|}.
\end{align*}
Here recall $S_\sigma(\hat\sigma)\triangleq\{\sigma'\in\Gamma(\hat\sigma):d_H(\sigma,\sigma')=d(\sigma,\hat\sigma)\}$ for any estimator $\hat\sigma$. It is obvious that there exists a one-by-one relation between $S_\sigma(\tilde\sigma^\pi)$ and $S_{\sigma_\pi}(\tilde\sigma[G_\pi])$. For any $\sigma'\in S_\sigma(\tilde\sigma^\pi)$, there is a unique corresponding $\sigma''\in S_{\sigma_\pi}(\tilde\sigma[G_\pi])$ defined as $\sigma''(i)=\sigma'(\pi^{-1}(i)),\forall i\in[n]$, and the reverse also holds. Thus the event $\{\sigma(i)\neq\sigma'(i)\}$ is equivalent to $\{\sigma_\pi(\pi(i))\neq\sigma''(\pi(i))\}$, and $|S_\sigma(\tilde\sigma^\pi)|=|S_{\sigma_\pi}(\tilde\sigma[G_\pi])|$. We have
\begin{align*}
\E r(\sigma(i),\tilde\sigma^\pi(i))&= \E_\sigma\sum_{\sigma''\in S_{\sigma_\pi}(\tilde\sigma[G_\pi])}\frac{1\{\sigma_\pi(\pi(i))\neq\sigma''(\pi(i))\}}{|S_{\sigma_\pi}(\tilde\sigma[G_\pi])|}.
\end{align*} 
By the same argument as the previous one, together with the fact that $\p_{\sigma}(G)=\p_{\sigma_\pi}(G_\pi)$, we expand the expectation and then have
\begin{align*}
\E r(\sigma(i),\tilde\sigma^\pi(i))&=\sum_{G\in\mathbb{G}}\Big(\sum_{\sigma''\in S_{\sigma_\pi}(\tilde\sigma[G_\pi])}\frac{1\{\sigma_\pi(\pi(i))\neq\sigma''(\pi(i))\}}{|S_{\sigma_\pi}(\tilde\sigma[G_\pi])|}\Big)\p_\sigma(G)\\
&=\sum_{G\in\mathbb{G}}\Big(\sum_{\sigma''\in S_{\sigma_\pi}(\tilde\sigma[G_\pi])}\frac{1\{\sigma_\pi(\pi(i))\neq\sigma''(\pi(i))\}}{|S_{\sigma_\pi}(\tilde\sigma[G_\pi])|}\Big)\p_{\sigma_\pi}(G_\pi)\\
&=\sum_{G\in\mathbb{G}}\Big(\sum_{\sigma''\in S_{\sigma_\pi}(\tilde\sigma[G])}\frac{1\{\sigma_\pi(\pi(i))\neq\sigma''(\pi(i))\}}{|S_{\sigma_\pi}(\tilde\sigma[G])|}\Big)\p_{\sigma_\pi}(G).\\
\end{align*}
Thus
\begin{align*}
\E r(\sigma(i),\tilde\sigma^\pi(i))&= \E_{\sigma_\pi}\sum_{\sigma''\in S_{\sigma_\pi}(\tilde\sigma[G_\pi])}\frac{1\{\sigma_\pi(\pi(i))\neq\sigma''(\pi(i))\}}{|S_{\sigma_\pi}(\tilde\sigma[G])|}\\
&=\E r(\sigma_\pi(\pi(i)),\tilde\sigma(\pi(i))).
\end{align*}
This gives
\begin{align*}
\E r(\sigma(i),\bar\sigma(i))=\frac{1}{|\Pi|}\sum_{\pi\in \Pi}\E r\big(\sigma_\pi(\pi(i)),\tilde\sigma(\pi(i))\big),\forall i\in[n].
\end{align*}
Then for the local risk we have
\begin{align*}
B_\tau(\bar\sigma(i))&=\frac{1}{|\Lambda|}\sum_{\sigma\in\Lambda}\Big(\frac{1}{|\Pi|}\sum_{\pi\in \Pi}\E r\big(\sigma_\pi(\pi(i)),\tilde\sigma(\pi(i))\big)\Big)\\
&=\frac{1}{|\Pi|}\sum_{\pi\in \Pi}\Big(\frac{1}{|\Lambda|}\sum_{\sigma\in\Lambda}\E r\big(\sigma_\pi(\pi(i)),\tilde\sigma(\pi(i))\big)\Big)\\
&=\frac{1}{|\Pi|}\sum_{\pi\in \Pi}\Big(\frac{1}{|\Lambda|}\sum_{\sigma\in\Lambda}\E r\big(\sigma(\pi(i)),\tilde\sigma(\pi(i))\big)\Big)\\
&=\frac{1}{|\Lambda|}\sum_{\sigma\in\Lambda}\Big(\frac{1}{|\Pi|}\sum_{\pi\in \Pi}\E r\big(\sigma(\pi(i)),\tilde\sigma(\pi(i))\big)\Big)\\
&=\frac{1}{|\Lambda|}\sum_{\sigma\in\Lambda}\Big(\frac{1}{n}\sum_{l=1}^n\E r\big(\sigma(l),\tilde\sigma(l)\big)\Big),
\end{align*}
where in the third equation we again use the fact that $\{\sigma_\pi:\sigma\in\Lambda\}$ is exactly equal to $\Lambda$ for any $\pi$. So we conclude $B_\tau(\bar\sigma(i))= B_\tau(\bar\sigma(j))$ for any $i,j\in[n]$. Due to the equality
\begin{align*}
\E r(\sigma,\hat\sigma)&=\E \inf_{\delta}\sum_{i=1}^n\frac{1\{(\delta\circ\hat\sigma)(i)\neq \sigma(i)\}}{n}\\
&=\E \frac{1}{|S_\sigma(\hat\sigma)|}\sum_{\sigma'\in S_\sigma(\hat\sigma)}\sum_{i=1}^n\frac{1\{i:\sigma'(i)\neq\sigma(i)\}}{n}\\
&=\frac{1}{n}\sum_{i=1}^n\E\sum_{\sigma'\in S_\sigma(\hat\sigma)}\frac{1\{i:\sigma'(i)\neq\sigma(i)\}}{|S_\sigma(\hat\sigma)|}\\
&=\frac{1}{n}\sum_{i=1}^n\E r(\sigma(i),\hat\sigma(i)),
\end{align*}
we have $B_\tau(\bar\sigma)=\sum_{i=1}^n B_\tau(\bar\sigma(i))/n$, which leads to $\inf_{\hat\sigma}B_\tau(\hat\sigma)=B_\tau(\bar\sigma)=B_\tau(\bar\sigma(1))\geq\inf_{\hat\sigma}B_\tau(\hat\sigma(1))$. We omit the proof of the other direction of the equality stated in the lemma, which uses  a nearly identical argument. The proof is complete.
\end{proof}

\subsection{Proof of Lemma \ref{lem::multiple_test}}
First consider the case with $K\geq 3$. Define $\Theta^L_1=\{(\sigma,\{\theta_{i,j}\})\in\Theta^L: n_{\sigma(1)}=\lfloor \frac{n}{K}\rfloor + 1\}$. So for each $\sigma\in\Theta^L_1$, the community containing the first node always has size $\lfloor \frac{n}{K}\rfloor + 1$. We will show the ratio of the cardinality of $\Theta^L_1$ against that of $\Theta^L$ is a constant. Denote $x_1=\lfloor n/K\rfloor K_1$ and $x_2=(\lfloor n/K\rfloor+1)K_2$, then
\begin{align*}
|\Theta^L|=C\binom{n}{x_2}\binom{n-x_2}{x_1} \text{ and } |\Theta^L_1|=C\binom{n-1}{x_2-1}\binom{n-x_2}{x_1},
\end{align*}
where $C$ is the number of combinations to select $x_1$ balls into $K_1$ bins with size $\lfloor \frac{n}{K}\rfloor$, $x_2$ balls into $K_2$ bins with size $\lfloor \frac{n}{K}\rfloor+1$, and another $n-x_1-x_2$ balls into $K_3$ bins with size $\lfloor \frac{n}{K}\rfloor-1$. Thus
\begin{align*}
\frac{|\Theta^L_1|}{|\Theta^L|}=\frac{\binom{n-1}{x_2-1}}{\binom{n}{x_2}}=\frac{x_2}{n}\geq\epsilon.
\end{align*}
It is equivalent to the probability that the first node is assigned to the $K_2$ bins with size $\lfloor \frac{n}{K}\rfloor+1$. Then
\begin{align*}
B_\tau(\hat\sigma(1))&\geq \frac{1}{|\Theta^L|}\sum_{\sigma\in\Theta^L_1}\E r(\sigma(1),\hat\sigma(1))\geq \frac{\epsilon}{|\Theta^L_1|}\sum_{\sigma\in\Theta^L_1}\E r(\sigma(1),\hat\sigma(1)).
\end{align*}
For each $\sigma_0\in\Theta^L_1$, let $\kappa_0(\sigma_0)=\sigma_0(1)$ be the index of community that the first node belongs to. And let $\kappa(\sigma_0)$ be the indices of communities whose sizes are $\lfloor \frac{n}{K}\rfloor$, i.e.,
\begin{align*}
\kappa(\sigma_0)=\{k\in[K]:n_k=\lfloor \frac{n}{K}\rfloor\}.
\end{align*}
Note that $\kappa_0(\sigma_0)\notin \kappa(\sigma_0)$. If we replace $\sigma_0(1)$ by any $k\in\kappa(\sigma_0)$ while keep the value of the rest of nodes, then we get a new assignment also contained in $\Theta_1^L$ and has distance 1 from $\sigma_0$. In particular, we use the following procedure to get a new assignment $\sigma[\sigma_0]$ based on $\sigma_0$:
\begin{align*}
\sigma[\sigma_0](1)=\begin{cases}
\min\{k\in\kappa(\sigma_0):k>\kappa_0(\sigma_0)\}\text{ if }\max \kappa(\sigma_0)>\kappa_0(\sigma_0);\\
\min\kappa(\sigma_0)\text{ if }\max \kappa(\sigma_0)<\kappa_0(\sigma_0),
\end{cases}
\end{align*}
and $\sigma[\sigma_0](i)=\sigma_0(i)$ for all $i\geq 2$. It is clear that $\sigma[\sigma_0]\in\Theta_1^L$ and $d_H(\sigma_0,\sigma[\sigma_0])=1$. It is also guaranteed that for any $\sigma_0,\sigma_1\in\Theta_1^L$ and $\sigma_0\neq \sigma_1$, the new assignments are also different $\sigma[\sigma_0]\neq\sigma[\sigma_1]$. This leads to that $\Theta^L$ is equal to the set $\{\sigma[\sigma_0]:\sigma_0\in\Theta^L\}$, and hence
\begin{align*}
B_\tau(\hat\sigma(1))&\geq \frac{\epsilon}{2|\Theta^L_1|}\sum_{\sigma_0\in\Theta^L_1}2\E r(\sigma_0(1),\hat\sigma(1))\\
&\geq \frac{\epsilon}{|\Theta^L_1|}\sum_{\sigma_0\in\Theta^L_1}\frac{1}{2}\big(\E r(\sigma_0(1),\hat\sigma(1))+\E r(\sigma[\sigma_0](1),\hat\sigma(1))\big).
\end{align*}
We are going to derive the Bayes risk $\inf_{\hat\sigma}\frac{1}{2}\big(\E r(\sigma_0(1),\hat\sigma(1))+\E r(\sigma[\sigma_0](1),\hat\sigma(1))\big)$ for a given $\sigma_0\in\Theta^L$. Let $\tilde\sigma$ be any estimator achieving the infimum. Since $d_H(\sigma_0,\sigma[\sigma_0])=1$, we have $r(\sigma_0(1),\tilde\sigma(1))=d_H(\sigma_0(1),\tilde\sigma(1))$ and a similar equation holds for $\sigma[\sigma_0]$. The estimator $\tilde\sigma(1)$ can be interpreted as the Bayes estimator with respect to the zero-one loss. Then $\tilde\sigma(1)$ must be the mode of the posterior distribution. Let $J_0$ to be the set $\{u\in[n]\setminus\{1\}:\sigma_0(u)=\sigma_0(1)\}$, and $J_1=\{u\in[n]:\sigma_0(u)=\sigma[\sigma_0](1)\}$. For a given adjacency matrix $A$, the conditional distributions are
\begin{align*}
\p(A|\sigma_0)=\prod_{u\in J_0}(\frac{a}{n})^{A_{1,u}}(1-\frac{a}{n})^{1-A_{1,u}}\prod_{u\in J_1}(\frac{b}{n})^{A_{1,u}}(1-\frac{b}{n})^{1-A_{1,u}}f(A^{C}),
\end{align*}
and
\begin{align*}
\p(A|\sigma[\sigma_0])=\prod_{u\in J_1}(\frac{a}{n})^{A_{1,u}}(1-\frac{a}{n})^{1-A_{1,u}}\prod_{u\in J_0}(\frac{b}{n})^{A_{1,u}}(1-\frac{b}{n})^{1-A_{1,u}}f(A^{C}).
\end{align*}
Here $A^C$ consists all the rest of the entries: $A^C=\{(u,v):v>u\geq 2\text{, or } u=1\text{ and }v\notin J_0\cup J_1\}$. It is obvious that $f(A^C)$ is invariant to the choice of $\sigma_0$ or $\sigma[\sigma_0]$. Thus 
\begin{align*}
\tilde\sigma(1)=\begin{cases}
\sigma_0(1),\text{ if }\sum_{u\in J_0}A_{1,u}\geq \sum_{u\in J_1}A_{1,u},\\
\sigma[\sigma_0](1),\text{ if }\sum_{u\in J_0}A_{1,u}< \sum_{u\in J_1}A_{1,u}.
\end{cases}
\end{align*}
Thus $\E r(\sigma_0(1),\hat\sigma(1))=\p_{\sigma_0}\big(\sum_{u\in J_0}A_{1,u}< \sum_{u\in J_1}A_{1,u}\big)\geq\p\big(\sum_{u=1}^{\lfloor n/K\rfloor}X_u\geq \sum_{u=1}^{\lfloor n/K\rfloor}Y_u\big)$, and $\E r(\sigma[\sigma_0](1),\hat\sigma(1))\big)=\p_{\sigma[\sigma_0]}\big(\sum_{u\in J_0}A_{1,u}\geq \sum_{u\in J_1}A_{1,u}\big)\geq\p\big(\sum_{u=1}^{\lfloor n/K\rfloor}X_u\geq \sum_{u=1}^{\lfloor n/K\rfloor}Y_u\big)$. Consequently,
\begin{align*}
\frac{1}{2}\big(\E r(\sigma_0(1),\hat\sigma(1))+\E r(\sigma[\sigma_0](1),\hat\sigma(1))\big)&\geq\p\Big(\sum_{u=1}^{\lfloor n/K\rfloor}X_u\geq \sum_{u=1}^{\lfloor n/K\rfloor}Y_u\Big).
\end{align*}
The above inequality holds for each $\sigma_0\in\Theta^L$. Hence
\begin{align*}
\inf_{\hat\sigma}B_\tau(\hat\sigma(1))&\geq\frac{\epsilon}{|\Theta^L_1|}\sum_{\sigma_0\in\Theta^L_1}\inf_{\hat\sigma}\frac{1}{2}\big(\E r(\sigma_0(1),\hat\sigma(1))+\E r(\sigma[\sigma_0](1),\hat\sigma(1))\big)\\
&\geq \frac{\epsilon}{|\Theta^L_1|}\sum_{\sigma_0\in\Theta^L_1}\frac{1}{2}\big(\E r(\sigma_0(1),\tilde\sigma(1))+\E r(\sigma[\sigma_0](1),\tilde\sigma(1))\big)\\
&\geq \epsilon \p\Big(\sum_{u=1}^{\lfloor n/K\rfloor}X_u\geq \sum_{u=1}^{\lfloor n/K\rfloor}Y_u\Big).
\end{align*}

For the case $K=2$, we re-define $\Theta^L_1$ and show that its cardinality is same with that of $\Theta^L$ up to a constant factor. \textit{(1)} If $\frac{n}{2}\neq\lfloor\frac{n}{2}\rfloor$, then define $\Theta^L_1=\{(\sigma,\{\Theta^L_{i,j}\})\in\Theta^L:n_{\sigma(1)}=\lceil\frac{n}{2}\rceil\}$. Then $|\Theta^L_1|/|\Theta^L|=1/2$.
\textit{(2)} If $\frac{n}{2}=\lfloor\frac{n}{2}\rfloor$, then define $\Theta^L_1=\{(\sigma,\{\Theta^L_{i,j}\})\in\Theta^L:n_{\sigma(1)}>\frac{n}{2}\}$. Then
\begin{align*}
\frac{|\Theta^L_1|}{\Theta^L}=1-\frac{|\Theta^L\setminus\Theta^L_1|}{|\Theta^L|}=1-\frac{\binom{n-1}{n/2-1}}{\binom{n}{n/2}+2\binom{n}{n/2+1}}=1-\frac{(n/2-1)/n}{1+\frac{n/2}{n/2+1}}>\frac{1}{2}.
\end{align*}
Then with exactly the same argument used for $K\geq 3$ we finish the proof.

\subsection{Proof of Lemma \ref{lem::binomial_tail}}
\textit{(1)} First consider the case when $\frac{nI}{K\log K}\rightarrow\infty$. Let $p(x)$ be the probability mass function of $Z_{i}$, and $M(t)$ be the moment generating function of $Z_{i}$. That is
\begin{align*}
M(t)&=\E e^{tX_i}\E e^{-tY_i}\\
&=\big(e^t\frac{b}{n}+1-\frac{b}{n}\big)\big(e^{-t}\frac{a}{n}+1-\frac{a}{n}\big).
\end{align*}
The minimum of $M(t)$ is achieved at $t^\star=\frac{1}{2}\log\frac{a(1-b/n)}{b(1-a/n)}$, with $M(t^\star)=\big(\sqrt{\frac{a}{n}\frac{b}{n}}+\sqrt{(1-\frac{a}{n})(1-\frac{b}{n})}\big)^2$. This gives $I=-\log M(t^\star)=\max_t(-\log M(t))$. Let $\delta$ be a positive number which may depend on $n$. Denote $S_{n'}=\sum_{i=1}^{n'}Z_{i}$. Then
\begin{align*}
\p(S_{n'}\geq 0)&\geq \sum_{n'\delta>S_{n'}\geq 0}\prod_{i=1}^{n'}p(z_i) \\
&\geq \frac{M^{n'}( t^\star )}{\exp(n' t^\star \delta)}\sum_{n'\delta>S_{n'}\geq 0}\prod_{i=1}^{n'}\frac{\exp( t^\star z_i)p(z_i)}{M( t^\star )},
\end{align*}
where we use the fact that $\exp(n' t^\star \delta)\geq \exp(t^\star \sum_i z_i) \geq \prod \exp(t^\star z_i)$ when $\sum_i z_i< n'\delta$. Denote $q(w)=\frac{\exp( t^\star w)p(w)}{M( t^\star )}$. Then
\begin{align*}
\p(S_{n'}\geq 0)&\geq \frac{M^{n'}( t^\star )}{\exp(n' t^\star \delta)}\sum_{n'\delta>S_{n'}\geq 0}\prod_{i=1}^{n'}q(z_i) \\
&=\exp\left(-n'I\right)\exp(-n' t^\star \delta)\sum_{n'\delta>S_{n'}\geq 0}\prod_{i=1}^{n'}q(z_i).
\end{align*}
Note that $q(w)$ is a probability mass function, as $\sum_w \frac{\exp( t^\star w)p(w)}{M( t^\star )}=1$. Let $W_{1},W_{2},\ldots,W_{n'}$ be i.i.d random variable with probability mass function $q(w)$, then
\begin{align*}
\p(S_{n'}\geq 0)\geq \exp\left(-n'I\right)\exp(-n' t^\star \delta)\p\Big(\delta>\frac{1}{n'}\sum_{i=1}^{n'}W_{i}\geq 0\Big).
\end{align*}
A closer look on $W_{1}$ gives
\begin{align*}
\p(W_{1}=1)=\p(W_{1}=-1)=\frac{1}{M( t^\star )}\sqrt{\frac{a}{n}\frac{b}{n}(1-\frac{a}{n})(1-\frac{b}{n})},
\end{align*}
and $\p(W_{1}=0)=1-\p(W_{1}=1)-\p(W_{1}=-1)$. Thus $\E W_{1}=0$ and
\begin{align*}
\text{Var}(W_{1})=\frac{2}{M( t^\star )}\sqrt{\frac{a}{n}\frac{b}{n}(1-\frac{a}{n})(1-\frac{b}{n})}.
\end{align*}
Denote $V=\text{Var}(\sum_{i=1}^{n'}W_{i}/n')=\text{Var}(W_{1})/n'$. We will later show that $I/(t^\star \sqrt{V})\rightarrow\infty$. Now if it holds, then define $\delta=V^{\frac{1}{4}}I^{\frac{1}{2}}( t^\star )^{-\frac{1}{2}}$. It satisfies $\sqrt{V}=o(\delta)$. Chebyshev's inequality yields
\begin{align*}
\p\Big(\Big|\frac{1}{n'}\sum_{i=1}^{n'}W_{i}\Big|\geq \delta\Big)\leq \frac{V}{\delta^2}=o(1).
\end{align*}
By the fact that the distribution of $\frac{1}{n'}\sum_{i=1}^{n'}W_{i}$ is symmetric, we have
\begin{align*}
\p\Big(\delta>\frac{1}{n'}\sum_{i=1}^{n'}W_{i}\geq 0\Big)=\frac{1}{2}\Big(1-\p\Big(\Big|\frac{1}{n'}\sum_{i=1}^{n'}W_{i}\Big|\geq \delta\Big)\Big)\rightarrow \frac{1}{2}.
\end{align*}

To prove $I/(t^\star \sqrt{V})\rightarrow\infty$, first consider the case with $a\asymp b$. Since $I\asymp(a-b)^2/na$, and $t^\star=\frac{1}{2}\log((1+\frac{a-b}{b})(1+\frac{a-b}{n(1-a/n)}))\asymp\frac{a-b}{a}$, and $\sqrt{V}\asymp\sqrt{aK}/n$, we have $\frac{I}{t^\star \sqrt{V}}\asymp\frac{a-b}{\sqrt{aK}}\rightarrow\infty$, implied by the fact $\frac{nI}{K}\asymp\frac{a-b}{\sqrt{aK}}\rightarrow\infty$. On the other hand, if $a/b\rightarrow\infty$ (recall we assume $b>\epsilon>0$ and $a/n<1-\epsilon$), we have $I\asymp a/n$ and $M(t^\star)\asymp 1$. Note that $(\log\frac{a}{b})(\frac{b}{a})^{\frac{1}{4}}$ goes to 0, hence $t^\star \sqrt{V}=o(\sqrt{aK}/n)$. Then $\frac{I}{t^\star \sqrt{V}}\gg \sqrt{a/K}$. Since $nI/K\asymp a/K\rightarrow\infty$, $\frac{I}{t^\star \sqrt{V}}$ also goes to infinity.

\textit{(2)} If $\frac{nI}{K}=O(1)$, we can choose $\delta$ such that $nt^\star\delta/K$ is also a constant. Then by considering the case $a\asymp b$ and $a/b\rightarrow\infty$ separately, we have $\frac{\delta}{\sqrt{V}}\asymp\frac{K}{nt^\star\sqrt{V}}\asymp \frac{K}{nI}$ with a similar argument used above. Thus $\p(S_{n'}>0)$ is a constant.

\subsection{Proof of Lemma \ref{lem::number_black_white}}
Due to the symmetry between $\sigma$ and $\sigma_0$ (both are in the same parameter space), we have $\alpha(\sigma;\sigma_0)=\gamma(\sigma_0;\sigma)$ and $\gamma(\sigma;\sigma_0)=\alpha(\sigma_0;\sigma)$. It is sufficient to get the desired lower bound for $\gamma(\sigma;\sigma_0)$, as the same bound automatically holds for $\alpha(\sigma;\sigma_0)$.

By the definition of $\Theta^0$ there must exist a $\eta_1\rightarrow 0$ such that $|\frac{n_k}{n/K}-1|\leq \eta_1$ for every $k\in[K]$. First consider $m\leq\frac{n}{2K}$. Without loss of generality, let $\sigma$ satisfy
\begin{align*}
\sigma(i)=k,\forall i\in[\sum_{j=1}^{k-1}n'_{j}+1,\sum_{j=1}^{k}n'_{j}],
\end{align*}
where $\{n'_{k}\}_{k=1}^K$ are the sizes of communities in $\sigma$. Recall $\{n_k\}_{k=1}^K$ are the true community sizes in $\sigma_0$. Define $m_k=|\{i:\sigma(i)=k,\sigma_0(i)\neq k\}|$, then $m=\sum_k m_k$. For $k\in[K]$, define
\begin{align*}
\gamma_{k}(\sigma;\sigma_{0})&=|\{(i,j):\sigma(i)=\sigma(j)=k,\sigma_0(i)\neq \sigma_0(j),i<j\}| \\
&=\Big|\Big\{(i,j):\sigma_0(i)\neq \sigma_0(j),\sum_{j=1}^{k-1}n'_{j}+1\leq i<j\leq \sum_{j=1}^{k}n'_{j}\Big\}\Big|.
\end{align*}
Obviously $\gamma(\sigma;\sigma_{0})=\sum_{k=1}^{K}\gamma_{k}(\sigma;\sigma_{0})$. We have $\gamma_k(\sigma;\sigma_0)\geq |\{i:\sigma(i)=k, \sigma_0(i)=k\}||\{i:\sigma(i)=k, \sigma_0(i)\neq k\}|= (n_k-m_k)m_k$. Then
\begin{align*}
\gamma(\sigma;\sigma_{0})\geq \sum_k m_k(n_k-m_k)\geq\frac{(1-o(1))mn}{K}-\sum_k m_k^2\geq \frac{(1-o(1))mn}{K}-m^2.
\end{align*}
\\
Now consider the case $m>\frac{n}{2K}$. Define $m_{k,k'}=|\{i:\sigma(i)=k,\sigma_0(i)= k'\}|$ for any $k,k'\in[K]$. It is obvious that equations $m_k=\sum_{k'\neq k}m_{k,k'}$, $n'_k=m_k+m_{k,k}$ and $n_{k'}=\sum_{k}m_{k,k'}$ hold for any $k$ and $k'$. 

It can be shown that we cannot find an pair of $(k,k')$ such as $k\neq k'$ and $m_{k,k'}> \frac{2(1+\eta_1)n}{3K}$. Otherwise, if $m_{k,k'}> \frac{2(1+\eta_1)n}{3K}$, then $m_{k',k'}\leq n_{k'}-m_{k,k'}<\frac{(1+\eta_1)n}{3K}$. Then we can exchange the label of $k$ and $k'$ to get a new estimation $\sigma'$. Compared with $\sigma$, this helps correctly recover at least $m_{k,k'}-(n'_k-m_{k,k'})-m_{k',k'}>0$ nodes. Since $\sigma'\in\Gamma(\sigma)$, then $m=d(\sigma_0,\sigma)\leq d_H(\sigma_0,\sigma')<m$, which leads to a contradiction.

So we have $m_{k,k'}\leq \frac{2(1+\eta_1)n}{3K}$ for all $k\neq k'$. For a given $m_k$, we have
\begin{align*}
\frac{\gamma_{k}(\sigma;\sigma_{0})}{n'_k m_k}=\frac{\frac{1}{2}(n_k^{'2}-\sum_{k'}m_{k,k'}^2)}{n'_k m_k},
\end{align*}
with a constrain $m_k=\sum_{k'\neq k}m_{k,k'}$. When $m_k\leq \frac{2(1+\eta_1)n}{3K}$, it can be shown that
\begin{align*}
\frac{\gamma_{k}(\sigma;\sigma_{0})}{n'_k m_k}\geq \frac{\frac{1}{2}(n_k^{'2}-(n'_k-m_k)^2-m_k^2)}{n'_k m_k}=\frac{n'_k-m_k}{n'_k}\geq \frac{(1-5\eta_1)\frac{n}{K}}{3n'_k}.
\end{align*}
And when $m_k\geq \frac{2(1+\eta_1)n}{3K}$,
\begin{align*}
\frac{\gamma_{k}(\sigma;\sigma_{0})}{n'_k m_k}&\geq \frac{\frac{1}{2}(n_k^{'2}-(n'_k-m_k)^2-(m_k-\frac{2(1+\eta_1)n}{3K})^2-(\frac{2(1+\eta_1)n}{3K})^2)}{n'_k m_k}\\
&\geq \frac{m_k(n'_k-m_k)+\frac{2(1+\eta_1)n}{3K}(m_k-\frac{2(1+\eta_1)n}{3K})}{n'_k m_k}\\
&\geq \frac{2(1-5\eta_1)\frac{n}{K}}{9n'_k}.
\end{align*}
Then sum up over all $k$ and we get $\gamma(\sigma;\sigma_0)\geq \frac{2(1-5\eta_1)nm}{9K}$. By choosing $\eta=5\eta_1$ the proof is complete.

\subsection{Proofs of Proposition \ref{prop::chernoff} and \ref{prop::cardinality}}
We first present proposition \ref{prop::homo_inhomo}, which is easy to be verified by coupling. It is helpful for the proof of Proposition \ref{prop::chernoff}.
\begin{proposition}\label{prop::homo_inhomo}
Let $\alpha$ and $\gamma$ be arbitrary positive integers, and $m$ take any value in $\mathbb{R}$. Define series of independent variables $\{X_i\}_{i=1}^\alpha$, $\{Y_i\}_{i=1}^\gamma$, $\{U_i\}_{i=1}^\alpha$ and $\{V_i\}_{i=1}^\gamma$. Let $U_i\sim\Ber(p_i)$, $V_i\sim\Ber(q_i)$, $X_i\sim\Ber(p)$ and $Y_i\sim\Ber(q)$ with $\min p_i\geq p$ and $\max q_i\leq q$. Then
\begin{align*}
\p\Big(m+\sum_{i=1}^\alpha U_i\leq \sum_{i=1}^\gamma V_i\Big)\leq \p\Big(m+\sum_{i=1}^\alpha X_i\leq \sum_{i=1}^\gamma Y_i\Big).
\end{align*}
\end{proposition}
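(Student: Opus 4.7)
The plan is to establish the inequality by an explicit coupling that exhibits stochastic domination in both directions: the inhomogeneous sum $\sum U_i$ will dominate the homogeneous sum $\sum X_i$, while $\sum V_i$ will be dominated by $\sum Y_i$. Since the events on both sides compare $m+\sum U_i$ (resp.\ $m+\sum X_i$) to $\sum V_i$ (resp.\ $\sum Y_i$), these two domination directions reinforce each other, so the inhomogeneous event will imply the homogeneous one under the coupling.

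Concretely, I would introduce independent uniform random variables $\xi_1,\ldots,\xi_\alpha$ and $\eta_1,\ldots,\eta_\gamma$ on $[0,1]$, all mutually independent. I then set
\begin{align*}
X_i = \mathbf{1}\{\xi_i \le p\}, \qquad U_i = \mathbf{1}\{\xi_i \le p_i\}, \qquad Y_j = \mathbf{1}\{\eta_j \le q\}, \qquad V_j = \mathbf{1}\{\eta_j \le q_j\}.
\end{align*}
The marginals match the hypotheses, and the assumptions $p_i \ge p$ and $q_j \le q$ force $U_i \ge X_i$ and $V_j \le Y_j$ pointwise. Summing gives $\sum_i U_i \ge \sum_i X_i$ and $\sum_j V_j \le \sum_j Y_j$ almost surely, so on the event $\{m + \sum_i U_i \le \sum_j V_j\}$ we have $m + \sum_i X_i \le m + \sum_i U_i \le \sum_j V_j \le \sum_j Y_j$, i.e.\ the corresponding homogeneous event occurs. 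Taking probabilities and noting that the joint distribution of $(U,V)$ under this coupling is the same product law as in the statement (and similarly for $(X,Y)$) yields the claimed inequality.

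There is no real obstacle here; the only thing to be careful about is to keep the coupling for the $U$-$X$ pair independent of the coupling for the $V$-$Y$ pair, so that after taking marginals the collection $(X_i,Y_j)$ has the product distribution required on the right-hand side and $(U_i,V_j)$ has the product distribution required on the left-hand side.
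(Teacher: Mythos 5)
Your coupling argument is correct and is exactly the approach the paper intends: the paper gives no written proof of this proposition beyond remarking that it ``is easy to be verified by coupling,'' and your construction via shared uniforms (giving $U_i\geq X_i$ and $V_j\leq Y_j$ pointwise, hence event inclusion) is the standard way to carry that out. The one detail you flag --- keeping the two coupling blocks independent so the marginal joint laws are the required product measures --- is handled correctly.
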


\begin{proof}[Proof of Proposition \ref{prop::chernoff}]
Let $\{X_i\}_{1\leq i\leq \gamma}$ be i.i.d $\Ber( \frac{b}{n} )$ random variables and $\{Y_i\}_{1\leq i\leq \alpha}$ be i.i.d $\Ber( \frac{a}{n} )$ random variables, and $\{X_i\}\perp\{Y_i\}$. Then by Proposition \ref{prop::homo_inhomo}, we have
\begin{align*}
\p(T(\sigma)\geq T(\sigma_0))\leq \p\Big(\sum_{i=1}^{\gamma}X_i-\sum_{i=1}^{\alpha}Y_i\geq \lambda(\gamma-\alpha)\Big).
\end{align*}
As an application of Markov inequality,
\begin{align*}
\p(T(\sigma)\geq T(\sigma_0))&\leq \p\Big(\exp\Big(t\sum^{\gamma}X_i-t\sum^{\alpha}Y_i\Big)\geq \exp(t\lambda(\gamma-\alpha)\Big)\Big)\\
&\leq e^{-t\lambda(\gamma-\alpha)}\Big(\E e^{tX_1}\Big)^\gamma\Big(\E e^{-tY_1}\Big)^\alpha\\
&=\Big(\E e^{tX_1}\E e^{-tY_1}\Big)^{(1-w)\alpha+w\gamma} \Big(\frac{(\E e^{tX_1})^{1-w}}{(\E e^{-tY_1})^{w}}e^{-t\lambda}\Big)^{\gamma-\alpha}
\end{align*}
holds for any $t>0$. Choose $t=t^\star$. Then $\E e^{t^\star X_1}\E e^{-t^\star Y_1}=e^{-I}$, and $\frac{(\E e^{t^\star X_1})^{1-w}}{(\E e^{-t^\star Y_1})^{w}}e^{-t^\star\lambda}$ is exactly equal to 1. Thus $\p(T(\sigma)\geq T(\sigma_0))\leq e^{-\min\{\alpha, \gamma\} I}$.
\end{proof}

\begin{proof}[Proof of Proposition \ref{prop::cardinality}]
Without loss of generality we assume that $d_H(\sigma,\sigma_{0})=d(\sigma,\sigma_{0})$. Then $\sigma$ assigns $m$ nodes with different values from $\sigma_{0}$, and there are $K$ possible values for each node. Thus
\begin{align*}
\Big|\Big\{\Gamma:\exists \sigma\in\Gamma\text{ s.t. }d(\sigma,\sigma_{0})=m\Big\}\Big|\leq\binom{n}{m} K^m\leq\left(\frac{enK}{m}\right)^{m}.
\end{align*}
In addition, since each node has at most $K$ possible choices, we have a naive bound for the cardinality of ${\Gamma}$ as $|\{\Gamma\}|\leq K^{n}$.
\end{proof}

\begin{supplement}
\sname{Supplement A}\label{suppA}
\stitle{Supplement to ``Mimimax Rates of Community Detection in Stochastic Block Models''}
\slink[url]{url to be specified}
\sdescription{In the supplement \cite{supplement}, we provide proofs  for Theorems \ref{thm:lower_beta} and \ref{thm:upper_beta}, which extend the minimax results of Theorems \ref{thm:lower} and \ref{thm:upper} to a larger parameter space $\Theta$.}
\end{supplement}

\bibliographystyle{plainnat}
\bibliography{minimax}

\newpage
\thispagestyle{empty}
\setcounter{page}{1}
\begin{center}
\MakeUppercase{\large Supplement to ``Mimimax Rates of Community Detection in Stochastic Block Models''}
\medskip

{BY Anderson Y. Zhang and Harrison H.~Zhou}
\medskip

{Yale University}
\end{center}

\appendix

\section{Additional Proofs}
In this appendix we provide the proofs of Theorem \ref{thm:lower_beta} and Theorem \ref{thm:upper_beta}.

\subsection{Proof of Theorem \ref{thm:lower_beta}}
~\\
\indent\textit{(1)} For $K=2$, the least favorable case for $\Theta$ is still $\Theta^0$. The proof is identical to that of Theorem \ref{thm:lower}.

\textit{(2)} For $K=3$, it is always possible to have $\sigma\in\Theta$ such that a constant proportion of communities have size $\lfloor \frac{n}{\beta K}\rfloor$, and another constant proportion have the same size $\lceil \frac{n}{\beta K}\rceil$, with the rest communities have much larger size. Define $\Theta^L$ to contain all such $\sigma$. Then with identical arguments used to establish Lemma \ref{lem::multiple_test} and Lemma \ref{lem::binomial_tail} we have
\begin{align*}
\inf_{\hat\sigma}\sup_{\Theta}\E r(\sigma,\hat\sigma)&\geq\inf_{\hat\sigma}\sup_{\sigma\in\Theta^L}B_\tau(\hat\sigma(1))\\&
\geq c\p\Big(\sum_{u=1}^{\lfloor n/\beta K\rfloor}X_u\geq \sum_{u=1}^{\lfloor n/\beta K\rfloor}Y_u\Big)\\&
\geq \exp(-(1+o(1))nI/\beta K).
\end{align*}

\subsection{Proof of Theorem \ref{thm:upper_beta} ($K=2$)}

Without loss of generality we assume $\frac{n}{2}=\lfloor \frac{n}{2}\rfloor$ throughout this section. For arbitrary $\sigma,\sigma_0\in\Theta$ with $d(\sigma,\sigma_0)=m$, we can define $\alpha(\sigma;\sigma_0)$ and $\gamma(\sigma;\sigma_0)$ the same way as in Section \ref{subsec:proof_upper}. Note that $m\leq \frac{n}{2}$ since $d(\sigma,\sigma_0)=\min\{d_H(\sigma,\sigma_0),n-d_H(\sigma,\sigma_0)\}$.  By Proposition \ref{prop::chernoff}, we have
\begin{align*}
\p(T(\sigma)\geq T(\sigma))&\leq\p\left(\sum_{i=1}^{\gamma}X_{i}-\sum_{i=1}^{\alpha}Y_{i}\geq \lambda(\gamma-\alpha)\bigg|X_{i}\stackrel{iid}{\sim} \text{Ber}(\frac{b}{n}),Y_{i}\stackrel{iid}{\sim} \text{Ber}( \frac{a}{n} )\right).
\end{align*}
Note that in $K=2$ we have a specific equality as $\alpha+\gamma=m(n-m)$. Recall that $\lambda=-\frac{1}{2t^\star}\log\big(\frac{\frac{a}{n}\exp(-t^\star)+1-\frac{a}{n}}{\frac{b}{n}\exp(t^\star)+1-\frac{b}{n}}\big)$. By the Chernoff bound,
\begin{align*}
\p(T(\sigma)\geq T(\sigma_0))&\leq \big(\E e^{t^\star X_i}\big)^\gamma\big(\E e^{-t^\star Y_i}\big)^\alpha e^{-t^\star\lambda(\gamma-\alpha)}\\
&=\big(\E e^{t^\star X_i}\E e^{-t^\star Y_i}\big)^{\frac{m(n-m)}{2}}\Big(\frac{\E e^{t^\star X_i}}{\E e^{-t^\star Y_i}}e^{-2t^\star\lambda'}\Big)^{\gamma-\frac{m(n-m)}{2}}\\
&=\exp\Big(-\frac{m(n-m)I}{2}\Big),
\end{align*}
where we use $\E e^{t^\star X_i}\E e^{-t^\star Y_i}=\exp(-I)$ and $e^{2t^\star\lambda'}=\frac{\E e^{t^\star X_i}}{\E e^{-t^\star Y_i}}$. The proof is similar to that of Theorem \ref{thm:upper}. Here we only include the key technique and omit the details. Assume $0<\epsilon<1/8$. Consider the following three cases:
\newline
\textit{(1)} If $nI/2>(1+\epsilon)\log n$, define $m_0=1$ and $m'=\epsilon n/2$. Then $P_1\leq n\exp(-(n-1)I/2)$. Denote $R=n\exp(-(n-1)I/2)$. We have
\begin{align*}
P_m\leq
\begin{cases}
(\frac{2en}{2})^m\exp(-\frac{m(n-m)I}{2})\leq Rn^{-\epsilon m/4},\text{ for }m_0<m\leq m'\\
(\frac{2en}{\epsilon n})^m\exp(-\frac{nmI}{4})\leq R\exp(-\frac{n(m-4)I}{8}),\text{ for }m'<m\leq n/2.
\end{cases}
\end{align*}
Then $n\R(\sigma,\hat\sigma)\leq \sum_{m=1}^{n/2}mP_m=(1+o(1))R$.
\newline
\textit{(2)} If $nI/2<(1-\epsilon)\log n$, define $m_0=n\exp(-(1-e^{-\epsilon nI/2})nI/2)$ and $m'=n\exp(-nI/8)$. We have
\begin{align*}
P_m\leq
\begin{cases}
(\frac{2en}{m_0})^m\exp(-\frac{m(n-m')I}{2})=\exp(-e^{-\frac{\epsilon nI}{2}}\frac{nmI}{4}),\text{ for }m_0<m\leq m',\\
(\frac{2en}{m'})^m\exp(-\frac{nmI}{4})\leq \exp(-\frac{nmI}{16}),\text{ for }m'<m\leq n/2.
\end{cases}
\end{align*}
Then $\R(\sigma,\hat\sigma)\leq m_0/n + \sum_{m>m_0}^{n/2}P_m=(1+o(1))m_0/n$.
\newline
\textit{(3)} If $\frac{nI}{2\log n}\rightarrow 1$, there exists a positive sequence $w\rightarrow 0$ such that $|\frac{nI}{2\log n}-1|\ll w$ and $\frac{1}{\sqrt{\log n}}\leq w$. Define $m_0=n\exp(-(1-w)nI/2)$ and $m'=w^2n$. 
\begin{align*}
P_m\leq
\begin{cases}
(\frac{2en}{m_0})^m\exp(-\frac{m(n-m')I}{2})\leq \exp(-\frac{wnmI}{4}),\text{ for }m_0<m\leq m'\\
(\frac{2en}{m'})^m\exp(-\frac{nmI}{4})\leq\exp(-\frac{nmI}{8}),\text{ for }m'<m\leq n/2.
\end{cases}
\end{align*}
Then $\R(\sigma,\hat\sigma)\leq m_0/n + \sum_{m>m_0}^{n/2}P_m=(1+o(1))m_0/n$.

\subsection{Proof of Theorem \ref{thm:upper_beta} ($K\geq 3$)}
For the upper bound, we need the following lemma in replace of Lemma \ref{lem::number_black_white}. Other than that, the proof is identical to that for Theorem \ref{thm:upper} and thus omitted.
\begin{lemma}\label{lem::number_black_white_extend}
Assume $1\leq \beta<\sqrt{\frac{5}{3}}$. Let $\sigma\in\Theta$ be an arbitrary assignment satisfying $d(\sigma,\sigma_{0})=m$, where $0<m<n$ is a positive integer. Then
\begin{align*}
\alpha(\sigma;\sigma_0)\wedge \gamma(\sigma;\sigma_0)\geq
\begin{cases}
\frac{nm}{K\beta}-m^2, \text{ if }m\leq \frac{n}{2K},\\
\frac{c_\beta nm}{K}, \text{ if }m> \frac{n}{2K},
\end{cases}
\end{align*}
where $c_\beta=\frac{(5-3\beta^2)^2}{2\beta(1+3(5-3\beta^2)^2)}$.
\end{lemma}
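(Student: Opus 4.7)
The plan is to mimic the proof of Lemma~\ref{lem::number_black_white}, carrying through the additional slack that the $\beta$-inhomogeneity of community sizes introduces. By the symmetry $\alpha(\sigma;\sigma_0)=\gamma(\sigma_0;\sigma)$ it suffices to lower bound $\gamma(\sigma;\sigma_0)$. Introduce the confusion counts $m_{k,k'}=|\{i:\sigma(i)=k,\sigma_0(i)=k'\}|$, so that $n'_k=\sum_{k'}m_{k,k'}$, $n_{k'}=\sum_k m_{k,k'}$, $m_k=\sum_{k'\neq k}m_{k,k'}$, $\sum_k m_k=m$, and
\[
\gamma(\sigma;\sigma_0)=\sum_k\gamma_k,\qquad \gamma_k=\tfrac12\Big[(n'_k)^2-\sum_{k'}m_{k,k'}^2\Big].
\]
Throughout I use the two-sided bound $n'_k,n_{k'}\in[n/(\beta K),\beta n/K]$.

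For $m\leq n/(2K)$, keep only the diagonal contribution: $\gamma_k\geq m_{k,k}m_k=(n'_k-m_k)m_k\geq(n/(\beta K)-m_k)m_k$. Summing over $k$ gives $\gamma\geq nm/(\beta K)-\sum_k m_k^2\geq nm/(\beta K)-m^2$, matching the first branch of the bound.

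For $m>n/(2K)$, the new ingredient is an off-diagonal cap. Since $d(\sigma,\sigma_0)=m$, relabeling two communities $k,k'$ in $\sigma$ cannot strictly reduce the Hamming distance to $\sigma_0$, which translates to $m_{k,k'}+m_{k',k}\leq m_{k,k}+m_{k',k'}$. Combining with the row/column bounds $m_{k,k}\leq n'_k-m_{k,k'}$ and $m_{k',k'}\leq n_{k'}-m_{k,k'}$ yields $3m_{k,k'}\leq n'_k+n_{k'}\leq 2\beta n/K$, i.e.\ the universal cap $m_{k,k'}\leq c_0:=2\beta n/(3K)$ on every off-diagonal cell. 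Minimizing $\gamma_k$ subject to $\sum_{k'\neq k}m_{k,k'}=m_k$ and this cap concentrates the off-diagonal mass into saturated cells, giving $\sum_{k'\neq k}m_{k,k'}^2\leq c_0 m_k$ and hence $\gamma_k\geq m_k[n'_k-(m_k+c_0)/2]$ (with a sharper two-piece bound when $m_k\leq 2c_0$, as in the original proof). Summing over $k$ and optimizing against the size constraints $n'_k\in[n/(\beta K),\beta n/K]$ and $\sum_k m_k=m$ reduces the problem to a small piecewise-quadratic minimization.

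The main obstacle is extracting the explicit constant $c_\beta=(5-3\beta^2)^2/(2\beta(1+3(5-3\beta^2)^2))$ from that final optimization. The factor $5-3\beta^2$ is naturally read as a combination such as $5n_{*}-3n^{*}=(5-3\beta^2)n/(\beta K)$ with $n_{*}=n/(\beta K)$ and $n^{*}=\beta n/K$, which is positive exactly under $\beta<\sqrt{5/3}$; this matches the vanishing of $c_\beta$ at the boundary and identifies the worst-case configuration as one in which a constant fraction of communities sit at the smallest size $n_{*}$ while the remainder sit near $n^{*}$. Verifying the precise form of $c_\beta$ is mechanical but tedious, and is where the bulk of the calculation lives.
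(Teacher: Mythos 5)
Your first branch and your label-swap inequality $m_{k,k'}+m_{k',k}\leq m_{k,k}+m_{k',k'}$ (hence $3m_{k,k'}\leq n'_k+n_{k'}$) are both correct, but the step where you weaken this to the universal cap $c_0=2\beta n/(3K)$ and then run a per-community minimization breaks down on part of the allowed range of $\beta$. Take a community $k$ of $\sigma$ at the minimal size $n'_k=n/(\beta K)$. Once $\beta^2\geq 3/2$ we have $c_0\geq n'_k$, so the cap places no constraint on the row $\{m_{k,k'}\}_{k'\neq k}$ at all: the minimization you defer to (``concentrate the off-diagonal mass into saturated cells'') returns the configuration in which the whole community sits inside a single true community, giving $\gamma_k=0$ while $m_k=n'_k>0$. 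Your bound $\gamma_k\geq m_k[n'_k-(m_k+c_0)/2]$ is then nonpositive, and no inequality of the form $\gamma_k\geq c\,nm_k/K$ with $c>0$ can be extracted community by community. Since $\sqrt{3/2}<\sqrt{5/3}$, this is a genuine failure inside the lemma's hypothesis, not a boundary effect. The paper's proof avoids it by a pairing argument: when $m_{k,k'}>\tfrac23 n'_k$ (community $k$ nearly absorbed by true community $k'$), it bounds $\gamma_k+\gamma_{k'}$ jointly, using $m_{k'}= n'_{k'}-m_{k',k'}\geq n'_{k'}-(n_{k'}-m_{k,k'})\geq \frac{(5-3\beta^2)n}{3\beta K}$ to show the absorbing community must itself contain many errors. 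That is where $5-3\beta^2$ and the restriction $\beta<\sqrt{5/3}$ actually come from; your reading of the factor as a combination $5n_*-3n^*$ of extreme community sizes, with the worst case a particular size profile, is not how the constant arises.

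Ironically, the refined inequality you derived and then discarded would have rescued your route. Keeping the per-pair cap $m_{k,k'}\leq (n'_k+n_{k'})/3\leq n'_k/3+\beta n/(3K)=:C_k$ gives $\sum_{k'\neq k}m_{k,k'}^2\leq C_k m_k$ and hence
\[
\gamma_k\geq \tfrac{m_k}{2}\bigl[2n'_k-m_k-C_k\bigr]\geq \tfrac{m_k}{2}\bigl[n'_k-C_k\bigr]\geq \frac{(2-\beta^2)\,n\,m_k}{6\beta K},
\]
which sums to $\gamma(\sigma;\sigma_0)\geq \frac{(2-\beta^2)nm}{6\beta K}$ for all $\beta<\sqrt2$ --- a clean per-community proof with a different (and, near $\beta=\sqrt{5/3}$, better) positive constant, which would suffice for Theorem \ref{thm:upper_beta} even though it does not reproduce the stated $c_\beta$. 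As written, however, your proposal carries out neither this route nor the paper's, and the final ``mechanical but tedious'' optimization it points to would return zero for $\beta\in[\sqrt{3/2},\sqrt{5/3})$.
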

\begin{proof}[Proof of Lemma \ref{lem::number_black_white_extend}]
It is sufficient to show the equality for $\gamma(\sigma;\sigma_0)$. First consider the case $m\leq\frac{n}{2\beta  K}$. Without loss of generosity, let $\sigma$ satisfy
\begin{align*}
\sigma(i)=k,\forall i\in\Bigg[\sum_{j=1}^{k-1}n'_{j}+1,\sum_{j=1}^{k}n'_{j}\Bigg].
\end{align*}
Here $\{n'_{k}\}$ are sizes of all communities in $\sigma$. Assume $d_H(\sigma,\sigma_0)=m$, then $m=|\{i:\sigma(i)\neq\sigma_0(i)\}|$. Define $m_k=|\{i:\sigma(i)=k,\sigma_0(i)\neq k\}|$ then $m=\sum_k m_k$. For $k\in[K]$, define
\begin{align*}
\gamma_{k}(\sigma;\sigma_0)&=|\{(i,j):\sigma(i)=\sigma(j)=k,\sigma_0(i)\neq \sigma_0(j),i<j\}| \\
&=\Big|\Big\{(i,j):\sigma_0(i)\neq \sigma_0(j),\sum_{j=1}^{k-1}n'_{j}+1\leq i<j\leq \sum_{j=1}^{k}n'_{j}\Big\}\Big|.
\end{align*}
We see that $\gamma(\sigma;\sigma_0)=\sum_{k=1}^{K}\gamma_{k}(\sigma;\sigma_0)$. We have $m_k\leq \frac{n}{2\beta K}\leq \frac{n'_k}{2}$, and also $\gamma_{k}(\sigma;\sigma_0)\geq |\{i:\sigma(i)=k, \sigma_0(i)=k\}||\{i:\sigma(i)=k, \sigma_0(i)\neq k\}|= m_k(n_k-m_k)$. Then
\begin{align*}
\gamma(\sigma;\sigma_0)\geq \sum_k m_k(n_k-m_k)\geq \frac{mn}{\beta K}-m^2.
\end{align*}
Now consider the case that $m>\frac{n}{2\beta K}$. Define $m_{k,k'}=|\{i:\sigma(i)=k,\sigma_0(i)= k'\}|$ for any $k,k'\in[K]$. We see that equations $m_k=\sum_{k'\neq k}m_{k,k'}$ and $n'_k=m_k+m_{k,k}$ and $n_{k'}=\sum_{k}m_{k,k'}$ hold for all $k,k'\in[K]$.

For each $k\in[K]$, we want to get the value of $\gamma_{k}(\sigma;\sigma_0)$. We divide $k\in[K]$ into the following three categories:

(1) We say $k\in\mathcal{K}_1$ if for all $k'\neq k$, $m_{k,k'}\leq \frac{2}{3}n'_k$. For a given $m_k$, we have
\begin{align*}
\frac{\gamma_{k}(\sigma;\sigma_0)}{n'_k m_k}=\frac{\frac{1}{2}(n_k^{'2}-\sum_{k'}m_{k,k'}^2)}{n'_k m_k},
\end{align*}
with $m_k=\sum_{k'\neq k}m_{k,k'}$. When $m_k\leq \frac{2}{3}n'_k$, it is easy to check
\begin{align*}
\frac{\gamma_{k}(\sigma;\sigma_0)}{n'_k m_k}\geq \frac{\frac{1}{2}(n_k^{'2}-(n'_k-m_k)^2-m_k^2)}{n'_k m_k}=\frac{n'_k-m_k}{n'_k}\geq \frac{1}{3}.
\end{align*}
When $m_k> \frac{2}{3}n'_k$,
\begin{align*}
\frac{\gamma_{k}(\sigma;\sigma_0)}{n'_k m_k}&\geq \frac{\frac{1}{2}(n_k^{'2}-(n'_k-m_k)^2-(m_k-\frac{2}{3}n'_k)^2-(\frac{2}{3}n'_k)^2)}{n'_k m_k}\\
&\geq \frac{m_k(n'_k-m_k)+\frac{2}{3}n'_k(m_k-\frac{2}{3}n'_k)}{n'_k m_k}\\
&\geq \frac{2}{9}.
\end{align*}
Thus $\gamma_{k}(\sigma;\sigma_0)\geq \frac{2nm_k}{9\beta K}$ in both cases.

(2) We say $k\in\mathcal{K}_2$ if exists $k'\neq k$ such that $m_{k,k'}>\frac{2}{3}n'_k$. Claim $m_{k',k'}>\frac{1}{3}n'_k$. Otherwise, from $\sigma$ we can exchange the labels $k$ and $k'$ to get a new estimator $\sigma'$. This helps to correctly recover at least $m_{k,k'}-m_{k,k}-m_{k',k'}>\frac{2}{3}n'_k-\frac{1}{3}n'_k-\frac{1}{3}n'_k>0$ more nodes. Since $\sigma'\in\Gamma(\sigma)$, this implies $m=d(\sigma_0,\sigma)\leq d_H(\sigma_0,\sigma')<d_H(\sigma_0,\sigma)=m$, which leads to a contradiction.

On the other hand, we have $m_{k'}=n'_{k'}-m_{k',k'}\geq n'_{k'}-(n_{k'}-m_{k,k'})\geq \frac{n}{\beta K}-\frac{\beta n}{K}+\frac{2n}{3\beta K}\geq \frac{(5-3\beta^2)n}{3\beta K}> 0$. This implies
\begin{align*}
\frac{\gamma_{k}(\sigma;\sigma_0)+\gamma_{k'}(\sigma;\sigma_0)}{m_k+m_{k'}}\geq \frac{\gamma_{k'}(\sigma;\sigma_0)}{m_k+m_{k'}}\geq \frac{m_{k',k'}m_{k'}}{m_k+m_{k'}}\geq \frac{\frac{1}{3}n'_k}{\frac{m_k}{m_{k'}}+1}\geq \frac{\frac{n}{3\beta K}}{\frac{\beta}{(5-3\beta)^2/(3\beta)}+1}.
\end{align*}
Thus we have $\gamma_{k}(\sigma;\sigma_0)+\gamma_{k'}(\sigma;\sigma_0)\geq \frac{2c_\beta n(m_k+m_{k'})}{K}\geq \frac{2c_\beta nm_k}{K}$.

Apparently $[K]=\mathcal{K}_1\cup \mathcal{K}_2$ and $\mathcal{K}_1\cap \mathcal{K}_2=\emptyset$. Claim for any $k\in\mathcal{K}_1$, there exists at most one $k'\neq k$ such that $m_{k',k}> \frac{2}{3}n'_{k'}$. Otherwise if there exists another $k''\neq k'$ such that $k''\neq k$ and $m_{k'',k}> \frac{2}{3}n'_{k''}$. Since $k',k''\in\mathcal{K}_2$, this leads to $m_{k,k}\geq \frac{1}{3}(n'_k\vee n'_{k'})$. Then $n_{k}\geq m_{k',k}+m_{k'',k}+m_{k,k}>n'_{k'}+\frac{2}{3}n'_{k''}\geq \frac{5n}{3\beta K}>\frac{\beta n}{K}$ which leads to a contradiction. Note that $c_\beta\leq \frac{2}{9}$. Thus
\begin{align*}
\gamma(\sigma;\sigma_0)&=\frac{1}{2}\sum_{k\in[K]}2\gamma_k(\sigma;\sigma_0)\\
&\geq\frac{1}{2}\left(\sum_{k\in\mathcal{K}_1}\frac{2nm_k}{9\beta K}+\sum_{k\in\mathcal{K}_2}\frac{2c_\beta nm_k}{K}\right)\\
&\geq \frac{c_\beta nm}{K}.
\end{align*}
\end{proof}

\end{document}